\newtheorem{thm}{Theorem}[section]
\newtheorem{lemma}[thm]{Lemma}
\newtheorem{prop}[thm]{Proposition}
\newcommand{\proofref}[1]{\noindent {\emph{Proof of Theorem} \ref{#1}.\ }}
\theoremstyle{definition}
\newtheorem{example}[thm]{Example}
\newtheorem{definition}[thm]{Definition}
\newtheorem{que}[thm]{Question}
\newtheorem{remark}[thm]{Remark}
\newcommand{\rrel}{\mathcal{R}}
\newcommand{\jrel}{\mathcal{J}}
\newcommand{\lrel}{\mathcal{L}}
\newcommand{\hrel}{\mathcal{H}}
\newcommand{\drel}{\mathcal{D}}
\newcommand{\nat}{\mathbb{N}}
\newcommand{\set}[2]{\ensuremath{\{\: #1 \: :\: #2 \:\}}}
\title{Ideals and finiteness conditions for subsemigroups}
\author{R. Gray\thanks{Supported by an an EPSRC Postdoctoral Fellowship, and partially supported by FCT and FEDER, project POCTI-ISFL-1-143 of Centro de \'{A}lgebra da Universidade de Lisboa, and by the project PTDC/MAT/69514/2006.}, V. Maltcev, J.D. Mitchell, N. Ru\v{s}kuc}
\begin{document}

\maketitle

\begin{abstract} 
In this paper we consider a number of finiteness conditions for semigroups related to their ideal structure, 
and ask whether 
such conditions are preserved by sub- or supersemigroups with finite Rees or Green index.
Specific properties under consideration include stability, $\drel=\jrel$ and minimal conditions on ideals.
\smallskip

\noindent
\textit{2010 Mathematics Subject Classification:} 20M05, 20M12.
\end{abstract}

\section{Introduction}\label{sec:intro}

Significant information about a semigroup may be obtained by studying its ideal structure and various finiteness conditions related to it. Examples include the existence of minimal ideals, stability and the property of Green's relations $\jrel$ and $\drel$ coinciding. Such properties have been identified and investigated because of their usefulness in the study of finite semigroups; see \cite[Appendix~A.2]{QTheoryBook}. 
This has led to instances where theorems that were originally proved for finite semigroups have been extended to apply to wider classes. 

Our main interest here is in the study of infinite semigroups satisfying such finiteness properties relating to their ideal structure.  
In this context, it is natural to ask, given a semigroup satisfying a certain property, to what extent
it can be changed while still continuing to satisfy the property. 
For example, an obvious basic question is whether the property in question is preserved under operations such as adjoining an identity element, or a zero element. 
Taking this one step further, one can consider this behaviour under finite changes in the number of elements. 
This leads to the notion of \emph{Rees index}. 
The Rees index of a subsemigroup $T$ of as semigroup $S$ is defined simply as the cardinality of the complement $S \setminus T$.     
Rees index was originally introduced and investigated by Jura \cite{Jura1,Jura2,Jura3}. 
Since then, the theory has been developed and extended considerably, with results about Rees index appearing in \cite{Cain2008, Hoffmann1, Kambites2005, Malheiro2, ruskuc1, Ruskuc&Thomas, Wang1,wong11}.   

Although natural, this notion is very restrictive, and as such limits the applicability of results about Rees index. 
For instance, it is not hard to see that an infinite group cannot have any proper subgroups of finite Rees index. 
Recently, in \cite{Gray}, a new approach was proposed, 
encompassing both Rees index and group-theoretic index, which is at the same time natural and
strong enough to enable one to prove results about preservation of finiteness conditions. 
A subsemigroup $T$ of a semigroup $S$ is said to have finite \emph{Green index} if it acts on its complement $S \setminus T$ in $S$ with finite quotient, in both of its natural actions via left and right multiplication (see below for a more detailed definition). 
The definition of Green index may also be given in terms of relative Green's relations, in the sense of \cite{Wallace1963}; see also \cite{Day} for a discussion of relative Green's relations in the context of the theory of topological semigroups.

Since Green index arises from the theory of relative ideals,  it is natural to consider the behaviour of finiteness properties relating to ideals  under taking finite Green index subsemigroups or extensions. This is our aim here. 
Specifically, after introducing Green index in Section \ref{sec2}, we consider the following finiteness 
conditions: stability (Section \ref{sect_stable}), $\jrel=\drel$ (Section \ref{secJD}),
having finitely many ideals (Section \ref{sect_finiteideals}), minimal conditions (Section \ref{secmin}),
all ideals having finite Rees index (Section \ref{secfinRees}), global torsion (Section \ref{secglobal}),
and eventual regularity (Section \ref{secpireg}).
In the process we resolve several open problems originally posed in \cite{ruskuc1} (specifically Open Problems 11.4, 11.3(i) and 11.3(ii)).
Our main results are summarised in Tables~\ref{table1} and \ref{table2}. Of course, each of these results will fail to hold if the finite index assumptions are lifted.

\begin{figure} \footnotesize
\begin{center}
\begin{tabular}{|l|l|c|c|} \hline
\multicolumn{2}{|c|}{       }& 
\parbox{28mm}{\ \\ \centering finite Green index subsemigroups } 
& 
\parbox{28mm}{\ \\ \centering \quad finite Green index extensions } 
\\
\hline\hline
$\jrel = \drel$ &  $T$ arbitrary & 
\parbox{25mm}{\ \\ \centering \ding{55} \\   (Example~\ref{ex_bigone}) \vspace{1mm}} & 
\parbox{25mm}{     \centering \checkmark \\ (Theorem \ref{J=D})} 
\\
\cline{2-4}
& $T$ regular& 
\parbox{25mm}{\ \\  \centering \checkmark\\ (Theorem \ref{th:first}) \vspace{1mm}} &
\parbox{25mm}{\centering \checkmark \\ (Theorem \ref{J=D}) }
\\
\hline
\multicolumn{2}{|l|}{\parbox{26mm}{\ \\ stability }}
& 
\parbox{25mm}{\ \\  \centering \checkmark\\  (Theorem \ref{stable}) \vspace{1mm}} & 
\parbox{25mm}{\centering \checkmark \\ (Theorem \ref{stable})}  
\\
\hline
\multicolumn{2}{|l|}{\parbox{26mm}{\ \\ finitely many ideals}}
& 
\parbox{25mm}{\ \\  \centering \checkmark\\  (Theorem \ref{finiteideals}) \vspace{1mm} } & 
\parbox{25mm}{\centering \checkmark\\  (Theorem \ref{finiteideals})}  
\\
\hline
\multicolumn{2}{|l|}{\parbox{26mm}{\ \\ $\mathrm{min}_R$}}
& 
\parbox{25mm}{\ \\ \centering \checkmark\\  (Theorem \ref{th:min_R}) \vspace{1mm} } & 
\parbox{25mm}{\centering \checkmark\\  (Theorem \ref{th:min_R})}  
\\
\hline
\multicolumn{2}{|l|}{\parbox{26mm}{\ \\ $\mathrm{min}_J$}}
& 
\parbox{25mm}{\ \\ \centering \checkmark\\  (Theorem \ref{th:min_J}) \vspace{1mm} } & 
\parbox{25mm}{\centering \checkmark\\  (Theorem \ref{th:min_J})}  
\\
\hline
\multicolumn{2}{|l|}{\parbox{26mm}{\ \\ global torsion }}
& 
\parbox{25mm}{\ \\ \centering \checkmark\\  (Theorem \ref{globalidem}) \vspace{1mm} } & 
\parbox{25mm}{\centering \checkmark\\  (Theorem \ref{globalidem})}  
\\
\hline
\multicolumn{2}{|l|}{\parbox{26mm}{\ \\ eventual\\ regularity }}
& 
\parbox{25mm}{\ \\ \centering \checkmark\\  (Theorem \ref{pireg}) \vspace{1mm} } & 
\parbox{25mm}{\centering \checkmark\\  (Theorem \ref{pireg})}  
\\
\hline
\end{tabular} 
\end{center}
\caption{Summary of Green index results.}
\label{table1}
\end{figure}

\begin{figure} \footnotesize
\begin{center}
\begin{tabular}{|l|l|c|c|} \hline
\multicolumn{2}{|c|}{       }& 
\parbox{28mm}{\ \\ \centering finite Rees index subsemigroups } 
& 
\parbox{28mm}{\ \\ \centering \quad finite Rees index extensions } 
\\
\hline\hline
$\jrel = \drel$ &  $S$ arbitrary & 
\parbox{25mm}{\ \\ \centering \ding{55} \\   (Example~\ref{ex_bigone}) \vspace{1mm}} & 
\parbox{25mm}{     \centering \checkmark \\ (Theorem \ref{J=D})} 
\\
\cline{2-4}
& $S$ regular& 
\parbox{25mm}{\ \\  \centering \checkmark\\ (Theorem \ref{th:second}) \vspace{1mm}} &
\parbox{25mm}{\centering \checkmark \\ (Theorem \ref{J=D})} 
\\
\hline
\multicolumn{2}{|l|}{\parbox{26mm}{\ \\ all ideals have finite Rees index}}
& 
\parbox{25mm}{\ \\  \centering \checkmark\\  (Theorem \ref{rees}) \vspace{1mm}} & 
\parbox{25mm}{\ \\ \centering \ding{55} \\ (Remark \ref{remark1})}  
\\
\hline
\end{tabular} 
\end{center}
\caption{Summary of Rees index results.} 
\label{table2}
\end{figure}

\section{Green's relations, relative relations and index}
\label{sec2}

Classical Green's relations are a cornerstone of semigroup theory; their definition can be found in every semigroup monograph, such as \cite{Howie1} or \cite{QTheoryBook}.
They may be viewed as capturing the orbit structure with respect to the actions of a semigroup $S$ on itself by left- and right multiplication.
Relative Green's relations, introduced by Wallace \cite{Wallace1963}, arise by considering the analogous orbit structure  with respect to the action of a subsemigroup rather than the entire semigroup.

More specifically, let $S$ be a semigroup, and let $T$ be a subsemigroup of $S$.
Denote by $S^1$ the semigroup obtained from $S$ by adjoining an identity element.
The five \emph{relative Green's relations} on $S$ with respect to $T$ are defined as follows:
\begin{gather*}
u \rrel^T v  \Leftrightarrow uT^1 = vT^1, 
\quad 
u \lrel^T v \Leftrightarrow T^1u = T^1v,
\quad
u \jrel^T v \Leftrightarrow  T^1uT^1 = T^1vT^1, \\
\hrel^T = \rrel^T \cap \lrel^T, 
\quad
\drel^T = \rrel^T\circ \lrel^T = \lrel^T \circ \rrel^T. 
\end{gather*}
Each of these relations is an equivalence relation on $S$; 
 the (relative) equivalence classes of an element $u\in S$ will be denoted by
$R_u^T$, $L_u^T$, $J_u^T$, $H_u^T$ and $D_u^T$ respectively.
Furthermore, each of these relations respects $T$, in the sense that every
relative class lies wholly
in $T$ or wholly in $S \setminus T$. 

The following result summarises some basic facts about relative Green's relations
(see \cite[Proposition~4]{Gray} for details).

\begin{prop}\label{basicproperties}
Let $S$ be a semigroup and let $T$ be a subsemigroup of $S$.

\begin{enumerate}[label=\textup{(\roman*)}, leftmargin=*,widest=iii]
\item
 $\rrel^T$ is a left congruence on $S$, and $\lrel^T$ is
a right congruence.
\item
For each relative $\hrel^T$-class $H$ either $H^2 \cap H =
\emptyset$, or $H^2 \cap H = H$, in which case $H$ is a subgroup of
$S$.
\item
Let $u,v \in S$  be such that $u \rrel^T v$, and let $p, q \in T^1$
such that $up = v$ and $vq = u$. Then the mapping $\rho_p$ given by
$x \mapsto xp$ is an $\rrel^T$-class preserving bijection from
$L_u^T$ to $L_v^T$, while the mapping $\rho_q$ given by $x \mapsto
xq$ is an $\rrel^T$-class preserving bijection from $L_v^T$ to
$L_u^T$, and is the inverse of the mapping $\rho_p$.
\end{enumerate}
\end{prop}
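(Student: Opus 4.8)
My plan is to follow the classical template for Green's relations---the left/right congruence property, Green's Lemma, and the subgroup theorem for $\hrel$-classes---adapting each step to the relative setting, where the multipliers witnessing $\rrel^T$ and $\lrel^T$ are constrained to lie in $T^1$ rather than in all of $S^1$. Logically I would prove (i), then (iii) (the relative Green's Lemma), and finally (ii), which rests on (iii) and its left--right dual. Part (i) is immediate from the set-theoretic definitions: if $u \rrel^T v$, so that $uT^1 = vT^1$, then for any $s \in S$ we have $suT^1 = s(uT^1) = s(vT^1) = svT^1$, whence $su \rrel^T sv$; thus $\rrel^T$ is a left congruence, and the statement for $\lrel^T$ follows dually.

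For (iii) I would verify three things in turn. First, $\rho_p$ sends $L_u^T$ into $L_v^T$: right-multiplying the equality $T^1 x = T^1 u$ by $p$ gives $T^1 xp = T^1 up = T^1 v$. Second, $\rho_p$ and $\rho_q$ are mutually inverse: any $x \in L_u^T$ has the form $x = tu$ with $t \in T^1$, and since $up = v$ and $vq = u$ give $upq = u$, we obtain $xpq = t(upq) = tu = x$ (and symmetrically on $L_v^T$). Third, $\rho_p$ preserves $\rrel^T$-classes: $xpT^1 \subseteq xT^1$ is trivial, while $x = xpq \in xpT^1$ forces $xT^1 \subseteq xpT^1$, so $x \rrel^T xp$. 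Being an $\rrel^T$-class-preserving bijection onto $L_v^T$, $\rho_p$ then carries each $\rrel^T$-class of $L_u^T$ onto the corresponding class in $L_v^T$.

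Part (ii) is the main event, and it is where the single genuine obstacle appears. Assume $H^2 \cap H \neq \emptyset$, say $a, b \in H$ with $ab \in H$. Classically one would translate by $b$ and invoke Green's Lemma, but $b$ need not lie in $T^1$, so (iii) does not apply to $\rho_b$ directly. The device that resolves this is to internalize the multiplier: since $ab \rrel^T a$ one may pick $p, q \in T^1$ with $ap = ab$ and $abq = a$, and then for every $x \in L_a^T$, writing $x = ta$ with $t \in T^1$ yields $xb = tab = tap = xp$. Hence $\rho_b$ agrees with $\rho_p$ on $L_a^T$, and by (iii) this is an $\rrel^T$-class-preserving bijection $L_a^T \to L_{ab}^T$ restricting to a bijection $H \to H$ (as $ab \in H$ gives $H_{ab}^T = H$); dually, left multiplication by $a$ is a bijection $H \to H$. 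From $Hb = H$ I would extract $f \in H$ with $fb = b$ and show $fh = h$ for all $h \in H$ (writing $h = bt$, $t \in T^1$), so that $f$ is an idempotent left identity; dually $aH = H$ gives an idempotent right identity $e$, and $e = fe = f$ is a two-sided identity. Re-running the internalization device on the pair $(e,y)$ for arbitrary $y \in H$ (where $ey = y \in H$) shows $\rho_y : H \to H$ is a bijection for every $y$, hence $HH \subseteq H$; surjectivity of $\rho_y$ and $\lambda_y$ then produces two-sided inverses, so $H$ is a subgroup and $H^2 \cap H = H$.

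The crux throughout---and the step I expect to take the most care---is precisely this internalization of an arbitrary multiplier into $T^1$ along an $\lrel^T$- or $\rrel^T$-class. Once it is available the classical arguments transcribe almost verbatim, and the left--right dual of (iii) needed in (ii) follows from the evident symmetry of the definitions. Notably, the argument never uses that a relative class lies wholly in $T$ or wholly in $S \setminus T$; the possibility that $b \notin T^1$ is absorbed entirely by the internalization step.
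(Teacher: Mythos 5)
Your proof is correct, and it is essentially the standard argument: parts (i) and (iii) are the routine set-theoretic computations, and the crux of (ii) --- that $b$ need not lie in $T^1$, fixed by choosing $p\in T^1$ with $ap=ab$ and observing that $\rho_b$ and $\rho_p$ agree on $L_a^T$ (so the relative Green's Lemma (iii) applies) --- is exactly the adaptation the relative setting requires. Note that the paper itself gives no proof, deferring to Proposition~4 of \cite{Gray}, and your argument matches the classical template used there.
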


Following \cite{Gray}, we define the \emph{Green index} of $T$
in $S$ to be one more than the number of $\hrel^T$-classes in $S \setminus T$.
Thus, $T$ has \emph{finite Green index} in $S$ if there are only finitely many $\hrel^T$-classes in $S\setminus T$, or, equivalently, if $S\setminus T$ contains only finitely many 
$\rrel^T$- and $\lrel^T$-classes.
From this it is obvious that a subsemigroup with finite Rees index must also have finite Green index.
If $S$ is a group, and $T$ a subgroup, the relative $\rrel^T$- and $\lrel^T$-classes are precisely the left- and right cosets of $T$. Thus, for subgroups of groups, finite Green index coincides with the usual meaning of finite index.

Classical \emph{Green's relations} on $S$ are obtained by setting $T=S$ in the above.
They and the corresponding equivalence classes are normally written without superscripts, e.g. $\rrel$ and $R_u$. 
However,
since in this paper important roles will be played by both Green's equivalences and their relative versions, a peculiar notational difficulty arises. Given a semigroup $S$, a subsemigroup $T$, 
and 
$\mathcal{G}\in\{ \rrel,\lrel,\hrel,\drel,\jrel\}$,
there are three versions of $\mathcal{G}$: the `full' relation on $S$, the `full' relation on $T$, and the relative
relation $\mathcal{G}^T$ on $S$. In order to resolve this formally we would need to introduce another super- or subscript, to denote the domain of the relation in question. We have adopted a slightly more informal approach:
whenever $\mathcal{G}$ appears in the text (and there is a possibility of confusion) we will always specify its domain in words (e.g. $\mathcal{G}$ on $T$, or $\mathcal{G}^T$ on $S$); the occurrences of $\mathcal{G}$ in mathematical expressions will always be accompanied by the appropriate superscript $S$ or $T$, indicating from which set the relevant multiplying elements are drawn, while the actual domain of the relation in such a situation is always possible to determine from the context.

Associated with Green's equivalences $\rrel$, $\lrel$ and $\jrel$ on $S$ are
three natural preorders $\leq_\rrel$, $\leq_\lrel$, and $\leq_\jrel$ on $S$ given by
\[
u \leq_\rrel v \ \text{ if } \ u S^1 \subseteq v S^1,
\quad 
u \leq_\lrel v \ \text{ if } \ S^1 u \subseteq S^1 v,\quad 
u \leq_\jrel v \ \text{ if } \ S^1 u S^1 \subseteq S^1 v S^1.
\]
These preorders induce, in the natural way, partial orders on the set $S / \rrel$, $S / \lrel$  and $S / \jrel$, of of $\rrel$-, $\lrel$- and $\jrel$-classes respectively. 
These will all be simply denoted by $\leq$, and which one is meant will be clear from the context.

\section{Stability}\label{sect_stable}

Stable semigroups (originally introduced in \cite{Koch1957}) are important because they are precisely those semigroups for which the Rees--Sushkevich Theorem gives a coordinatization for each $\jrel$-class. 
Stability is also a useful tool for proving that a semigroup satisfies the finiteness condition $\jrel=\drel$. 
In particular, finite, torsion, or compact Hausdorff topological semigroups are all stable. Important results regarding stability include \cite{Birget1988, OCarroll1969}, and more recently \cite{Elston2002}. For more background on stable semigroups see \cite[Appendix A.2]{QTheoryBook}.

We recall the following
definition from \cite[Proposition 3.7]{lallement}.

\begin{definition}\label{defnstable}
A $\jrel$-class $J$ of a semigroup $S$ is said to be \emph{right
stable} if it satisfies one (and hence all) of the following
equivalent conditions:
\begin{enumerate}
\item[\rm (i)] the set of all $\rrel$-classes in $J$ has a minimal element with respect to $\leq_\rrel$;
\item[\rm (ii)] there exists $q\in J$ satisfying the following property:
 $q\jrel qx$ if and only if $q\rrel qx$ for all $x\in S$;
\item[\rm (iii)] every $q\in J$ satisfies the property stated in (ii);
\item[\rm (iv)] every $\rrel$-class in $J$ is minimal under $\leq_\rrel$ in the set of $\rrel$-classes in $J$.
\end{enumerate}
We say that the whole semigroup $S$ is \emph{right stable} if every
$\jrel$-class of $S$ is right stable. The notion of left stability
is defined dually. A $\jrel$-class or a semigroup are said to be
(\emph{two-sided}) \emph{stable} if they are both left and right
stable.
\end{definition}

The main theorem of this section is:

\begin{thm}\label{stable}
 Let $S$ be a semigroup and let $T$ be a subsemigroup of $S$ with finite Green index. Then
$T$  is (right, left or two-sided) stable if and only if $S$ is (right, left, or two-sided respectively) stable.
\end{thm}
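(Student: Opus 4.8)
The plan is to prove the `right' version of the statement; the `left' version is dual, and the two-sided version is the conjunction of the two. Throughout I use the reformulation of right stability coming from the equivalence of conditions (i) and (iv) in Definition \ref{defnstable}: a semigroup is right stable precisely when, inside every $\jrel$-class, no two distinct $\rrel$-classes are comparable under $\leq_\rrel$; equivalently, $a\leq_\rrel b$ together with $a\jrel b$ forces $a\rrel b$. I will apply this both to the full relations on $S$ and to the full relations on $T$, the latter being, by the `respects $T$' property recorded after Proposition \ref{basicproperties}, exactly the restrictions to $T$ of the relative relations $\rrel^T$ and $\jrel^T$. The engine of the argument is a handful of finiteness facts that follow from finite Green index by the methods of \cite{Gray}: each $\rrel^S$-class is a union of finitely many $\rrel^T$-classes (and dually for $\lrel$), each $\jrel^S$-class meets $T$ in only finitely many $\jrel^T$-classes, and, since $\rrel^T\subseteq\rrel^S$ and $S\setminus T$ is covered by finitely many $\rrel^T$-classes, only finitely many $\rrel^S$-classes meet $S\setminus T$. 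I also record the elementary containments $\jrel^T|_T\subseteq\jrel^S$ and $\mathord{\leq_{\rrel^T}}\subseteq\mathord{\leq_{\rrel^S}}$, obtained by viewing multipliers from $T^1$ as elements of $S^1$.

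For the implication \emph{$S$ right stable $\Rightarrow$ $T$ right stable}, I argue by contradiction. If $T$ is not right stable then some $\jrel^T$-class inside $T$ has no $\leq_{\rrel^T}$-minimal $\rrel^T$-class, so there is an infinite strictly descending chain $b_0>_{\rrel^T}b_1>_{\rrel^T}\cdots$ of $\rrel^T$-classes inside one $\jrel^T$-class. The containments above place all the $b_i$ in a single $\jrel^S$-class with $b_{i+1}\leq_{\rrel^S}b_i$. Right stability of $S$ then forces $b_{i+1}\rrel^S b_i$ for all $i$, so the $b_i$ lie in one $\rrel^S$-class; but that class contains only finitely many $\rrel^T$-classes, contradicting the choice of the $b_i$ in distinct $\rrel^T$-classes.

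For the reverse implication \emph{$T$ right stable $\Rightarrow$ $S$ right stable}, I verify that every $\jrel^S$-class $J$ has a $\leq_{\rrel^S}$-minimal $\rrel^S$-class. If $J\subseteq S\setminus T$ this is immediate, since $J$ then meets only finitely many $\rrel^S$-classes. If $J$ meets $T$ and had no minimal $\rrel^S$-class, I would obtain an infinite strictly $\leq_{\rrel^S}$-descending chain of $\rrel^S$-classes in $J$; discarding the finitely many that meet $S\setminus T$, and then using that $J\cap T$ is a finite union of $\jrel^T$-classes, I may assume (after passing to a subchain) that all the chosen representatives lie in $T$, in a single $\jrel^T$-class, and in distinct $\rrel^S$-classes. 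Taking two consecutive representatives $a\leq_{\rrel^S}b$ then contradicts the key lemma below.

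The main obstacle is this key lemma: \emph{if $T$ is right stable and has finite Green index, and $a,b\in T$ satisfy $a\leq_{\rrel^S}b$ and $a\jrel^T b$, then $a\rrel^S b$.} The difficulty is that $a\leq_{\rrel^S}b$ only gives $a=bs$ with $s\in S^1$, and in general $s\notin T^1$, so one cannot directly invoke right stability of $T$, which controls only $T^1$-multipliers. The plan is to use the finitely many $\rrel^T$- and $\lrel^T$-classes of the complement $S\setminus T$ to replace the external multiplier $s$ by $T$-data: by tracking how right multiplication by $s$ moves $b$ through this finite set of relative classes and applying the $\rrel^T$-class preserving bijections of Proposition \ref{basicproperties}(iii), a pigeonhole argument should produce a $T^1$-multiplier witnessing $a\leq_{\rrel^T}b$, whence right stability of $T$ gives $a\rrel^T b$ and hence $a\rrel^S b$. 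Making this replacement precise, and in particular controlling the interaction of $s$ with the complement, is where essentially all of the work lies.
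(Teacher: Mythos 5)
Your forward direction ($S$ stable $\Rightarrow$ $T$ stable) is sound and genuinely different from the paper's: the paper proves this half by a direct computation with powers (the ($\Leftarrow$) part of its proof), whereas you reduce it to the fact that each $\rrel^S$-class contains only finitely many $\rrel^T$-classes. That fact is true (it can be proved by a telescoping-and-pigeonhole argument, and it is the local statement underlying the right/left-ideal theorem of \cite{Gray} quoted at the start of Section~\ref{sect_finiteideals}), so this half stands and is arguably cleaner than the paper's argument.

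The other direction is where your proposal breaks down, in two places. First, you invoke ``each $\jrel^S$-class meets $T$ in only finitely many $\jrel^T$-classes'' as a known finiteness fact available ``by the methods of \cite{Gray}''. It is not: this is precisely what the hard direction of Theorem~\ref{finiteideals} of the present paper establishes, via a Ramsey-plus-pigeonhole argument, and that theorem resolves an open problem of \cite{ruskuc1}; the reference \cite{Gray} contains only the one-sided ($\rrel$ and $\lrel$) analogues. Treating this as a citable black box leaves a substantial unproved step (though not a circular one, since Theorem~\ref{finiteideals} does not depend on Theorem~\ref{stable}).

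Second, and fatally, your key lemma is left unproved, and the strategy you sketch for it cannot succeed. You propose to upgrade $a\leq_{\rrel^S}b$ (with $a\jrel^T b$ and $T$ right stable) to $a\leq_{\rrel^T}b$ and then apply stability of $T$. But the required $T^1$-multiplier need not exist. Counterexample: in $S=\mathcal{T}_3$, the full transformation monoid on $\{1,2,3\}$ acting on the right (write $f=[1f,2f,3f]$), take $b=[1,1,2]$, $a=[2,2,1]$, $u=[3,3,1]$, and $T=\{a,b,u,u^2,[1,1,1],[2,2,2],[3,3,3]\}$, which one checks is a subsemigroup. Then $ub=a$ and $ua=b$, so $a\jrel^T b$ (indeed $a\lrel^T b$); $T$ is finite, hence right stable, and has finite Rees (so Green) index in $S$; and $bs=a$ for $s=[2,1,3]$, so $a\leq_{\rrel^S}b$. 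Yet $bt=[1t,1t,2t]$ for every $t\in T^1$, and no element of $T^1$ sends $1\mapsto 2$ and $2\mapsto 1$, so $a\notin bT^1$: the multiplier you aim to manufacture simply does not exist, even though the lemma's conclusion $a\rrel^S b$ does hold here. So the key lemma (which is the real content of this direction, and is true) must be proved by producing the $\rrel^S$-relation directly, not by descending to $\rrel^T$. That is exactly what the paper does: it reformulates stability via Lemma~\ref{easy} ($R_a\leq R_{ba}\Rightarrow R_a=R_{ba}$), and then, starting from $a=b^iax^i$, uses Lemma~\ref{step1} together with pigeonhole on the finitely many relative classes of $S\setminus T$ applied to the powers $ax^i$ and $b^i$ to exhibit an element of $S^1$ (not of $T^1$) carrying $a$ onto $ba$.
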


Clearly a semigroup $S$ is left (right) stable if and only if the semigroup $S^1$ is left (right) stable. Hence, without loss of generality, throughout this section we will assume that $S$ has an identity $1$ and that $1 \in T$.  

We will need two technical lemmas.

\begin{lemma}[{\cite[Proposition 3.10]{lallement}}]\label{easy}
Let $S$ be a semigroup. Then $S$ is right stable if and only if
$R_a\leq R_{ba}$ implies $R_a=R_{ba}$ for all $a,b\in S$.
\end{lemma}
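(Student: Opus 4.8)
The plan is to prove the two implications separately, in both cases working with characterization (iv) of Definition~\ref{defnstable}: a $\jrel$-class $J$ is right stable exactly when every $\rrel$-class contained in $J$ is minimal under $\leq_\rrel$ among the $\rrel$-classes of $J$. Throughout I will use two elementary monotonicity facts, namely that $\leq_\rrel$ refines $\leq_\jrel$, and that $ba\leq_\jrel a$ for all $a,b$ (since $ba\in S^1a$). Their first joint consequence is that the hypothesis $R_a\leq R_{ba}$ is never as weak as it looks: from $a\leq_\rrel ba$ we get $a\leq_\jrel ba$, while always $ba\leq_\jrel a$, so $a$ and $ba$ are automatically forced into a common $\jrel$-class.

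For the forward implication, assume $S$ is right stable and take $a,b$ with $R_a\leq R_{ba}$. By the observation above $a\jrel ba$, so $R_a$ and $R_{ba}$ are $\rrel$-classes of one and the same $\jrel$-class $J$. Right stability of $J$ in the form (iv) says $R_{ba}$ is minimal in $J$; since $R_a\leq R_{ba}$ this forces $R_a=R_{ba}$, as required. This direction is essentially immediate once the $\jrel$-equality is noted.

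For the converse, assume the stated implication and fix a $\jrel$-class $J$; I will verify (iv). So let $R_u,R_v$ be $\rrel$-classes in $J$ with $R_u\leq R_v$, and aim to show $R_u=R_v$. From $u\leq_\rrel v$ write $u=vs$ with $s\in S^1$, and from $u\jrel v$ (so $v\leq_\jrel u$) write $v=\gamma u\delta$ with $\gamma,\delta\in S^1$. The crucial manoeuvre is to collapse these into a purely one-sided relation: substituting gives $u=vs=\gamma u(\delta s)$, whence $u\leq_\rrel \gamma u$. Now $\gamma u$ is a left multiple of $u$, so this is exactly an instance of $R_a\leq R_{ba}$ with $a=u$ and $b=\gamma$, and the hypothesis yields $R_u=R_{\gamma u}$, i.e.\ $\gamma u=u\lambda$ for some $\lambda\in S^1$. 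Feeding this back into $v=\gamma u\delta=u(\lambda\delta)$ shows $v\leq_\rrel u$; together with $u\leq_\rrel v$ this gives $u\,\rrel\,v$, so $R_u=R_v$ and (iv) holds.

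I expect the only real obstacle to lie in the converse: the hypothesis is phrased with a left multiplication ($ba$) but an $\rrel$-comparison (a right-ideal inclusion), so it cannot be applied directly to the two-sided equation $v=\gamma u\delta$ produced by $u\jrel v$. The key insight is that feeding the right-multiple equation $u=vs$ into this two-sided equation collapses it to the one-sided form $u\leq_\rrel\gamma u$ needed to invoke the hypothesis; everything after that is routine substitution. One should only double-check the boundary cases in which some of $s,\gamma,\delta$ equal the adjoined identity, but these cause no difficulty.
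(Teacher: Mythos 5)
Your proof is correct. Note that the paper itself offers no proof of this lemma: it is quoted directly from Lallement's book (\cite[Proposition 3.10]{lallement}), so there is no in-paper argument to compare against. Your derivation from characterization (iv) of Definition~\ref{defnstable} is sound in both directions: the forward implication is, as you say, immediate once one observes $a \jrel ba$ (from $a \leq_\rrel ba$ together with $ba \leq_\jrel a$), and in the converse the substitution $u = vs = \gamma u(\delta s)$ correctly collapses the two-sided relation coming from $u \jrel v$ into the one-sided comparison $R_u \leq R_{\gamma u}$, to which the hypothesis applies; the case $\gamma = 1$ is indeed trivial since then $R_u = R_{\gamma u}$ outright. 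This is a complete and self-contained proof of the cited result.
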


\begin{lemma}\label{step1}
Let $S$ be a semigroup, let $T$ be a right stable subsemigroup of
$S$ with finite Green index, and let $a,x\in S$ such that $(ax^i,
ax^j)\not\in \rrel^{T}$ for all $i\not= j$. Then there exists
$N\in \nat$ such that $x^i\in T$ and $(ax^i, ax^{2i})\not\in
\jrel^T$ for all $i\geq N$.
\end{lemma}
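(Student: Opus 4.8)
The plan is to split the statement into its two parts: first that the powers $x^i$ eventually lie in $T$, which I will get from finite Green index, and then that $ax^i$ and $ax^{2i}$ are $\jrel^T$-inequivalent, which I will extract from the right stability of $T$. The key preliminary observation is that the hypothesis on the elements $ax^i$ can be transferred to the bare powers $x^i$. Indeed, by Proposition \ref{basicproperties}(i) the relation $\rrel^T$ is a left congruence on $S$, so $x^i \rrel^T x^j$ would force $ax^i \rrel^T ax^j$; contrapositively, the assumption $(ax^i,ax^j)\notin\rrel^T$ for $i\neq j$ yields $(x^i,x^j)\notin\rrel^T$ for all $i\neq j$. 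Hence both sequences $(x^i)_{i\ge 1}$ and $(ax^i)_{i\ge 1}$ meet pairwise distinct $\rrel^T$-classes.

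Next I would invoke finite Green index. Since $S\setminus T$ contains only finitely many $\rrel^T$-classes and every $\rrel^T$-class lies wholly inside or wholly outside $T$, only finitely many of the $x^i$, and only finitely many of the $ax^i$, can lie in $S\setminus T$. Choosing $N$ greater than both thresholds gives $x^i\in T$ and $ax^i\in T$ for all $i\ge N$; as $ax^{2i}=(ax^i)x^i$, also $ax^{2i}\in T$. This already settles the first assertion, that $x^i\in T$ for $i\ge N$.

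For the second assertion I would work inside $T$, using that for elements of $T$ the relations $\rrel^T$ and $\jrel^T$ coincide with Green's relations of the semigroup $T$ itself (here $T^1=T$, since $1\in T$). Fix $i\ge N$ and apply right stability of $T$ in the form of condition (ii) of Definition \ref{defnstable}, which by its equivalence with (iii) holds at every element of $T$: taking $q=ax^i\in T$ and $y=x^i\in T$ we have $qy=ax^{2i}$, so $ax^i\jrel^T ax^{2i}$ if and only if $ax^i\rrel^T ax^{2i}$. The right-hand side fails, since $(ax^i,ax^{2i})\notin\rrel^T$ because $i\neq 2i$, and therefore $(ax^i,ax^{2i})\notin\jrel^T$, as required.

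I expect the delicate point to be this final step. One must be sure that the relative relations genuinely agree with the intrinsic Green's relations on $T$, and, more importantly, that it is the right-multiplicative form of stability (condition (ii), with $q$ multiplied on the right by $y=x^i$) that converts the known $\rrel^T$-inequivalence into $\jrel^T$-inequivalence; the left-sided version would not do the job. The left-congruence step, although routine, is also essential, since without it one controls only the elements $ax^i$ and not the bare powers $x^i$ occurring in the conclusion.
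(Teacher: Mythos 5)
Your proof is correct and follows essentially the same route as the paper's: transfer the $\rrel^T$-inequivalence from $ax^i$ to $x^i$ via the left-congruence property, use finiteness of the number of $\rrel^T$-classes in $S\setminus T$ to get $x^i, ax^i\in T$ for large $i$, and then invoke right stability of $T$. The only (immaterial) difference is that you apply condition (ii)/(iii) of Definition~\ref{defnstable} directly with $q=ax^i$ and multiplier $x^i$, whereas the paper notes the proper inclusion of right ideals $ax^iT \supsetneq ax^{2i}T$ and applies the equivalent condition (iv).
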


\begin{proof}
Since $(ax^i, ax^j)\not\in \rrel^{T}$ and $\rrel^T$ is a left congruence it follows that $(x^i,x^j)\not\in\rrel^T$ for all $i\neq j$.
As there are only finitely many $\rrel^T$-classes in $S\setminus T$ it follows that
there exists $N\in \nat$ such that $x^i,ax^i\in T$ for all $i\geq N$.
Hence the right ideal $ax^{i}T$ of $T$ properly contains the right ideal $ax^{2i}T$ for all $i\geq N$.
 It follows that $R_{ax^{i}}^T>R_{ax^{2i}}^T$.
Since $T$ is right stable, and recalling Definition~\ref{defnstable}(iv), $ax^{i}$ and $ax^{2i}$ lie in distinct $\jrel$-classes of $T$. That is,
$(ax^i, ax^{2i})\not\in \jrel^T$
 for all $i\geq N$.
\end{proof}

\proofref{stable}
We prove the theorem for right stability; the proof
for left stability is dual, and for the two-sided follows from these two.

($\Rightarrow$)
Suppose that $T$ is right stable.
It suffices, by Lemma \ref{easy}, to prove that if $R_a^{S}\leq R_{ba}^S$, then $R_a^{S}= R_{ba}^S$ for all $a,b\in S$. 
So suppose we have $a,b,x\in S$ such that
\begin{equation*}
a=bax=b^iax^i \ \ (i \in \nat).
\end{equation*}
We start by proving that there exist $i,j\in\nat$ such that $i< j$
and $(ax^i, ax^j)\in \rrel^T$. 
Seeking a contradiction, assume to the
contrary that $(ax^i, ax^j)\not\in \rrel^T$ for all $i\not=j$.
It follows from Lemma \ref{step1} that there exists $N\in\nat$ such
that $(ax^i, ax^{2i})\not\in\jrel^T$ and $x^i\in T$ for all $i\geq N$. 
From this and
$$
ax^i=b^i\cdot a x^{2i}\cdot 1,\ \ ax^{2i}=1\cdot ax^i\cdot x^i,
$$
we deduce that $b^{i}\in S\setminus T$ for all $i\geq N$. 
Since $T$ has finite Green index,
there exist $m,n\in\nat$ such that $m-n, n\geq N$ and $(b^m, b^n)\in \lrel^T$, 
and so there exists $t\in T$
such that $b^m=tb^n$. Hence 
$$
a=b^max^m=tb^nax^m=t\cdot a
x^{m-n}\cdot 1,\ \ ax^{m-n}=1\cdot a\cdot x^{m-n},
$$ 
and so
$(a, ax^{m-n})\in \jrel^{T}$. Similarly,
$$
a=t^2\cdot ax^{2(m-n)}\cdot1,\ \ ax^{2(m-n)}=1\cdot a\cdot x^{2(m-n)}
$$
implies that $(a, ax^{2(m-n)})\in \jrel^{T}.$ Therefore
$(ax^{m-n}, ax^{2(m-n)})\in \jrel^T$, a contradiction as $m-n\geq
N$.

So, we have shown that there exist $i<j$ such that $(ax^i, ax^j)\in\rrel^T$. In particular, there exists $u\in T$ such that $ax^i=ax^j u$. It follows that
$$ba=b^{i+1}ax^i=b^{i+1}ax^ju=ax^{j-i-1}u.$$
Thus from the assumption that $R_{a}^S\leq  R_{ba}^S$  we obtain $(ba,a)\in \rrel^{S}$. That is, $R^{S}_{ba}=R^{S}_{a}$, as required.\vspace{\baselineskip}

($\Leftarrow$) 
Suppose now that $S$ is right stable.
We prove
that  $R_a^{T}\leq R_{ba}^T$ implies $R_a^{T}= R_{ba}^T$ for all
$a,b\in T$.  Let $a,b,x\in T$ be such that
$a=bax=b^kax^k$. Since $R_a^S\leq R_{ba}^S$ and $S$ is right stable,
it follows that $R_a^S=R_{ba}^S$. Hence there exists $y\in S$ such
that $ba=ay$ (and so $b^ka=ay^k$ for all $k\geq 1$).
Now,
\begin{equation}\label{1star}
ba=b^{k+1}ax^k=ay^{k+1}x^k.
\end{equation}

If $y^{k+1}x^k\in T$ for some $k\geq 1$, then $ba\in a T$ by \eqref{1star}.
Hence $R_a^T=R_{ba}^T$ and the proof is complete.

On the other hand suppose that $y^{k+1}x^k\in S\setminus T$ for all $k\geq 1$. Then $y^k\in S\setminus T$ for all $k\geq 2$ (as $x\in T$). Then,
since $T$ has finite Green index, there exist $m\geq 2$ and $n\geq 1$ such that
$(y^{m+n}, y^m)\in \lrel^T$. Hence
 there exists $t\in T$ such that $y^{m+n}=ty^m$. Then for all $k\geq 1$ we have that
\begin{equation}\label{2star}
t^ky^mx^{m+kn-1}=y^{m+kn}x^{m+kn-1}\in S\setminus T.
\end{equation}
It follows that $y^mx^{m+kn-1}\in S\setminus T$ for all $k\geq 1$ (as $t\in T$). Hence,
again since $T$ has finite Green index,
 there exist $u,v\in \nat$ such that $v>u+1$ and $(y^mx^{m+un-1},  y^mx^{m+vn-1})\in \rrel^T$, and so there exists $t_0\in T$ where
\begin{equation}\label{3star}
y^mx^{m+un-1}= y^mx^{m+vn-1}t_0.
\end{equation}
To conclude, we have
\begin{eqnarray*}
ba&=& ay^{m+un}x^{m+un-1}
=at^uy^mx^{m+un-1} 
=at^uy^mx^{m+vn-1}t_0\\
&=&at^uy^mx^{m+un-1}\cdot x^{(v-u)n}t_0
=ba\cdot x^{(v-u)n}t_0
=bax\cdot x^{(v-u)n-1}t_0\\
&=&ax^{(v-u)n-1}t_0\in aT,
\end{eqnarray*}
where \eqref{1star}, \eqref{2star}, \eqref{3star} have been used in the first three steps above.
Thus $R_{a}^T=R_{ba}^T$, as required.
\qed

\section{\boldmath The Property $\jrel=\drel$}
\label{secJD}

Many natural classes of semigroups have the property that the relations $\jrel$ and $\drel$ coincide. For instance, this is the case for the full transformation monoid of all maps from a set to itself, for the monoid of all linear transformations on a vector space, and also every stable (and in particular every finite) semigroup.

Given a semigroup $S$ and subsemigroup $T$ of finite Rees index, it was asked in \cite[Open Problem 11.4]{ruskuc1} whether it is true that the relations $\jrel$ and $\drel$ coincide in $S$ if and only if they coincide in $T$. In this section we will show that this problem has a positive solution in one direction, when passing from $T$ to $S$, even under the weaker assumption of finite Green index. On the other hand, rather surprisingly, we will see that the converse does not hold, by exhibiting a semigroup $S$ and subsemigroup $T$ such that $|S \setminus T|=1$, where the relations $\jrel$ and $\drel$ coincide in $S$ but do not coincide in $T$. However, we will see that by placing regularity assumptions on $S$ or $T$, respectively, positive results in this direction may be recovered.   

We being by establishing the following. 

\begin{thm}\label{J=D}
Let $S$ be a semigroup, and let $T$ be a subsemigroup of
$S$ with finite Green index. If $\jrel=\drel$ in $T$, then
$\jrel=\drel$ in $S$.
\end{thm}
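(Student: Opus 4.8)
Since $\drel^S\subseteq\jrel^S$ holds in every semigroup, only the reverse inclusion needs proof: the goal is to show that $a\jrel^S b$ implies $a\drel^S b$ for all $a,b\in S$. As in Section~\ref{sect_stable}, adjoining an identity affects neither hypothesis nor conclusion, so I assume throughout that $S$ is a monoid with $1\in T$. Fixing $a\jrel^S b$, I first unpack the relation into witnesses $b=xay$ and $a=ubv$ with $x,y,u,v\in S$, whence $a=gah=g^{n}ah^{n}$ for all $n\in\nat$ where $g=ux$ and $h=yv$, and also $b=(xg^{n})\,a\,(h^{n}y)$. The plan is to transport this $\jrel^S$-relation down to elements lying inside $T$, apply the hypothesis $\jrel=\drel$ on $T$ there to cross between an $\rrel^T$- and an $\lrel^T$-class, and then lift the resulting witness back up to the full relations on $S$.

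The descent into $T$ is driven by finite Green index exactly as in Lemma~\ref{step1}. Since $\rrel^T$ is a left and $\lrel^T$ a right congruence (Proposition~\ref{basicproperties}(i)) and there are only finitely many $\rrel^T$- and $\lrel^T$-classes in $S\setminus T$, a pigeonhole argument applied to the one-sided orbits $(ah^{n})_n$ and $(g^{n}a)_n$ yields, on each side, one of two outcomes: either two terms are relatively related, or all but finitely many terms lie in $T$; in the latter case one obtains elements $a',b'\in T$ with $a'\jrel^S a$ and $b'\jrel^S b$. The subtle point is that to invoke the hypothesis I need the \emph{relative} relation $a'\jrel^T b'$ rather than merely $a'\jrel^S b'$, so a further finite-index argument is required to replace the connecting $S^{1}$-multipliers by $T^{1}$-multipliers (again a congruence-plus-pigeonhole step in the style of the proof of Theorem~\ref{stable}). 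Once $a'\jrel^T b'$ with $a',b'\in T$ is in hand, their relative classes lie wholly in $T$, so $\jrel^T$ and $\drel^T$ among them coincide with Green's relations on $T$; the hypothesis then produces $c\in T$ with $a'\rrel^T c\lrel^T b'$.

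It remains to lift this to $S$. Relative equalities imply full ones since $T^{1}\subseteq S^{1}$, giving $a'\rrel^S c\lrel^S b'$; the task is then to propagate these along the stabilising factors $g^{n},h^{n}$ and the original multipliers $x,y,u,v$ so as to manufacture a genuine element of $R_{a}^S\cap L_{b}^S$, the natural tool being the $\rrel^T$-class-preserving translation bijections of Proposition~\ref{basicproperties}(iii). I expect the main obstacle to lie precisely in these last two steps. First, because finite Green index bounds only the number of relative classes \emph{outside} $T$, the pigeonhole cannot be applied to the orbits directly: one must argue that the absence of an early relative coincidence forces cofinitely many orbit terms into $T$, and then reconcile the two cases and the left/right sides uniformly. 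Second, and more delicately, the bridging element $c$ witnesses only a $\drel$-relation between the truncated elements $a',b'$, which are related to $a,b$ by $\jrel$ rather than by $\rrel$ or $\lrel$; converting this into an actual common element of $R_{a}^S$ and $L_{b}^S$ requires careful two-sided bookkeeping, combining the left- and right-hand coincidences into a single rectangular witness. This gluing, rather than any individual inequality, is where the bulk of the computation will reside.
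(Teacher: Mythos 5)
Your proposal contains a genuine gap, and it sits exactly where you predict it will: the ``lift'' step. Your plan is to descend from $a,b$ to elements $a',b'\in T$ (such as $a'=ah^n$, $b'=xg^nah^n y$ when the orbits land in $T$), upgrade $a'\jrel^S b'$ to $a'\jrel^T b'$, apply the hypothesis to get $c\in T$ with $a'\rrel^T c\lrel^T b'$, and then transport this back to $a$ and $b$. But the only connection your descent provides between $a$ and $a'$ is a $\jrel^S$-relation (concretely, crossed one-sided inequalities $a'\leq_\rrel a$ and $a\leq_\lrel a'$, since $a'=ah^n$ and $a=g^na'$); without stability these do not give $a\drel^S a'$. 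So the final step requires transporting a $\drel^S$-relation along a $\jrel^S$-connection --- which is precisely an instance of the theorem being proved. The lift is circular, not merely ``careful two-sided bookkeeping,'' and no amount of appeal to the translation bijections of Proposition~\ref{basicproperties}(iii) fixes it, because those bijections require genuine $\rrel^T$-relations to start from.

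The paper's proof is structured specifically to avoid ever needing such a lift. It works with the set $Q_{a,b}$ of quadruples of multipliers witnessing $a\jrel^S b$, and the hypothesis $\jrel=\drel$ in $T$ is invoked in exactly one situation (inside Lemma~\ref{lemma2}): when a quadruple lies entirely in $T$ \emph{and} $a,b$ themselves lie in $T$, so that $a\jrel^T b$ holds for the original elements and $a\drel^T b\subseteq\drel^S$ follows with nothing to transport. In every other configuration --- multipliers in $T$ but $a,b\in S\setminus T$ (second half of Lemma~\ref{lemma2}), or multipliers escaping $S\setminus T$ infinitely often (Lemma~\ref{lemma1} and the final case of the main proof) --- finite Green index plus pigeonhole is applied not to descended copies of $a$ and $b$ but to their one-sided translates (e.g.\ $(x_2x_1)^kb$ or $x_1(x_2x_1)^k$), producing genuine relations $b\lrel^S x_1b$, $b\rrel^S by_1$, $a\lrel^S x_2a$ between $a$, $b$ and elements such as $by_1$; these then assemble into $a\drel^S b$ directly, with no use of the hypothesis at all. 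The part of your sketch where escaping orbits yield one-sided relations is sound and corresponds to Lemma~\ref{lemma1}, but you treat that as the obstruction case and the landing case as the good one, whereas the actual difficulty is distributed the other way around; the mixed cases (one side lands, the other escapes) also need the intermediate reductions of Lemmas~\ref{lemma3} and~\ref{lemma4}, which your plan to ``reconcile the two cases uniformly'' does not supply.
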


In order to prove Theorem~\ref{J=D} we need some preparation. Let
$S$ be a semigroup and $T$ be a subsemigroup of $S$ with finite
Green index such that $\jrel=\drel$ in $T$. 
Note that
$\jrel=\drel$ in $S$ if and only if $\jrel=\drel$ in $S^1$. 
Hence, as in the previous section, throughout this section we
assume without of loss of generality that $S$ has an identity $1$
and that $1\in T$. 
For any pair $a,b\in S$ with $(a,b)\in\jrel^S$ define
$$
Q_{a,b}=\set{(x_1,x_2, y_1, y_2)\in S\times S\times S\times S}{a=x_1by_1 \textrm{ and } b=x_2ay_2}.
$$
Note that
\begin{equation}\label{4star}
\begin{split}
&(x_1,x_2, y_1, y_2)\in Q_{a,b}\Rightarrow\\
&(x_1(x_2x_1)^k, x_2, y_1(y_2y_1)^k, y_2),
(x_1, x_2(x_1x_2)^k, y_1, y_2(y_1y_2)^k)\in Q_{a,b}.
\end{split}
\end{equation}

\begin{lemma}\label{lemma1}
Let $a,b\in S$ such that $(a,b)\in\jrel^S$ and let $(x_1,x_2, y_1, y_2)\in Q_{a,b}$. Then:
\begin{enumerate}[label=\textup{(\roman*)}, leftmargin=*,widest=ii]
\item[\rm (i)]
if the set
$$
\{k\in \nat:x_1(x_2x_1)^k\in S\setminus T \textrm{ or }
x_2(x_1x_2)^k\in S\setminus T\}
$$ 
is infinite, then $(b, x_1b),(a, x_2a)\in \lrel^S$;
\item[\rm (ii)]
if the set 
$$
\{k\in \nat:y_1(y_2y_1)^k\in S\setminus T \textrm{ or }
y_2(y_1y_2)^k\in S\setminus T\}
$$ 
is infinite, then $(b, by_1),(a, ay_2)\in \rrel^S$.
\end{enumerate}
\end{lemma}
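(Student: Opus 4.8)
The plan is to prove part (i) directly and to obtain part (ii) as its left--right dual, since reversing multiplication in $S$ interchanges the roles of the $x_i$ and the $y_i$, of $\lrel^S$ and $\rrel^S$, and of the two displayed sets; so I focus on (i). Throughout I will use the ``periodicity'' consequences of $(x_1,x_2,y_1,y_2)\in Q_{a,b}$: substituting $b=x_2ay_2$ into $a=x_1by_1$ (and symmetrically) and iterating gives
\[
a=(x_1x_2)^k a (y_2y_1)^k,\qquad b=(x_2x_1)^k b (y_1y_2)^k\qquad(k\in\nat),
\]
together with the single-step forms $b=x_2x_1b\,y_1y_2$ and $a=x_1x_2a\,y_2y_1$. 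I will also use the elementary identities $w_kx_2=x_1v_k=(x_1x_2)^{k+1}$ and $v_kx_1=x_2w_k=(x_2x_1)^{k+1}$, where $w_k=x_1(x_2x_1)^k$ and $v_k=x_2(x_1x_2)^k$ denote the two families appearing in the hypothesis.

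First I would extract a single clean $\lrel^S$-relation. Since the displayed set is infinite, either $w_k\in S\setminus T$ for infinitely many $k$ or $v_k\in S\setminus T$ for infinitely many $k$; say the former. As $S\setminus T$ meets only finitely many $\lrel^T$-classes (finite Green index), the pigeonhole principle yields $k_1<k_2$ with $w_{k_1}\,\lrel^T\,w_{k_2}$, hence $t\in T^1$ with $w_{k_1}=tw_{k_2}$. Right-multiplying this by $b(y_1y_2)^{k_2}$ and collapsing both sides with the periodicity identity $b=(x_2x_1)^{k}b(y_1y_2)^{k}$ gives
\[
x_1b\,(y_1y_2)^{m}=t\,x_1b,\qquad m=k_2-k_1\ge 1 .
\]

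The key step, and the one I expect to be the main obstacle, is turning this relation---which still carries a trailing $(y_1y_2)$-power and so only records a $\jrel$-type coincidence---into a genuine left-ideal inclusion. The device is to reabsorb the trailing factor: using $b=x_2x_1b\,y_1y_2$ one gets $b(y_1y_2)^{m-1}=x_2x_1b(y_1y_2)^{m}=x_2\big(x_1b(y_1y_2)^m\big)=x_2t\,x_1b$, whence the periodicity identity $b=(x_2x_1)^{m-1}b(y_1y_2)^{m-1}$ yields $b=(x_2x_1)^{m-1}x_2t\,x_1b\in S^1x_1b$. Since $x_1b\in S^1b$ is automatic, this gives $(b,x_1b)\in\lrel^S$.

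For the companion relation $(a,x_2a)\in\lrel^S$ the natural route is the mirror-image computation on the $x_2$-leading family. If infinitely many $v_k$ lie outside $T$ this is immediate by symmetry: one obtains $x_2a\,(y_2y_1)^{m'}=s\,x_2a$ with $s\in T^1$ and then, reabsorbing via $a=x_1x_2a\,y_2y_1$, the inclusion $a\in S^1x_2a$. The subtle case is when only the $w_k$ escape $T$ infinitely often; here I would exploit the coupling identities $x_2w_k=v_kx_1$ and $x_1v_k=w_kx_2$ to transport the $\lrel^T$-coincidence among the $w_k$ onto the $x_2$-leading side. Arranging this transport so that it produces exactly a relation of the shape $x_2a\,(y_2y_1)^{m'}=s\,x_2a$---rather than one collapsing only to a two-sided coincidence---is the crux and the step requiring the most care; by contrast, the finite-Green-index hypothesis itself enters only through the elementary pigeonhole on $\lrel^T$-classes.
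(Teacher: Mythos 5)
Your strategy is the paper's own: prove (ii) as the left--right dual of (i); within (i) reduce, via the symmetry $(a,b,x_1,x_2,y_1,y_2)\mapsto(b,a,x_2,x_1,y_2,y_1)$, to the case where $w_k=x_1(x_2x_1)^k$ lies in $S\setminus T$ for infinitely many $k$; pigeonhole the $w_k$ into the finitely many $\lrel^T$-classes meeting $S\setminus T$ to obtain $w_{k_1}=tw_{k_2}$ with $t\in T^1$ and $k_1<k_2$; then collapse trailing factors using the periodicity identities $b=(x_2x_1)^kb(y_1y_2)^k$ and $a=(x_1x_2)^ka(y_2y_1)^k$. Your derivation of $(b,x_1b)\in\lrel^S$ along these lines is correct and is essentially the computation in the paper.

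The gap is that you never actually prove the other half of (i), namely $(a,x_2a)\in\lrel^S$, in the one case that must be handled in full (infinitely many $w_k$ outside $T$, with no assumption on the $v_k$): you name the coupling identity $w_kx_2=(x_1x_2)^{k+1}$ as the intended device but stop there, declaring the ``transport'' an unresolved crux. In fact it is a one-line step, and there is no need to manufacture a relation of the shape $x_2a(y_2y_1)^{m'}=sx_2a$ at all. Right-multiplying $w_{k_1}=tw_{k_2}$ by $x_2$ gives $(x_1x_2)^{k_1+1}=t(x_1x_2)^{k_2+1}$; multiplying this on the right by $a(y_2y_1)^{k_1+1}$ and collapsing with the periodicity identity for $a$ yields
\begin{equation*}
a=(x_1x_2)^{k_1+1}a(y_2y_1)^{k_1+1}=t(x_1x_2)^{k_2+1}a(y_2y_1)^{k_1+1}=t(x_1x_2)^{k_2-k_1}a ,
\end{equation*}
and since $(x_1x_2)^{k_2-k_1}=x_1(x_2x_1)^{k_2-k_1-1}x_2$ (read as $x_2$ when $k_2-k_1=1$), this exhibits $a\in S^1x_2a$; together with the trivial $x_2a\in S^1a$ this gives $(a,x_2a)\in\lrel^S$. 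This is exactly how the paper concludes: both target relations are extracted in parallel from the single pigeonhole relation among the $w_k$, using $x_2w_k=(x_2x_1)^{k+1}$ on one side and $w_kx_2=(x_1x_2)^{k+1}$ on the other. So your proposal is one short computation away from being complete, but as written it establishes only half of the conclusion of (i).
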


\begin{proof}
It suffices to prove (i) assuming $x_1(x_2x_1)^k\in S\setminus T$ for infinitely many $k$.
Because $T$ has finite Green index in $S$, we have
$$(x_1(x_2x_1)^k, x_1(x_2x_1)^{k+r})\in \lrel^T$$ 
for some $k,r>0$,
and so
there exists $t\in T$ such that $x_1(x_2x_1)^k=t x_1(x_2x_1)^{k+r}$.
Hence
$$b=(x_2x_1)^{k+1}b(y_1y_2)^{k+1}=x_2tx_1(x_2x_1)^{k+r}b(y_1y_2)^{k+1}=x_2tx_1(x_2x_1)^{r-1}b$$
and so $(b, x_1b)\in\lrel^S$.
Also
$$a=(x_1x_2)^{k+1}a(y_2y_1)^{k+1}=tx_1(x_2x_1)^{k+r}x_2a (y_2y_1)^{k+1}=t(x_1x_2)^r a$$
and so $(a, x_2a)\in \lrel^S$.
\end{proof}


\begin{lemma}\label{lemma2}
Let $a,b\in S$ be such that $(a,b)\in\jrel^S$. If there exists  $(x_1, x_2, y_1, y_2)\in Q_{a,b}$ with $x_1, x_2, y_1, y_2\in T$,  then $(a,b)\in \drel^S$.
\end{lemma}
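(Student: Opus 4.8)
The plan is to exhibit an explicit witness for $a\,\drel^S\,b$, namely $c=ay_2$, by proving $a\,\rrel^S\,ay_2$ and $ay_2\,\lrel^S\,b$. First I would record the basic periodicity coming from the quadruple: writing $p=x_1x_2\in T$ and $q=y_2y_1\in T$, the defining equations give $a=x_1by_1=x_1(x_2ay_2)y_1=paq$, and hence $a=p^kaq^k$ for every $k\in\nat$. The argument then splits according to whether or not $a\in T$.

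If $a\in T$, then since $x_2,y_2\in T$ we also have $b=x_2ay_2\in T$, and the equations $a=x_1by_1$, $b=x_2ay_2$ give $T^1aT^1=T^1bT^1$, i.e. $a$ and $b$ are $\jrel$-related in $T$. The hypothesis $\jrel=\drel$ in $T$ then yields $c\in T$ with $a\,\rrel\,c\,\lrel\,b$ in $T$; since $T^1\subseteq S^1$, the equalities $aT^1=cT^1$ and $T^1c=T^1b$ upgrade to $aS^1=cS^1$ and $S^1c=S^1b$, so $a\,\drel^S\,b$. This is the step that consumes the standing assumption of the section.

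The substantive case is $a\notin T$. Here I would first observe that $a\notin T$ forces $aq^k\notin T$ and $p^ka\notin T$ for all $k$: otherwise $a=p^k(aq^k)$ or $a=(p^ka)q^k$ would exhibit $a$ as a product of two elements of $T$. Finite Green index gives only finitely many $\rrel^T$-classes outside $T$, so by pigeonhole two of the infinitely many $aq^k$ are $\rrel^T$-related, say $aq^k\,\rrel^T\,aq^{k+r}$; this gives $aq^k\,\rrel^S\,aq^{k+r}$, and applying the left congruence $\rrel^S$ (Proposition~\ref{basicproperties}(i)) on the left by $p^k$, together with $a=p^kaq^k$, yields $a\,\rrel^S\,aq^r$. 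Since $a\geq_\rrel ay_2\geq_\rrel ay_2y_1=aq\geq_\rrel\cdots\geq_\rrel aq^r$ with the two ends $\rrel^S$-equal, the whole chain collapses and $a\,\rrel^S\,ay_2$. The strictly dual computation, using that $\lrel^S$ is a right congruence applied to the elements $p^ka$, gives $a\,\lrel^S\,x_2a$, and right-multiplying by $y_2$ gives $ay_2\,\lrel^S\,x_2ay_2=b$. Combining, $a\,\rrel^S\,ay_2\,\lrel^S\,b$, so $a\,\drel^S\,b$.

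The main obstacle is this collapsing step: turning a single repetition $aq^k\,\rrel^T\,aq^{k+r}$, produced only by finiteness of the index, into the genuine relation $a\,\rrel^S\,ay_2$ in $S$. It hinges on exploiting that $\rrel^S$ is a left congruence to cancel the prefix $p^k$ via $a=p^kaq^k$, and then on the elementary fact that a $\leq_\rrel$-chain with $\rrel^S$-equal endpoints must be constant. I would take care to keep the $\rrel^S$ and $\lrel^S$ halves exactly dual, and to verify at the outset that $a\notin T$ really does eject \emph{every} power $aq^k$ and $p^ka$ from $T$, since that is what licenses the pigeonhole on a single chain.
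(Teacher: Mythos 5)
Your proof is correct and follows essentially the same route as the paper's: split on whether $a$ (equivalently $b$) lies in $T$, dispose of the inside case via $\jrel=\drel$ in $T$, and in the outside case pigeonhole the products that are forced to remain in $S\setminus T$ and use the congruence properties of $\rrel^S$ and $\lrel^S$ to produce an explicit middle element. The only differences are cosmetic: the paper centres its computation on $b$ and uses the middle element $x_1b$ (where you centre on $a$ and use $ay_2$), and it finishes with the direct identity $b=t(x_2x_1)^nb$ where you instead invoke antisymmetry of the induced order on $\rrel^S$-classes to collapse the chain.
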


\begin{proof}
From $(x_1,x_2,y_1,y_2)\in Q_{a,b}$ and $x_1,x_2,y_1,y_2\in T$ it follows that
$a\in T$ if and only if $b\in T$.
If $a,b\in T$
we have that $(a,b)\in \jrel^T$
and so $(a, b)\in\drel^T$ by assumption. Thus $(a, b)\in \drel^S$,
as required.

Consider now the case where $a,b\in S\setminus T$. It will suffice to prove that
$(b, x_1b)\in \lrel^{S}$ and $(x_1b, a)\in \rrel^S$. 
Since
$b=(x_2x_1)^kb (y_1y_2)^k$, $b \not\in T$ and $y_1, y_2 \in T$,
we have that $(x_2x_1)^kb\in S\setminus
T$ for all $k\geq 1$. 
Finite Green index implies that there exist $m,n\geq 1$ and $t\in T$
with $(x_2x_1)^mb=t(x_2x_1)^{m+n}b$. 
Then
$$b=(x_2x_1)^mb(y_1y_2)^m=t(x_2x_1)^{m+n}b(y_1y_2)^m=t(x_2x_1)^nb$$
and so $(b,x_1b)\in \lrel^S$.
Analogously, $(b, by_1)\in \rrel^{S}$ and since $\rrel^{S}$ is a
left congruence, 
$$(a, x_1b)=(x_1by_1, x_1b)\in \rrel^S,
$$ 
completing the proof.
\end{proof}

\begin{lemma}\label{lemma3}
Let $a,b\in S$ such that $(a,b)\in\jrel^S$. If $(x_1, 1, y_1,
y_2)\in Q_{a,b}$ with $y_1, y_2\in T$, then $(a,b)\in \drel^S$.
\end{lemma}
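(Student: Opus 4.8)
The plan is to exploit the special shape of the hypothesis: since $x_2=1$ we have $b=ay_2$, so writing $y=y_2y_1\in T$ the two defining equations collapse to $a=x_1ay$ (hence $a=x_1^kay^k$ for all $k$) together with $by_1=ay$. Everything will turn on the behaviour of the powers of the single ``external'' multiplier $x_1$, so I would split the argument according to whether any power of $x_1$ lies in $T$.

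If $x_1^m\in T$ for some $m\ge1$, I would simply return to \eqref{4star}: its first transformation with $k=m-1$ shows $(x_1^m,1,y_1(y_2y_1)^{m-1},y_2)\in Q_{a,b}$, and now all four coordinates lie in $T$. Lemma~\ref{lemma2} then gives $(a,b)\in\drel^S$ at once.

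The substantive case is $x_1^m\in S\setminus T$ for every $m\ge1$; this is the main obstacle, because the relation $a=x_1ay$ carries a multiplier outside $T$, so neither Lemma~\ref{lemma2} nor the hypothesis on $T$ is available until that multiplier is removed. The key device is finite Green index applied to the sequence $(x_1^m)$, which lives entirely in $S\setminus T$: two of its terms must be $\lrel^T$-related, and exactly as in the proof of Lemma~\ref{lemma1}(i) this produces $t_0\in T$ and $n\ge1$ with $x_1^{m+n}=t_0x_1^m$. Feeding this into $a=x_1^{m+n}ay^{m+n}$ yields $a=t_0ay^n$ with $t_0\in T$, and iterating gives $a=t_0^j\,ay^{jn}$ for all $j\ge1$.

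It then remains to analyse the sequence $(ay^{jn})$. If some $ay^{jn}\in T$, then $a=t_0^j\cdot ay^{jn}$ is a product of two elements of $T$, forcing $a\in T$ and hence $b=ay_2\in T$; I would then verify $(a,b)\in\jrel^T$, using $b=ay_2$ for $b\le_{\jrel^T}a$ and the rewriting $a=t_0ay^n=t_0(by_1)y^{n-1}=t_0b(y_1y^{n-1})\in T^1bT^1$ for the reverse inequality, so that $\jrel=\drel$ in $T$ delivers $(a,b)\in\drel^T\subseteq\drel^S$. If instead $ay^{jn}\in S\setminus T$ for all $j$, then finite Green index forces $ay^{jn}\rrel^T ay^{j'n}$, hence $ay^{jn}\rrel^S ay^{j'n}$, for some $j<j'$; writing $ay^{jn}=ay^{j'n}t'$ and substituting into $a=t_0^j\,ay^{jn}$ makes the powers telescope to $a=ay^{(j'-j)n}t'\in ayS^1$, so $a\rrel^S ay$. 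Since $ay=by_1\le_\rrel b\le_\rrel a$, the inclusions $aS^1=ayS^1\subseteq bS^1\subseteq aS^1$ collapse to $a\rrel^S b$, which is even stronger than the required $(a,b)\in\drel^S$. Thus the whole difficulty is concentrated in taming the external multiplier $x_1$ via finite index, after which a single fork, decided by whether right multiplication by $y$ re-enters $T$, dispatches the remaining cases through $\jrel=\drel$ in $T$ and through a second finite-index collapse respectively.
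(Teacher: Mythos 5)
Your proof is correct, and its skeleton is the same as the paper's: split on whether some power of $x_1$ lies in $T$ (handling that case via \eqref{4star} and Lemma \ref{lemma2}), and otherwise use finite Green index on the powers $x_1^m\in S\setminus T$ to produce $t_0\in T$ and $n\geq 1$ with $a=t_0ay^n$, where $y=y_2y_1$. The only divergence comes after this point, and it is a redundancy rather than a genuinely different idea: the identity $a=t_0ay^n=t_0\cdot b\cdot y_1(y_2y_1)^{n-1}$, together with $b=1\cdot a\cdot y_2$, already exhibits the quadruple $(t_0,1,y_1(y_2y_1)^{n-1},y_2)\in Q_{a,b}$ with all four entries in $T$ (recall $1\in T$ by the standing assumption of this section), so Lemma \ref{lemma2} finishes the proof at once --- this is exactly what the paper does. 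Your additional fork on whether some $ay^{jn}$ lies in $T$ is therefore unnecessary: your Subcase 2a simply re-proves the easy half of Lemma \ref{lemma2} inline, and your Subcase 2b's second finite-index collapse (yielding $a\,\rrel^S\,b$ outright via the telescoping $a=t_0^jay^{jn}=t_0^jay^{j'n}t'=ay^{(j'-j)n}t'$) is valid but superfluous. Both branches check out, so nothing is wrong; you just missed that the multiplier-removal step already lands you squarely in the hypothesis of Lemma \ref{lemma2}.
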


\begin{proof}
As $(x_1, 1, y_1, y_2)\in Q_{a,b}$, we have that $a=x_1by_1$ and
$b=ay_2$. 
By \eqref{4star} we have
that $(x_1^k, 1, y_1(y_2y_1)^{k-1}, y_2)\in Q_{a,b}$ for all $k\geq 1$. 
Hence
if  there exists $k\in \nat$ such that $x_1^k\in T$, then
$(a,b)\in\drel^S$ by Lemma \ref{lemma2}.

Thus we may assume that $x_1^k\in S\setminus T$ for all $k\geq 1$.
Finite Green index of $T$ in $S$ implies that there exist $m,n\geq 1$ such that
$x_1^{m+n}=tx_1^m$ for some $t\in T$. 
Hence, since $x_1^m b (y_1 y_2)^m = b$, we have
$$
a=x_1^{m+n}b(y_1y_2)^{m+n-1}y_1=tx_1^{m}b(y_1y_2)^{m+n-1}y_1=t\cdot b\cdot (y_1y_2)^{n-1}y_1.
$$
It follows that $(t, 1, (y_1y_2)^{n-1}y_1, y_2)\in Q_{a,b}$, and, since all the entries are in $T$, the result follows
by Lemma \ref{lemma2}.
\end{proof}

The following lemma provides the crucial step in the proof of
Theorem \ref{J=D}.

\begin{lemma}\label{lemma4}
Let $a,b\in S$ such that $(a,b)\in\jrel^S$. If $(x_1, x_2, y_1,
y_2)\in Q_{a,b}$ with $y_1, y_2\in T$, then $(a,b)\in \drel^{S}$.
\end{lemma}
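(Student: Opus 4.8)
The plan is to apply the dichotomy of Lemma~\ref{lemma1}(i) to the given tuple $(x_1,x_2,y_1,y_2)\in Q_{a,b}$, and in each case to reduce to one of the two results already in hand: Lemma~\ref{lemma2}, which handles tuples with all four coordinates in $T$, and Lemma~\ref{lemma3}, which handles the case $x_2=1$. Accordingly, I would split on whether the set
\[
K=\set{k\in\nat}{x_1(x_2x_1)^k\in S\setminus T \ \text{or}\ x_2(x_1x_2)^k\in S\setminus T}
\]
is finite or infinite.

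Suppose first that $K$ is finite, so that there is some $N$ with $x_1(x_2x_1)^k\in T$ and $x_2(x_1x_2)^k\in T$ for all $k\geq N$. Here I would invoke the closure property \eqref{4star} twice. Applying its first clause with $k=N$ replaces the tuple by $(X_1,x_2,Y_1,y_2)\in Q_{a,b}$, where $X_1=x_1(x_2x_1)^N\in T$ and $Y_1=y_1(y_2y_1)^N\in T$. Applying its second clause to this new tuple, and using the identity $X_1x_2=(x_1x_2)^{N+1}$, turns the second coordinate into $x_2(x_1x_2)^{(N+1)l}$, which lies in $T$ for $l\geq 1$, while the fourth coordinate $y_2(Y_1y_2)^l$ is automatically in $T$. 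This produces a tuple in $Q_{a,b}$ with all four coordinates in $T$, and Lemma~\ref{lemma2} gives $(a,b)\in\drel^S$.

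Now suppose $K$ is infinite. Then Lemma~\ref{lemma1}(i) yields $(a,x_2a)\in\lrel^S$, so $a=px_2a$ for some $p\in S^1$. The idea is to pass to the intermediate element $b'=ay_2$. Since $b'=pb$ and $b=x_2b'$, we obtain $(b',b)\in\lrel^S$; and the equalities $a=(x_1x_2)b'y_1$ and $b'=ay_2$ show both that $(a,b')\in\jrel^S$ and that $(x_1x_2,1,y_1,y_2)\in Q_{a,b'}$ with $y_1,y_2\in T$. Hence Lemma~\ref{lemma3} applies to the pair $(a,b')$ and gives $(a,b')\in\drel^S$. Since $\lrel^S\subseteq\drel^S$ and $\drel^S$ is an equivalence relation, $(a,b)\in\drel^S$ follows.

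The routine parts (the pumping identities and the bookkeeping of membership in $T$) should cause no trouble. I expect the main obstacle to be structural rather than computational: recognising that the $K$-dichotomy is the right way to organise the argument, that in the finite case the two separate pumping moves of \eqref{4star} can be composed to force all four coordinates into $T$ simultaneously, and --- the most delicate point --- that in the infinite case the correct reduction is via the intermediate element $b'=ay_2$, which is precisely engineered so that the $x_2=1$ hypothesis of Lemma~\ref{lemma3} holds for the pair $(a,b')$.
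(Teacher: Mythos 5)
Your proof is correct and takes essentially the same route as the paper: the identical dichotomy on the set $K$, with the finite case reduced to Lemma~\ref{lemma2} by pumping via \eqref{4star}, and the infinite case reduced to Lemma~\ref{lemma3} combined with Lemma~\ref{lemma1}(i). The only (cosmetic) difference is that in the infinite case the paper's intermediate element is $x_2a$, applying Lemma~\ref{lemma3} to the pair $(x_2a,b)$ and then using $(x_2a,a)\in\lrel^S$, whereas you use the mirror-image reduction via $b'=ay_2$.
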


\begin{proof}
There are two cases to consider: 
\begin{enumerate}[label=\textup{(\arabic*)}, leftmargin=*,widest=0]
\item there exists $N\in \nat$ such that $x_1(x_2x_1)^k,x_2(x_1x_2)^k\in T$ for all $k\geq N$; and
\item $x_1(x_2x_1)^k\in S\setminus T$ or $x_2(x_1x_2)^k\in S\setminus T$ for infinitely many $k$.
\end{enumerate}
In Case (1), the quadruple
$$(x_1(x_2x_1)^N, x_2(x_1x_2)^N, y_1(y_2y_1)^N, y_2(y_1y_2)^N)$$ lies in $Q_{a,b}$ and all of its entries are in $T$. Hence the result follows by Lemma \ref{lemma2}.

To prove the lemma in Case (2), note that $x_2a=x_2x_1\cdot b\cdot
y_1$ and $b=1\cdot x_2a\cdot y_2$. This implies that $(x_2a, b)\in
\jrel^S$ and  $(x_2x_1, 1, y_1, y_2)\in Q_{x_2a, b}$.  So, by Lemma
\ref{lemma3}, $(x_2a, b)\in \drel^S$. By the assumption of Case (2)
it follows from Lemma \ref{lemma1}(i) that $(x_2a, a)\in\lrel^S$.
Therefore $(a,b)\in\drel^S$.
\end{proof}


We can now use Lemmas \ref{lemma1} and \ref{lemma4} to prove Theorem
\ref{J=D}. \vspace{\baselineskip}


\begin{proof}[Proof of Theorem~\ref{J=D}]
Let $a,b\in S$ such that $(a,b)\in \jrel^{S}$.
Then by Lemma \ref{lemma4} (and its dual), if  there exists $(x_1, x_2, y_1,
y_2)\in Q_{a,b}$ with either $x_1, x_2\in T$ or $y_1, y_2\in T$,
then the proof is complete.

If neither of these conditions hold,  then for all $(x_1, x_2, y_1,
y_2)\in Q_{a,b}$ and  for all  $k\in \nat$ we have 
\[
\bigl(x_1(x_2x_1)^k\in S\setminus T \mbox{ or }
x_2(x_1x_2)^k\in S\setminus T\bigr) \mbox{ and }
\bigl( y_1(y_2y_1)^k\in S\setminus T \mbox{ or }
y_2(y_1y_2)^k\in S\setminus T\bigr)
\]
by \eqref{4star}.  Therefore by
Lemma \ref{lemma1}, $(b, x_1b)\in \lrel^S$ and $(b, by_1)\in
\rrel^S$. Thus $(a, by_1)=(x_1b\cdot y_1, b\cdot y_1)\in \lrel^S$ and so
$(a, b)\in \drel^S$.
\end{proof}

The property $\jrel=\drel$ is not inherited the other way round,
from $S$ to $T$, even when $T$ has finite Rees index, as the following
example shows. 

\begin{example}
\label{ex_bigone}

\renewcommand{\arraystretch}{1.4}

\begin{figure}[t]
{\footnotesize
\begin{eqnarray*}
&
\begin{array}{c|ccccc}
     & a & b & c & d & x \\ \hline\hline
x   &  xa  & xb   &  0  & 0   & x^2   \\ 
x^2   & a   &  x^2b  & 0   & 0   & x  \\ 
a   &  0  & 0   & ac   & ad   & 0  \\ 
ac^i   &  0  & 0   &  ac^{i+1}  & ac^{i-1}   & 0  \\ 
ad^j   &  0  & 0   &  ad^{i-1}  & ad^{i+1}   & 0  \\ 
c^i d^j & 0 & 0 & c^{i+1} d^j & c^i d^{j+1} & 0 \\
xa   &  0  & 0   & xac   & xad   & 0  \\ 
xac^i   &  0  & 0   &  xac^{i+1}  & xac^{i-1}   & 0  \\ 
xad^j   &  0  & 0   &  xad^{i-1}  & xad^{i+1}   & 0  \\ 
x^j (bx)^i b^k x^l   &  
\mbox{see below}
  &    
\mbox{see below}
 & 0   &  0  &  x^j (bx)^i b^k x^{l}x \\ 
\end{array}
&
\\
&
x^j (bx)^i b^k x^l\cdot a =\left\{ 
\begin{array}{ll}
ac^i, & \mbox{if } k=l,\ j=0,2\\
xac^i, & \mbox{if } k=l,\ j=1 \\
0, & \mbox{otherwise}
\end{array}\right. 
\quad
x^j (bx)^i b^k x^l\cdot b=\left\{ 
\begin{array}{ll}
x^j (bx)^i b^k x^lb, & \mbox{if } k=l\\
0, & \mbox{otherwise}
\end{array}\right. 
&
\end{eqnarray*}
\caption{The action of the generators on the elements of $S$ via right multiplication in Example~\ref{ex_bigone}.}
\label{fig_right}
}
\end{figure}

\renewcommand{\arraystretch}{1.4}

\begin{figure}[t]
{\footnotesize
\begin{eqnarray*}
&
\begin{array}{l|cccccccccc}
 & x & x^2 & a  &  ac^i & ad^j   &   c^i d^j & xa  & xac^i  &  xad^j  & x^j (bx)^i b^k x^l \\ \hline\hline
a   & 0 & 0 & 0 & 0 & 0 & ac^{i-1} d^{i-1} & 0 & 0 & 0 & 0 \\
b   & bx & bx^2 & 0 & 0 & 0 & 0 & ac & ac^{i+1} & ad^{j-1} & 
\mbox{see below}
\\
c   & 0 & 0 & 0 & 0 & 0 & c^{i+1} d^j & 0 & 0 & 0 & 0 \\
d    & 0 & 0 & 0 & 0 & 0 & c^{i} d^{j+1} & 0 & 0 & 0 & 0 \\
x   & x^2 & x & xa & xac^i & xad^j & 0 & a & ac^i & ad^j & xx^j (bx)^i b^k x^l
\end{array}
&
\\
&
b\cdot x^j (bx)^i b^k x^l =\left\{
\begin{array}{ll}
b x^j (bx)^i b^k x^l, & \mbox{if } j=1 \mbox{ or } j=i=k=0,\\
0, & \mbox{otherwise}
\end{array}
\right.
&
\end{eqnarray*}
\caption{The action of the generators on the elements of $S$ via left multiplication in Example~\ref{ex_bigone}.}
\label{fig_left}
}
\end{figure}

We are going to define a semigroup $S$ by means of a (fairly large) presentation.
The generators are
$$
A=\{a,b,c,d,x\},
$$
and the main relations are
\begin{equation}
\label{eq1b}
bxa=ac,\ acd=a,\ dc=cd,\ x^3=x,\ x^2a=a.
\end{equation}
There is also a number of zero relations, making the `unnecessary' products of generators equal to zero:
\begin{eqnarray*}
&&aa=ab=ax=0,\\
&&ba=bb=bc=bd=0,\\
&&ca=cb=cx=0,\\
&&da=db=dx=0,\\
&&xc=xd=0,\\
&&bx^2b=0.
\end{eqnarray*}
Note also that
\begin{equation*}
ac^{k+1}d=acdc^k=ac^k
\end{equation*}
for all $k\geq 0$. A routine check confirms that the presentation
together with the relations $ac^{k+1}d=ac^k$, viewed as a string rewriting system, is confluent. (See \cite{Book} for definitions relating to rewriting systems.) 
It is easy to see that
this rewriting system is also terminating: indeed, it is length reducing,
except for the relation $dc= cd$, which pushes $d$s systematically
to the right. Therefore, a set of normal forms is provided by all
the words from $A^+$ which do not contain the left hand side of a
relation as a subword; they are:
\begin{eqnarray*}
&& x,\ x^2\\
&&
ac^i,\ ad^j,\ xac^i,\ xad^j,\ c^id^j \ (i,j\geq 0),\\
&&
x^j (bx)^i b^k x^l\ (i\geq 0,\ j=0,1,2, \ k=0,1,\ l=0,1,2).
\end{eqnarray*}
(with the empty word excluded).

Computing the non-singleton Green's classes in $S$ we obtain:
\begin{eqnarray*}
&&
L_x^S=R_x^S=D_x^S=J_x^S=\{ x,x^2\},\\
&&
L_{ac^i}^S=\{ ac^i,xac^i\},\ L_{ad^j}^S=\{ ad^j,xad^j\},\\
&&
R_{a}^S=\{ac^i, ad^i\::\: i\geq 0\}, R_{xa}^S=\{xac^i,xad^i\::\: i\geq 0\},\\
&&
J_{a}^S=D_{a}^S=\{ ac^i,ad^i,xac^i,xad^i\::\: i\geq 0\}.
\end{eqnarray*}
The remaining non-singleton Green's classes in $S$ arise from the remaining normal form words that begin or end in $x$, that is, those of the form
$
x \alpha x, x \alpha b, $ or $b \alpha x, 
$ 
where $\alpha \in A^*$. These elements give rise to the following non-trivial Green's classes in $S$
\begin{eqnarray*}
&&
R_{x \alpha x}^S=\{x \alpha x, x \alpha x^2 \}, L_{x \alpha x}^S=\{x \alpha x, x^2 \alpha x \}, \\ 
&&
D_{x \alpha x}^S=J_{x \alpha x}^S = \{ x \alpha x, x \alpha x^2, x^2 \alpha x, x^2 \alpha x^2 \}, \\
%
%
&&
L_{x \alpha b}^S=D_{x \alpha b}^S=J_{x \alpha b}^S=\{x \alpha b, x^2 \alpha b \}, \\ 
&&
R_{b \alpha x}^S=D_{b \alpha x}^S=J_{b \alpha x}^S=\{b \alpha x, b \alpha x^2 \}. \\ 
%
%
%
%
\end{eqnarray*}
Two useful observations that can be used for the verification of these claims are:
\begin{itemize}
\item If $u$ and $w$ are non-zero words both representing non-zero elements of $S$, and $u=w$ in $S$, then $u$ and $w$ must contain the same number of occurrences of the letter $a$. 
\item If $u$ and $w$ are words both representing the same element of $S$, then $u$ contains a letter different from $x$ if and only if $w$ contains a letter different from $x$. 
\end{itemize}
The claims above about Green's classes $\rrel$, $\lrel$ and $\drel$ in $S$  can all now be easily read off from Tables~\ref{fig_right} and \ref{fig_left}. Of the remaining claims, the most important is that $J_a^S = D_a^S$ so let us now see why this is so. Clearly $D_a^S \subseteq J_a^S$. For the converse, suppose that $w \jrel^S a$ where $w$ is a normal form word. This means there are normal form words $\alpha, \beta, \gamma, \delta$ such that
\[
\alpha w \beta = a \  \mbox{and} \  \gamma a \delta = w
\]
in $S$. 
From $\alpha w \beta = a$ it follows that the word $\alpha w \beta$ contains exactly one occurrence of the letter $a$. But $\gamma a \delta = w$ tells us that $w$ contains at least one occurrence of the letter $a$. Therefore, $w$ must contain exactly one occurrence of the letter $a$, and thus looking at the list of normal form words we conclude that $w$ belongs to the set 
\[
\{ ac^i,ad^i,xac^i,xad^i\::\: i\geq 0\} = D_{a}^S. 
\]
Therefore  $J_a^S = D_a^S$. The claims about the remaining non-trivial $\drel$- and $\jrel$-classes are easily verified, and we conclude $\jrel^S=\drel^S$.

Let now $T=S\setminus\{x\}$. The only words of $S$ that are equal to $x$ are $x^{2i+1}$, where $i\geq 1$.
Such a word cannot be expressed as a product of two elements of $T$. Hence $T$ is a subsemigroup of $S$.
Now note that
$$
a=b\cdot xa \cdot d,\ xa= xbx \cdot a \cdot d;
$$
hence $(a,xa)\in \jrel^T$.
We claim that $(a,xa)\not\in \drel^T$.
As in $S$ we have
$$
R_a^T=\{ac^i,ad^i\::\: i\geq 0\}.
$$
However, unlike the situation in $S$, the $\lrel$-class of $xa$ in $T$ is trivial.
Indeed, 
looking at Table~\ref{fig_left} we see that 
the only elements of $T$ we can premultiply $xa$ with and not obtain $0$ are
of the form $b$, $(xb)^i$, $(xb)^i x^2$, $x(xb)^i$, $x(xb)^ix^2$.
After rewriting we obtain the words $ac^i$ and $xac^i$ where $i \geq 1$.
Thus, by premultiplying $xa$ by elements of $T$ we never get back to $xa$, and so $L_{xa}^T$ is trivial.
Therefore $L_{xa}^T\cap R_a^T=\emptyset$, and hence $(a,xa)\not\in\drel^T$.
\end{example}

The situation is made even more curious by the fact that the property $\jrel=\drel$ \emph{is} inherited by subsemigroups of finite Green index if certain regularity assumptions are made on $S$ \emph{or} $T$.
Below are two sample results. We have not been able to obtain a unified general result.

\begin{thm}\label{th:first}
Let $T$ be a regular subsemigroup of finite Green index in a semigroup $S$.
Then $\jrel=\drel$ in
$S$ implies $\jrel=\drel$ in $T$.
\end{thm}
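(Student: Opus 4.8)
The plan is to prove the only nontrivial inclusion, $\jrel^T\subseteq\drel^T$ as relations on $T$ (the reverse inclusion holds in every semigroup). So fix $a,b\in T$ with $(a,b)\in\jrel^T$ on $T$. Since $T^1aT^1=T^1bT^1$ forces $S^1aS^1=S^1bS^1$, we have $(a,b)\in\jrel^S$, and the hypothesis $\jrel=\drel$ in $S$ gives $(a,b)\in\drel^S$. The whole task is therefore to upgrade this \emph{full} relation $\drel^S$ to the \emph{intrinsic} relation $\drel^T$ on $T$ for the pair $a,b$, and this is where regularity of $T$ and finite Green index must both be used.

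The first, and decisive, ingredient is a transfer lemma for regular elements: if $c,d\in T$ and $(c,d)\in\rrel^S$, then in fact $(c,d)\in\rrel^T$ on $T$, and dually for $\lrel$. Indeed, choosing inverses $c',d'\in T$, the idempotents $cc',dd'\in T$ satisfy $cc'\,\rrel^S\,c\,\rrel^S\,d$ and $dd'\,\rrel^S\,d\,\rrel^S\,c$, so (using idempotency) $cc'd=d$ and $dd'c=c$; rewriting these as $d=c(c'd)$ and $c=d(d'c)$ with $c'd,\,d'c\in T$ exhibits $c\in dT^1$ and $d\in cT^1$, i.e. $(c,d)\in\rrel^T$ on $T$. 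Consequently $R_a^T=R_a^S\cap T$ and $L_b^T=L_b^S\cap T$, so that
\[
R_a^T\cap L_b^T=(R_a^S\cap L_b^S)\cap T.
\]
Since $(a,b)\in\drel^S$, the set $H:=R_a^S\cap L_b^S$ is a nonempty $\hrel^S$-class, and the theorem reduces to the single assertion that $H$ meets $T$.

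To attack this I would pass to the idempotents $e=aa'$ and $f=b'b$ in $T$, noting $e\,\rrel^S\,a\,\drel^S\,b\,\lrel^S\,f$, so that $H=R_e^S\cap L_f^S$ and the $\drel^S$-class $D_a^S$ is \emph{regular} (it contains the idempotent $e$). By the classical Green theory of a regular $\drel$-class there is a connecting element $g\in H$ together with an inverse $g^\ast$ satisfying $gg^\ast=e$ and $g^\ast g=f$, and right translation $x\mapsto xg$ is a bijection of the group $\hrel^S$-class $H_e^S$ onto $H$. Here finite Green index enters: by the transfer lemma the intrinsic group $H_e^T=H_e^S\cap T$ is a subgroup (Proposition~\ref{basicproperties}(ii)), it is exactly the part of $H_e^S$ lying in $T$, and since $S\setminus T$ contains only finitely many relative $\hrel^T$-classes, $H_e^T$ has finite index in $H_e^S$. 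The aim is then to combine this finiteness with the translations by $g$ and $g^\ast$ and the fact that every row and column of $D_a^S$ meeting $T$ carries an idempotent of $T$ (apply $c\mapsto cc'$ to a $T$-point of the row) to produce some $x\in H_e^S$ with $xg\in T$, i.e. a point of $H$ inside $T$.

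The main obstacle is precisely this last localization step. A full $\hrel^S$-class can a priori lie entirely inside $S\setminus T$, so $H\cap T\neq\emptyset$ cannot follow from crude counting; it must exploit the interplay between the group structure of $H_e^S$, its finite-index intrinsic subgroup $H_e^T$, and the fact that the translating element $g$ links the two $T$-idempotents $e,f$. Ruling out the possibility that translation by $g$ carries the whole of $H_e^T$ out of $T$ is the delicate heart of the argument, and is where I expect the real work — and the essential use of finite Green index — to lie. Once $H\cap T\neq\emptyset$ is secured, the transfer lemma immediately gives $(a,b)\in\drel^T$ on $T$, completing the proof.
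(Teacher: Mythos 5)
Your preparatory steps are sound: the transfer lemma for regular elements is correct (it is exactly the fact the paper imports from \cite[Proposition~A.1.16]{QTheoryBook}), and it correctly reduces the theorem to the single claim that the $\hrel^S$-class $H=R_a^S\cap L_b^S$ contains a point of $T$. But you never prove that claim; you say the aim is ``to produce some $x\in H_e^S$ with $xg\in T$'' and call this localization step ``the delicate heart of the argument \dots\ where I expect the real work --- and the essential use of finite Green index --- to lie.'' That missing step \emph{is} the theorem: once the transfer lemma is in place, everything else in your outline (the idempotents $e=aa'$, $f=b'b$, the connecting element $g$, the finite group index of $H_e^T=H_e^S\cap T$ in $H_e^S$) is standard Green theory, and none of it yet touches the question of why some element of the full class $H$ must lie in $T$. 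Finite index of one maximal subgroup cannot settle this by itself, as you yourself observe; the argument has to use the multiplicative witnesses of the $\jrel$/$\drel$ relations, not merely the coset structure at $e$.

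The paper closes exactly this gap with Lemma~\ref{pr:aux}, by a route different from the one you sketch. It first reduces an arbitrary pair $t_1\jrel^T t_2$ to pairs of the special form $(a,ab)$ and $(ba,a)$ with $a,b\in T$ (interpolating $t_1\beta_2$ between $t_1$ and $t_2=\alpha_2t_1\beta_2$). Then, for $a\drel^S ab$, it takes witnesses $a=x_1c$, $c=y_1a$, $ab=cx_2$, $c=aby_2$ and runs a pigeonhole argument on powers: if $(by_2)^k\in S\setminus T$ for infinitely many $k$, finite Green index forces $(by_2)^k\rrel^T(by_2)^{k+n}$ for some $k,n$, whence $a\rrel^S c$, so $a\rrel^S ab$ and one may take the connecting element to be $a$ itself; if instead $(by_2)^k\in T$ for all large $k$, a similar coincidence among the elements $x_1^kc$ (all of which would lie in $S\setminus T$ if $c\notin T$) rewrites $c$ as $t\cdot a\cdot(by_2)^{n+1}\in T$, so the connecting element $c$ already lies in $T$. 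Note that this argument needs the special form $(a,ab)$ so that $c$ factors as $a\cdot by_2$ with powers that can be tracked; your formulation keeps the pair $(a,b)$ general and so lacks this handle. To complete your proof you would have to supply an argument of this power-and-pigeonhole type (or reproduce the paper's reduction and lemma), so as it stands the proposal has a genuine gap at its central step.
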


\begin{thm}\label{th:second}
Let $T$ be a subsemigroup of finite Rees index in a regular semigroup $S$.
Then $\jrel=\drel$ in $S$
implies $\jrel=\drel$ in $T$.
\end{thm}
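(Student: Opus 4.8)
The plan is to deduce the result from the already-established direction together with regularity of $S$, by reducing everything to a single $\drel^S$-connection and then forcing its data into $T$.

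First I would make the standing reduction used throughout this part of the paper: adjoining an identity preserves both $\jrel=\drel$ and finite Rees index, so assume $1\in T$. Take $a,b\in T$ with $(a,b)\in\jrel^T$; writing the witnesses as $a=pbq$, $b=ras$ with $p,q,r,s\in T^1$ gives the loop $a=u^nav^n$ (with $u=pr$, $v=sq\in T^1$) and dually $b=u'^nbv'^n$. Since $(a,b)\in\jrel^T$ we certainly have $(a,b)\in\jrel^S$, and as $\jrel=\drel$ in $S$ this yields $(a,b)\in\drel^S$. Because $\drel^S=\lrel^S\circ\rrel^S$, I can record the connection in the convenient symmetric form $a\,\lrel^S\,d\,\rrel^S\,b$, where $d=\delta a=b\alpha$, $a=\gamma d$ and $b=d\beta$ for suitable $\alpha,\beta,\gamma,\delta\in S^1$. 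The whole theorem then reduces to the following assertion: this $\drel^S$-connection between the two $T$-elements $a$ and $b$ can be re-chosen so that the connecting element $d$ and all four multipliers lie in $T^1$; for then $a\,\lrel^T\,d\,\rrel^T\,b$, whence $(a,b)\in\drel^T$, as required.

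The engine for this re-choice is a pair of dual claims of the shape: if $x,y\in T$ with $(x,y)\in\jrel^T$ and $(x,y)\in\rrel^S$, then $(x,y)\in\rrel^T$ (and dually for $\lrel$). I would prove these by combining regularity with a pigeonhole on the finite complement $S\setminus T$. Regularity supplies an idempotent $g$ in the common $\rrel^S$-class with $gx=x$, $gy=y$, and, through Proposition~\ref{basicproperties}(iii), the $\rrel^T$-class-preserving bijections $\rho_t$ that let one travel back and forth along the loop; the group structure of the relevant $\hrel^T$-classes (Proposition~\ref{basicproperties}(ii)) then provides the inverses needed to cancel a stray left multiplier. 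The role of finite Rees index is to force membership in $T$: applying the pigeonhole used in the proof of Theorem~\ref{J=D} to the sequence of translates of $d$ (or of the powers $u^n,v^n$ along the loop) that would otherwise escape into $S\setminus T$, two of them must be $\rrel^T$- (respectively $\lrel^T$-) related, and feeding this back through the regular structure collapses the \emph{a priori} one-sided inequalities $d\leq_{\rrel}^T b$ and $d\leq_{\lrel}^T a$ into genuine $\rrel^T$- and $\lrel^T$-equalities.

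The step I expect to be the main obstacle is precisely this membership control, together with the fact that here---unlike in the extension direction of Theorem~\ref{J=D}---the $\rrel$- and $\lrel$-sides cannot be treated wholly independently: the connector $d$ must be brought into $T$ while simultaneously keeping it $\rrel^T$-linked to $b$ and $\lrel^T$-linked to $a$. The delicate case is when the relevant powers or translates never enter $T$, that is, when a subconfiguration is trapped inside the finite set $S\setminus T$; there the powers are eventually periodic, so by regularity they meet a subgroup, and the resulting idempotent can be used to re-route the connection through $T$. It is in organising these cases into a single coherent choice of $d$ and of the four $T$-multipliers that the real work lies; once that is done, $a\,\lrel^T\,d\,\rrel^T\,b$ gives $(a,b)\in\drel^T$ and hence $\jrel=\drel$ in $T$.
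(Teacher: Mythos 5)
Your high-level architecture is the same as the paper's (connect $a$ and $b$ through a single element $d$ with $a\,\lrel^S d\,\rrel^S b$, force $d$ into $T$, then upgrade the two one-sided $S$-relations to $T$-relations using regularity and finite index), but both load-bearing steps are asserted rather than proved, and as stated one of them is dubious. The first gap is the re-choice of the connector: producing $d\in T$ with $d\,\jrel^T a$ (so that your ``engine'' is even applicable to the pair $(d,b)$) is precisely the paper's Lemma~\ref{pr:aux}, whose proof is a delicate analysis of whether the powers $(by_2)^k$ and $x_1^k$ eventually enter $T$. Moreover, that lemma's proof depends on first reducing to pairs of the form $(a,ab)$ --- a reduction you skip by working with a general $\jrel^T$-pair --- because it is the special form that yields the self-reproducing equations $a=x_1\cdot a\cdot by_2$ and $c=x_1\cdot c\cdot by_2$ on which the whole argument runs. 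Your remark that trapped powers are eventually periodic and meet a subgroup is a correct observation, but it is nowhere near a proof of this lemma.

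The second gap is the ``engine'' itself. You claim: $x,y\in T$, $x\,\jrel^T y$ and $x\,\rrel^S y$ imply $x\,\rrel^T y$. The paper proves only the weaker conclusion $x\,\drel^T y$, and for good reason: several of its subcases produce only an indirect connection $y\,\rrel^T y\beta\,\lrel^T x$ via Lemma~\ref{lm:funny}, not a direct $\rrel^T$-relation, and it is not at all clear your stronger statement is true (the weaker one suffices for the theorem, since $\drel^T$ is transitive, so you should weaken it in any case). More seriously, your proof sketch for this step relies on ``the pigeonhole used in the proof of Theorem~\ref{J=D}'', i.e.\ the Green-index pigeonhole, which only yields relative relations such as $f(\gamma\alpha)^{k+n}=t'f(\gamma\alpha)^k$ with a stray multiplier $t'\in T$. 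In the computations required here that $t'$ gets trapped in the middle of a product (one obtains $y=x\,t'fy(\beta\delta)^n$ rather than $y=y(\beta\delta)^n$) and cannot be cancelled; this is exactly where the paper instead uses finiteness of $S\setminus T$ to extract a literal equality $f(\gamma\alpha)^k=f(\gamma\alpha)^{k+n}$, which is what feeds the hypothesis $y=y(\beta\delta)^n$ of Lemma~\ref{lm:funny}, and is the reason the theorem is stated for Rees index rather than Green index. Without this equality mechanism and the ensuing case analysis (the paper's Cases 1--4 with subcases on the membership of $x'$, $y'$ and the connecting elements in $T$ or $S\setminus T$), your proposal has no working replacement for the core of the proof.
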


In order to prove  Theorems \ref{th:first} and  \ref{th:second} we need the following technical lemma:

\begin{lemma}\label{pr:aux}
Let $S$ be a semigroup, let $T$ be a subsemigroup of finite Green index in $S$, and let
$a,b\in T$ be such that $a\drel^S ab$. Then there exists $d\in
T$ such that $a\lrel^S d\rrel^S ab$ and $d\jrel^T a$.
\end{lemma}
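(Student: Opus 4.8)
The plan is to first extract, from $a\drel^S ab$, a ``corner'' element witnessing the $\drel^S$-relation, and then to use finite Green index to drag a suitable corner element into $T$ while keeping it $\jrel^T$-equivalent to $a$. Since by definition $\drel^S=\lrel^S\circ\rrel^S$, I would fix $d_0\in S$ with $a\lrel^S d_0\rrel^S ab$ and record $S^1$-connectors $d_0=u_0a$, $a=u_0'd_0$, $ab=d_0p_0$, $d_0=abp_0'$, which immediately yield the two ``return'' identities $a=(u_0'u_0)a$ and $ab=ab(p_0'p_0)$. The key observation is that the target $d$ only needs three properties: to lie in $H:=L_a^S\cap R_{ab}^S$, to lie in $T$, and to satisfy $d\jrel^T a$. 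If $d$ is manufactured as a right $T$-multiple of $ab$, say $d=ab\,w$ with $w\in T^1$, then $d\in T$ (as $ab\in T$) and $d=a(bw)\in aT^1\subseteq T^1aT^1$, so $d\le_{\jrel^T}a$ is automatic. Thus the genuine content splits into (i) arranging $ab\,w\in H$ with $w\in T^1$, and (ii) establishing the reverse inequality $a\le_{\jrel^T}d$.

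The engine for (i) and (ii) is the finite-index pigeonhole already used in Lemmas~\ref{step1}--\ref{lemma4}. From $a=(u_0'u_0)a$ I would examine the powers $(u_0'u_0)^k$: if infinitely many lie outside $T$ then, since there are only finitely many $\lrel^T$-classes in $S\setminus T$, two of them are $\lrel^T$-related, and multiplying the witnessing equality on the right by $a$ produces $t\in T$ with $ta=a$ (if only finitely many powers lie outside $T$, some power already lies in $T$ and does the same job); dually one extracts $s\in T$ with $(ab)s=ab$. More substantially, since $\drel^S\subseteq\jrel^S$ there is a witness $a=\alpha\,ab\,\beta$, which I would iterate (using $ab=ab$) to $a=\alpha^k a(b\beta)^k$ for all $k\in\nat$; feeding the sequences $(\alpha^k a)_k$ and $(a(b\beta)^k)_k$ to finite Green index is what supplies genuine \emph{two-sided} $T$-factorizations linking $a$ to a corner representative, rather than mere one-sided identities.

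With these $T$-elements in hand I would assemble $d$ as an appropriate right $T$-multiple of $ab$, verify that right-multiplication by the extracted $T$-connector neither drops the $\rrel^S$-class of $ab$ nor leaves the $\lrel^S$-class of $a$ (so that $d\in H$, giving $a\lrel^S d\rrel^S ab$), and then use the $T$-sided factorization coming from the iterated witness to write $a\in T^1dT^1$, i.e.\ $a\le_{\jrel^T}d$. Combined with the automatic $d\le_{\jrel^T}a$ this gives $d\jrel^T a$, completing the argument.

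The main obstacle is that the two Green's-lemma bijections between $R_a^S$ and $R_{ab}^S$ (and between $L_a^S$ and $L_{ab}^S$) are \emph{rigid}: being mutually inverse, their iterates generate no long orbit, so finite Green index cannot be applied to them directly, and the naive sequences built from $u_0,u_0',p_0,p_0'$ collapse to constants. The finiteness must instead be injected through the iterated witness $a=\alpha^k a(b\beta)^k$. The delicate point is to exploit this while simultaneously (a) keeping the manufactured element inside the single $\hrel^S$-class $H$ that the full relations $\lrel^S$ and $\rrel^S$ pin down, and (b) securing the reverse relative inequality $a\le_{\jrel^T}d$. Reconciling the rigid full-relation constraints with the relative $\jrel^T$-constraint produced by the pigeonhole is where the bookkeeping concentrates, and I expect this to be the hardest step.
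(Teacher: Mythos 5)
Your proposal sets up exactly the same data as the paper's proof (a corner element with $a\lrel^S c\rrel^S ab$, the four connectors, and the iterated witness $a=\alpha^k a(b\beta)^k$), but it stops precisely where the real work begins. The only pigeonhole extractions you actually carry out produce the stabilizing identities $ta=a$ and $ab\,s=ab$; these relate $a$ to itself and $ab$ to itself, and contribute nothing toward connecting $a$ to an element of $L_a^S\cap R_{ab}^S$. The step that constitutes the entire proof --- ``assemble $d$, verify $d\in H$, and write $a\in T^1dT^1$'' --- is never performed, and you yourself flag it as the part you have not resolved. The missing idea is a dichotomy on the powers of the \emph{specific} right connector $by_2$, where $c=aby_2$ and $a=x_1c$: from these one gets $a=x_1^k a(by_2)^k$ and $c=x_1^k c(by_2)^k$ for all $k$. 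If infinitely many $(by_2)^k$ lie in $S\setminus T$, the pigeonhole gives $(by_2)^{k+n}t=(by_2)^k$ with $t\in T^1$, whence $a=a(by_2)^nt\in cS^1$, so $a\rrel^S c\rrel^S ab$ and $d=a$ works outright. If instead $(by_2)^k\in T$ for all large $k$, one shows (by further pigeonhole arguments, applied to the sequences $x_1^kc$ and $x_1^k$) first that $c\in T$ and then that $c\in TaT$ and $a\in TcT$, so $d=c$ works. Without this case split and these computations there is no proof; ``feeding the sequences to finite Green index'' is a description of a method, not an argument.

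There is also a structural problem with the scaffolding you chose. Insisting that $d$ have the form $ab\,w$ with $w\in T^1$ (so that $d\le_{\jrel^T}a$ becomes ``automatic'') imposes a constraint that the actual proof does not meet and that nothing in the hypotheses guarantees can be met: in the first case above the correct choice is $d=a$, which is a right $S^1$-multiple of $ab$ (namely $a=ab\cdot y_2(by_2)^{n-1}t$) but not visibly a right $T^1$-multiple; in the second case it is $d=c=aby_2$ with $y_2\in S$, and one proves $c\in T$ and $c\in TaT$ without ever producing a factorization $c=ab\,w$ with $w\in T^1$. So the ``automatic'' half of your plan is purchased with an existence claim that is itself unproven and quite possibly false in general. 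The honest route is to prove \emph{both} inequalities $d\le_{\jrel^T}a$ and $a\le_{\jrel^T}d$ by pigeonhole, which is what the paper does, rather than to build one of them into the shape of $d$.
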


\begin{proof}
Since $a\drel^S ab$, there exists $c\in S$ such that
$a\lrel^S c\rrel^S ab$. This means that there exist
$x_1,x_2,y_1,y_2\in S$ with
\[
a = x_1c, \ c = y_1a,\ ab = cx_2,\  c = aby_2.
\]
Then $a=x_1\cdot a\cdot by_2$ and $c=x_1\cdot c\cdot by_2$.

There are two cases to consider.
\smallskip

\textbf{Case 1:} \emph{there are infinitely many $k\geq 1$ with
$(by_2)^k\in S\setminus T$.}
Then, since $T$ has finite Green index in $S$, 
there exist $k,n\geq 1$ such that $(by_2)^k\rrel^T
(by_2)^{k+n}$. In particular there exists $t\in T$ such that
$(by_2)^{k+n}t=(by_2)^{k}$. Then
\begin{equation*}
a=x_1^ka(by_2)^k=x_1^ka(by_2)^{k+n}t=a(by_2)^nt=c\cdot
(by_2)^{n-1}t.
\end{equation*}
Together with $c=a\cdot by_2$, we obtain $a\rrel^S c$. Then
$a\rrel^S ab$ and the assertion holds with $d=a$.
\smallskip

\textbf{Case 2:} \emph{there exists $k_0\geq 1$ such that $(by_2)^k\in T$
for all $k\geq k_0$.}
We claim that the assertion of the lemma holds for $d=c$.
We prove first that $c\in T$. Suppose the converse: $c\in S\setminus
T$, and recall that $c=x_1^kc\cdot (by_2)^k$. Hence
$x_1^kc\in S\setminus T$ for all $k\geq k_0$. Then there exist
$k,n\geq k_0$ such that $x_1^kc\lrel^T x_1^{k+n}c$. In
particular, $tx_1^kc=x_1^{k+n}c$ for some $t\in T$. Then
\begin{equation*}
c=x_1^{k+n}c(by_2)^{k+n}=tx_1^{k}c(by_2)^{k+n}=tx_1\cdot
c\cdot (by_2)^{n+1}=t\cdot a\cdot (by_2)^{n+1}.
\end{equation*}
Since $n\geq k_0$, we obtain $c\in T$, a contradiction. Hence $c\in
T$. It remains to prove that $c\jrel^T a$.

Now, we have $c=x_1^k\cdot a\cdot (by_2)^{k+1}$ for all $k\geq k_0$.
If there are infinitely many $k$ such that $x_1^k\in S\setminus T$,
then there exist $k,n\geq k_0$ such that $x_1^{k+n}=t\cdot x_1^{k}$
for some $t\in T$. Then
$$c=x_1^{k+n}\cdot a\cdot (by_2)^{k+n+1}=tx_1^{k}\cdot a\cdot
(by_2)^{k+n+1}=t\cdot a\cdot (by_2)^{n+1}$$ and so $c\in TaT$. On
the other hand, if $x_1^k\in T$ for all $k\geq N_0$ for some
$N_0\geq k_0$, then $c=x_1^{N_0}\cdot a\cdot (by_2)^{N_0+1}$ and so
$c\in TaT$.

Having $a=x_1^{k+1}\cdot c\cdot (by_2)^k$ for all $k\geq k_0$, by
analogous reasoning as in the previous paragraph we deduce that
$a\in TcT$. Thus $c\jrel^T a$, as required.
\end{proof}

\proofref{th:first}
Suppose that $\jrel=\drel$ in $S$.
Take two $\jrel^T$-equivalent elements $t_1$ and $t_2$ of $T$. 
Then there exist elements $\alpha_1,\alpha_2,\beta_1,\beta_2\in
T$ with $t_1=\alpha_1t_2\beta_1$ and $t_2=\alpha_2t_1\beta_2$. Then
$t_1\jrel^T t_1\beta_2\jrel^T t_2$ and
$t_2=\alpha_2\cdot t_1\beta_2$. Hence, to prove the theorem, it
suffices to establish that, for every two elements $a,b\in T$, if
$a\jrel^T ab$ then $a\drel^T ab$, and that if
$a\jrel^T ba$ then $a\drel^T ba$. We will only prove the first assertion, the second follows by a similar argument. 

So suppose that $a,b\in T$ are such that $a\jrel^T ab$. Then
$a\jrel^S ab$ and so $a\drel^S ab$. By
Lemma~\ref{pr:aux} we have that $a\lrel^S c\rrel^S ab$ for some $c\in T$. Now, since $T$ is regular, we have
$a\lrel^T c$ and $c\rrel^T ab$ (see \cite[Proposition~A.1.16]{QTheoryBook}). Thus $a\drel^T ab$
and we are done.
\qed\vspace{\baselineskip}

Before proving Theorem \ref{th:second} we need another technical result:

\begin{lemma}\label{lm:funny}
Let $T$ be a semigroup and $x,y,\alpha,\beta,\gamma,\delta\in T$
such that $x=\alpha y\beta$ and $y=\gamma x\delta$. Furthermore,
assume that there exists $n\in\mathbb{N}$ such that
$y=y(\beta\delta)^n$. Then $x$ and $y$ are $\drel$-related in
$T$.
\end{lemma}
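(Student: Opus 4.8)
The plan is to exhibit an intermediate element $z \in T$ with $x \rrel z \lrel y$, which immediately gives $(x,y)\in\drel$ in $T$ since $\drel = \rrel\circ\lrel$. The hypothesis $y = y(\beta\delta)^n$ is a \emph{right} stabilisation of $y$, so the first move is to convert the two $\jrel$-equations into a usable \emph{left} stabilisation. Writing $u = \gamma\alpha$ and $v = \beta\delta$, the relations $x = \alpha y\beta$ and $y = \gamma x\delta$ combine to give $y = uyv$, and hence $y = u^k y v^k$ for every $k\geq 1$ by a trivial induction. Taking $k=n$ and using $yv^n = y$ then yields $y = u^n y v^n = u^n(yv^n) = u^n y$; that is, $y = (\gamma\alpha)^n y$.

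With this in hand I would take $z = \alpha y$ (which lies in $T$) and verify the two one-sided relations directly. For $x \rrel \alpha y$: on the one hand $x = (\alpha y)\beta \in (\alpha y)T^1$, and on the other hand, rewriting the right stabilisation via $(\beta\delta)^n = \beta(\delta\beta)^{n-1}\delta$ gives $\alpha y = \alpha y(\beta\delta)^n = x(\delta\beta)^{n-1}\delta \in xT^1$, so the two principal right ideals coincide. For $\alpha y \lrel y$: clearly $\alpha y \in T^1 y$, while the derived left stabilisation gives $y = (\gamma\alpha)^{n-1}\gamma(\alpha y)\in T^1(\alpha y)$, so the two principal left ideals coincide. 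Chaining these yields $x \rrel \alpha y \lrel y$, as required.

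The one genuinely creative step is the passage to the left stabilisation $y = (\gamma\alpha)^n y$. The naive corner candidates, such as $z = x\delta$ or $z = y\beta$, each yield one inclusion for free but then demand that a factor sitting to the \emph{right} of $y$ be absorbed into a \emph{left} ideal, which the right-sided hypothesis alone cannot deliver; deriving $y = u^n y$ is precisely what removes this asymmetry and makes $z = \alpha y$ work. The only point needing care is that all of this requires $n\geq 1$, since the identity $(\beta\delta)^n = \beta(\delta\beta)^{n-1}\delta$ and the factorisations above degenerate at $n=0$. As $y = y(\beta\delta)^n$ forces $y = y(\beta\delta)^{kn}$ for every $k\in\nat$, one may harmlessly assume $n\geq 1$.
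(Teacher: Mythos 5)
Your proof is correct and takes essentially the same route as the paper: both arguments hinge on deriving the left stabilisation $y=(\gamma\alpha)^n y$ from $y=(\gamma\alpha)^n y(\beta\delta)^n$ and the hypothesis, and then exhibit a single corner element — the paper uses $y\beta$ (chaining $y \mathrel{\rrel} y\beta \mathrel{\lrel} x$) where you use the mirror-image choice $\alpha y$ (chaining $x \mathrel{\rrel} \alpha y \mathrel{\lrel} y$). Your closing remark about reducing to $n\geq 1$ is a harmless refinement of a point the paper leaves implicit.
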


\begin{proof}
First notice that $y\rrel^T y\beta$. Now,
\begin{equation*}
y=(\gamma\alpha)^n\cdot y\cdot (\beta\delta)^n=(\gamma\alpha)^ny.
\end{equation*}
Hence $y\beta=(\gamma\alpha)^{n-1}\gamma\cdot \alpha
y\beta=(\gamma\alpha)^{n-1}\gamma\cdot x$. Since $x=\alpha\cdot
y\beta$ we obtain $y\beta\lrel^T x$. Thus $x\drel^T y$.
\end{proof}

\proofref{th:second}
Suppose that $\jrel=\drel$ in $S$.
As in the proof of Theorem~\ref{th:first}, it suffices to prove that
if $a\jrel^T ab$ then $a\drel^T ab$ for all $a,b\in T$.
So, let $a\jrel^T ab$ for some $a,b\in T$. Then
$a\drel^S ab$. By Lemma~\ref{pr:aux} we have that there
exists $c\in J_a^T=J_{ab}^T$ such that $a\lrel^S c\rrel^S ab$.

Therefore, in order to prove the theorem, it is enough to prove that
if $x,y\in T$ are such that $x\rrel^S y$ (or $x\lrel^S y$)
and $x\jrel^T y$, then $x\drel^T y$. We will do this
only in the case of $\rrel$, the other case follows by symmetry. 

So, let $x,y\in T$ be such that $x\jrel^T y$ and
$x\rrel^S y$.
Since $S$ is regular, 
there
exist $x',y'\in S$ such that $x=xx'x$ and $y=yy'y$. There also exist
$\alpha,\beta,\gamma,\delta\in T$ such that $x=\alpha y\beta$ and
$y=\gamma x\delta$. There are four cases.

\vspace{\baselineskip}

\noindent\textbf{Case 1:} \emph{$x\in yT$ and $y\in xT$.}
Then immediately $x\rrel^T y$, as required.

\vspace{\baselineskip}

\noindent\textbf{Case 2:}  \emph{$x=yt$ and $y=xf$ for some $t\in T$ and
$f\in S\setminus T$.} In this case we distinguish three subcases:

\vspace{\baselineskip}

\noindent\textbf{Subcase 2a:} \emph{$x'\in T$ and $y'\in T$.} Then $x\rrel^T xx'$ and
$y\rrel^T yy'$. In addition, $xx'\rrel^S x\rrel^S
y\rrel^S yy'$. Hence, since an idempotent is a left identity in its $\rrel$-class, $xx'=yy'\cdot xx'$ and $yy'=xx'\cdot
yy'$. Therefore $xx'\rrel^T yy'$ and so $x\rrel^T y$.

\vspace{\baselineskip}

\noindent\textbf{Subcase 2b:} \emph{$x'\in S\setminus T$ and $y'\in T$.} Then
$y\rrel^T yy'$. Moreover,
\begin{align}\label{eq:case}
x &= yy'\cdot yt,\\
\label{eq:case1}
yy' &= x\cdot fy'.
\end{align}
If $fy'\in T$ then $x\rrel^T yy'$ and so $x\rrel^T y$.
So, suppose $fy'\in S\setminus T$. Recall that $x\rrel^S yy'$ and
$x\jrel^T yy'$. Since it suffices to prove that
$x\drel^T yy'$ and since $yy'$ is an idempotent, in view of~\eqref{eq:case} we may assume that $y^2=y=y'$. Now \eqref{eq:case1} becomes $y=x\cdot
fy$. If $fy\in T$ then $x\rrel^T y$, as required. Hence we may
assume that $fy\in S\setminus T$. Then $f(\gamma\alpha)^k\cdot
y(\beta\delta)^k\in S\setminus T$ for all $k\geq 1$ and so, since $\beta, \delta \in T$,
it follows that $f(\gamma\alpha)^k\in S\setminus T$ for all $k\geq 1$. 
Since $T$ has finite Rees index in $S$,
this implies
that $f(\gamma\alpha)^k=f(\gamma\alpha)^{k+n}$ for some $k,n\geq 1$.
Then
\begin{equation*}
fy=f\cdot
(\gamma\alpha)^{k+n}y(\beta\delta)^{k+n}=f(\gamma\alpha)^{k}y(\beta\delta)^{k+n}=fy(\beta\delta)^{n}.
\end{equation*}
Hence $y=x\cdot fy=xfy(\beta\delta)^{n}=y(\beta\delta)^n$ and so by
Lemma~\ref{lm:funny}, $x\drel^T y$, as required.

\vspace{\baselineskip}

\noindent\textbf{Subcase 2c:}  \emph{$y'\in S\setminus T$.} Then
$y=x\cdot fy'y$. If $fy'y\in T$ then $x\rrel^T y$. Hence we
may assume that $fy'y\in S\setminus T$. Then
$fy'(\gamma\alpha)^k\cdot y(\beta\gamma)^k\in S\setminus T$ for all
$k\geq 1$. Thus $fy'(\gamma\alpha)^k\in S\setminus T$ for all $k\geq
1$. Since $S\setminus T$ is finite, there exist $k,n\geq 1$ such that
$fy'(\gamma\alpha)^k=fy'(\gamma\alpha)^{k+n}$. Hence
\begin{equation*}
fy'(\gamma\alpha)^k=fy'(\gamma\alpha)^{k+nr}
\end{equation*}
for all $r\geq 1$. Since $y=xf$, we have $yy'(\gamma\alpha)^k=yy'(\gamma\alpha)^{k+nr}$ for all $r\geq 1$. Now,
\begin{equation}
\label{11}
y = yy'y = yy'(\gamma\alpha)^ky(\beta\delta)^k= yy'(\gamma\alpha)^{k+nr}y(\beta\delta)^k 
= yy'(\gamma\alpha)^{nr}y 
\end{equation}
for all $r\geq 1$. Hence we may assume that
$y'(\gamma\alpha)^{nr}\in S\setminus T$ for all $r\geq 1$ (otherwise
the assertion follows by Subcase 2b and we obtain that $x\drel^T y$). So,
since $S \setminus T$ is finite,
there exist $r_1<r_2$ with $r_2-r_1>1$ such that
$y'(\gamma\alpha)^{nr_1}=y'(\gamma\alpha)^{nr_2}$. Then
\begin{equation}
\label{22}
\begin{split}
y'y(\beta\delta)^n &= y'(\gamma\alpha)^{nr_1}\cdot y(\beta\delta)^{n(r_1+1)} 
= y'(\gamma\alpha)^{nr_2}\cdot y(\beta\delta)^{n(r_1+1)}  \\
&= y'(\gamma\alpha)^{n(r_2-r_1-1)}y.
\end{split}
\end{equation}
Combining \eqref{11} and \eqref{22} yields
\begin{equation*}
y=y\cdot y'(\gamma\alpha)^{n(r_2-r_1-1)}y=yy'y\cdot
(\beta\delta)^n=y(\beta\delta)^n
\end{equation*}
and so $x\drel^T y$ by Lemma~\ref{lm:funny}.

\vspace{\baselineskip}

\noindent\textbf{Case 3:} \emph{$x=yf$ and $y=xt$ for some $t\in T$ and
$f\in S\setminus T$.}
This case is similar to Case 2.

\vspace{\baselineskip}

\noindent\textbf{Case 4:} \emph{$x=yf_1$ and $y=xf_2$ for some $f_1,f_2\in
S\setminus T$.} Once again we will distinguish three subcases:

\vspace{\baselineskip}

\noindent\textbf{Subcase 4a:} \emph{$x'\in T$ and $y'\in T$.} 
Observe that $xx'\rrel^S yy'$, so that $xx'\rrel^T yy'$, by properties of idempotents, and it follows immediately that $x\rrel^T y$.

\vspace{\baselineskip}

\noindent\textbf{Subcase 4b:} \emph{$x'\in S\setminus T$ and $y'\in T$.} Note first that
\[
x = yy'\cdot yf_1,\ 
yy' = x\cdot f_2y'.
\]
If either of $yf_1$ or $f_2y'$ is in $T$ then $x\drel^T yy'$ by Cases 1--3, and we are done.
Since $y\rrel^T yy'$,
then $x\drel^T y$ and we are done. Hence 
$yf_1 \in S \setminus T$ and $f_2 y' \in S \setminus T$, and
without loss of
generality we may assume that $y^2=y$ and $y'=y$. Then $y=x\cdot
f_2y$. If $f_2y\in T$ then we reduce to Case 3 and the proof is
complete. So we may assume that $f_2y\in S\setminus T$. Then, as
before, $f_2(\gamma\alpha)^k\in S\setminus T$ for all $k\geq 1$.
Then $f_2(\gamma\alpha)^{k+n}=f_2(\gamma\alpha)^k$ for some $k,n\geq
1$. This implies $f_2y(\beta\delta)^n=f_2y$ and so
\begin{equation*}
y=xf_2y=xf_2y(\beta\delta)^n=y(\beta\delta)^n.
\end{equation*}
Then by Lemma~\ref{lm:funny}, $x\drel^T y$.

\vspace{\baselineskip}

\noindent\textbf{Subcase 4c:} \emph{$y'\in S\setminus T$.} Then in the same way as in Case
2c one can show that this subcase can be reduced to Case
4b or Case 3.
\qed


\section{Finitely Many Ideals}\label{sect_finiteideals}

In~\cite{Gray} it was proved that if $T$ is a subsemigroup of
finite Green index in a semigroup $S$, then $T$ has finitely many
right (respectively, left) ideals if and only if $S$ has finitely many right (resp., left) ideals. In this section we prove the corresponding theorem for the case of two-sided ideals. In particular, this provides a positive solution to \cite[Open Problem 11.3(i)]{ruskuc1}.

\begin{thm}\label{finiteideals}
Let $S$ be a semigroup and let $T$ be a subsemigroup of $S$ with
finite Green index. Then $T$ has finitely many ideals if and only if
$S$ has finitely many ideals.
\end{thm}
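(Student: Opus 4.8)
The plan is to reduce the statement to a clean counting fact about $\jrel$-classes. First I would record the elementary reduction that a semigroup has finitely many ideals if and only if it has finitely many $\jrel$-classes: the ideals of a semigroup $U$ correspond exactly to the down-sets of the poset $U/\jrel$ (an ideal being precisely a union of $\jrel$-classes closed downwards under $\leq_\jrel$), and a poset has only finitely many down-sets if and only if it is finite, since the principal down-sets $\{\, y : y \leq x \,\}$ are pairwise distinct by antisymmetry. Applying this to $S$ and to $T$, the theorem becomes the assertion that $S$ has finitely many $\jrel^S$-classes if and only if $T$ has finitely many $\jrel$-classes. Moreover, since each relative class lies wholly in $T$ or wholly in $S\setminus T$, and finite Green index forces $S\setminus T$ to contain only finitely many $\rrel^T$-classes and hence only finitely many $\jrel^T$-classes, having finitely many $\jrel$-classes in $T$ is equivalent to $S$ having finitely many $\jrel^T$-classes altogether. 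So it suffices to compare the numbers of $\jrel^T$- and $\jrel^S$-classes of $S$.

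One direction is immediate. Exactly as noted in Section~\ref{secJD}, $\jrel^T \subseteq \jrel^S$ on $S$ (if $T^1uT^1 = T^1vT^1$ then $S^1uS^1 = S^1vS^1$), so every $\jrel^S$-class is a union of $\jrel^T$-classes. Hence finitely many $\jrel^T$-classes implies finitely many $\jrel^S$-classes, which yields the passage from $T$ to $S$.

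For the converse I must show that finitely many $\jrel^S$-classes implies finitely many $\jrel^T$-classes, and by the preceding remark this amounts to the core claim that each $\jrel^S$-class $J$ of $S$ contains only finitely many $\jrel^T$-classes. Since $S\setminus T$ meets only finitely many $\jrel^T$-classes in total, it is enough to bound the $\jrel^T$-classes inside $J\cap T$. Here I would reuse the quadruple machinery of Section~\ref{secJD}: for $a,b\in J\cap T$ choose $(x_1,x_2,y_1,y_2)\in Q_{a,b}$, and apply \eqref{4star} together with the finite-Green-index pigeonhole exactly as in Lemmas~\ref{lemma1} and~\ref{lemma4}. When all four powered multiplier sequences $x_1(x_2x_1)^k$, $x_2(x_1x_2)^k$, $y_1(y_2y_1)^k$, $y_2(y_1y_2)^k$ eventually lie in $T$, a sufficiently high power yields a quadruple in $Q_{a,b}$ with all entries in $T$, which literally witnesses $a\,\jrel^T\,b$.

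The main obstacle is the remaining case, where some multiplier sequence stays in $S\setminus T$ for infinitely many $k$. By Lemma~\ref{lemma1} this produces one-sided relations such as $(b,x_1b)\in\lrel^S$ or $(b,by_1)\in\rrel^S$, and finite Green index forces $\lrel^T$- or $\rrel^T$-coincidences among the finitely many relative classes contained in $S\setminus T$. The crux is to organise these coincidences so as to conclude that only finitely many distinct $\jrel^T$-classes of $J\cap T$ can arise through such boundary factorisations; this is the step I expect to be the most delicate, and it is precisely where the finiteness of the relative $\rrel^T$- and $\lrel^T$-structure on $S\setminus T$ must be exploited.
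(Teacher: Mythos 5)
Your reduction and the easy direction are fine: ideals correspond to down-sets of the $\jrel$-poset, the relative $\jrel^T$-classes inside $T$ are exactly the $\jrel$-classes of the semigroup $T$, finite Green index bounds the relative classes in $S\setminus T$, and $\jrel^T\subseteq\jrel^S$ handles the passage from $T$ to $S$; this matches the paper's ($\Rightarrow$) direction in substance. But for the hard direction your proposal stops exactly where the proof has to start. The only case you actually close is the one where some quadruple in $Q_{a,b}$ has all four powered multiplier sequences eventually in $T$, which literally gives $a\,\jrel^T\,b$; the ``remaining case'' is not a delicate loose end but the entire content of the theorem, and you give no argument for it. Worse, the tool you propose for it cannot do the job: Lemma~\ref{lemma1} outputs relations of the form $(b,x_1b)\in\lrel^S$ and $(b,by_1)\in\rrel^S$, i.e.\ Green's relations \emph{on $S$}, and these say nothing about how $J\cap T$ splits into $\jrel^T$-classes --- indeed all elements of $J$ are already $\jrel^S$-related, so $S$-side relations carry no new separating or merging information for the count you need. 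There is also a structural mismatch: a $\jrel^S$-class may genuinely contain several (just finitely many) $\jrel^T$-classes, so no argument that works pair-by-pair on $(a,b)$ and tries to conclude $a\,\jrel^T\,b$ can succeed; what is needed is a counting contradiction involving many elements at once.

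That is what the paper does, by an argument of a different combinatorial character. Assuming $T$ has infinitely many $\jrel$-classes, some $\jrel^S$-class contains an infinite chain or antichain of $\jrel^T$-classes, hence for any $N$ one can pick $u_1,\dots,u_N\in J\cap T$ with $J_{u_i}^T\not\leq J_{u_j}^T$ for $i<j$. Writing $u_i=\alpha_{i,j}u_j\beta_{i,j}$ with multipliers in $S$, non-comparability forces at least one of $\alpha_{i,j},\beta_{i,j}$ outside $T$ for each pair. Taking $N$ to be the Ramsey number $R(Q,Q,Q)$ with $Q=P^2+2$ ($P$ the number of $\hrel^T$-classes in $S\setminus T$), Ramsey's theorem makes the membership pattern uniform on a $Q$-subset, and then the pigeonhole principle on $\hrel^T$-classes produces indices $i<j$ and elements $a,b\in T$ with $\alpha_{i,Q}=a\alpha_{j,Q}$ (and, when needed, $\beta_{i,Q}=\beta_{j,Q}b$), whence $u_i=au_jb$ or $u_i=au_j\beta_{i,j}$ with all displayed multipliers in $T$ --- contradicting $J_{u_i}^T\not\leq J_{u_j}^T$. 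The key trick is replacing out-of-$T$ multipliers by in-$T$ ones via $\hrel^T$-coincidences across a long sequence of pairwise non-comparable elements; nothing in the $Q_{a,b}$ machinery of Section~\ref{secJD} substitutes for it. As it stands, your proof is incomplete.
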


\begin{proof}
As usual, we assume without loss of generality that $S$ has an identity element and that $1\in T$.

($\Rightarrow$)
Suppose that $T$ has finitely many ideals, or, equivalently, finitely many $\jrel$-classes.
Let $J^S$ be an arbitrary $\jrel$-class of $S$.
Then $J^S\cap T$ is a union of $\jrel$-classes of $T$, while $J^S\cap (S\setminus T)$ is a union of relative $\rrel^T$-classes of $S$. It follows that $S$ has finitely many $\jrel$-classes.

($\Leftarrow$)
Let now $S$ have finitely many ideals, and suppose that $T$ has infinitely many ideals.
Then there exists a $\jrel$-class $J_u^S$ of $S$ which contains infinitely many $\jrel$-classes of $T$. 
In particular, $J_u^S$ either contains an infinite chain
or an infinite antichain
of $\jrel$-classes of $T$. In either case, for an arbitrary $N\in\mathbb{N}$ we can pick $u_1,\ldots,u_N\in J_u^S\cap T$ such that
\begin{equation}
\label{eq1c}
J_{u_i}^T \not\leq  J_{u_j}^T\ (1\leq i<j\leq N).
\end{equation}

We choose a specific $N$ as follows.
Let $P+1$ be the Green index of $T$ in $S$; thus, $P$ is equal to the number of $\hrel^T$-classes in $S\setminus T$.
Let $Q=P^2+2$, and
let $N$ be the Ramsey number $R(Q,Q,Q)$. Recall that this means that for every edge colouring of the complete graph of size $N$ with three colours there exists a monochromatic complete subgraph with $Q$ vertices.

Since $u_1,\ldots, u_N$ are all $\jrel$-related in $S$, we can write
\begin{equation}
\label{eq2b}
u_i = \alpha_{i} u_{i+1} \beta_{i}\ (i=1,\ldots,N-1;\ \alpha_i,\beta_{i}\in S).
\end{equation}
Define
\begin{equation}
\label{eq3b} \alpha_{i,j}=\alpha_i \alpha_{i+1} \ldots
\alpha_{j-1},\ \beta_{i,j}=\beta_{j-1}\ldots \beta_{i+1}\beta_i\ (1\leq i<j\leq N).
\end{equation}
These elements satisfy
\begin{equation}
\label{eq4b} u_i=\alpha_{i,j} u_j \beta_{i,j}\ (1\leq i< j\leq N).
\end{equation}

From (\ref{eq4b}) and (\ref{eq1c}) it follows that for all 
$1\leq i<j\leq N$ at least one of $\alpha_{i,j},\beta_{i,j}$ is not in $T$. 
Recalling $N=R(Q,Q,Q)$, it follows by Ramsey's Theorem that there exists a set $I\subseteq \{1,\ldots , N\}$
of size $Q$ such that one of the following three possibilities holds:
\begin{eqnarray}
\label{eqnrcase1}
&&\alpha_{i,j},\beta_{i,j}\in S\setminus T\ (i,j\in I,\ i<j),\\
\label{eqnrcase2}
&&\alpha_{i,j}\in S\setminus T,\ \beta_{i,j}\in T\ (i,j\in I,\ i<j),\\
\label{eqnrcase3}
&&\alpha_{i,j}\in  T,\ \beta_{i,j}\in S\setminus T\ (i,j\in I,\ i<j).
\end{eqnarray}
Furthermore, by discarding the elements of $\{1,\ldots,N\}$ that do not belong to $I$, and re-indexing, we may take
$$
I=\{1,\ldots,Q\}.
$$

Suppose first that (\ref{eqnrcase1}) holds.
Each of the $Q-1=P^2+1$ pairs $(\alpha_{i,Q},\beta_{i,Q})$ ($1\leq i<Q$) belongs to $(S\setminus T)\times (S\setminus T)$.
Since the number of $\hrel^T$-classes in $S\setminus T$ is precisely $P$, it follows by the Pigeonhole Principle that for some $1\leq i<j<Q$ we have
\begin{equation}
\label{eq10}
(\alpha_{i,Q},\alpha_{j,Q}),(\beta_{i,Q},\beta_{j,Q})\in\hrel^T,
\end{equation}
and write
\begin{equation}
\label{eq11} \alpha_{i,Q}=a\alpha_{j,Q},\ \beta_{i,Q}=\beta_{j,Q}b\
(a,b\in T).
\end{equation}
Now we have
$$
\begin{array}{rcll}
u_i &=& \alpha_{i,Q} u_Q \beta_{i,Q} & \mbox{(by (\ref{eq4b}))}\\
&=& a\alpha_{j,Q} u_Q \beta_{j,Q}b & \mbox{(by (\ref{eq11}))}\\
&=& a u_j b & \mbox{(by (\ref{eq4b}))},
\end{array}
$$
contradicting (\ref{eq1c}).

Suppose now that (\ref{eqnrcase2}) holds.
Again using the Pigeonhole Principle, this time applied to $Q-1$ elements $\alpha_{i,Q}\in S\setminus T$ ($1\leq i<Q$),
we see that
there exist $i,j$ ($1\leq i<j<Q$) such that
\begin{equation}
\label{eq7} (\alpha_{i,Q},\alpha_{j,Q})\in\hrel^T.
\end{equation}
Let $a\in T$ be such that
\begin{equation}
\label{eq8} \alpha_{i,Q}=a\alpha_{j,Q}.
\end{equation}
Now we have
$$
\begin{array}{rcll}
u_i &=& \alpha_{i,Q}u_Q\beta_{i,Q} & \mbox{(by (\ref{eq4b}))} \\
&=& a\alpha_{j,Q}u_Q\beta_{i,Q} & \mbox{(by (\ref{eq8}))} \\
&=& a\alpha_{j,Q} u_Q \beta_{j,Q} \beta_{i,j}  &
   \mbox{(by (\ref{eq3b}))} \\
&=& a u_j  \beta_{i,j} & \mbox{(by (\ref{eq4b})).}
\end{array}
$$
But from (\ref{eqnrcase2}) we have $\beta_{i,j}\in T$, and this contradicts (\ref{eq1c}).
Case (\ref{eqnrcase3}) can be eliminated by a dual argument, and the theorem is proved.
\end{proof}


\section{Minimal Conditions for Ideals}
\label{secmin}

Recall that a semigroup $S$ is said to have property $\min_R$ (respectively $\min_J$) if every descending chain
$R_{x_1}^S\geq R_{x_2}^S\geq R_{x_3}^S\geq\cdots$ (respectively $J_{x_1}^S\geq J_{x_2}^S\geq
J_{x_3}^S\geq \cdots$) of $\rrel$- (respectively $\jrel$-) classes of $S$ eventually stabilizes. 
Obviously $\min_R$ and
$\min_J$ are finiteness conditions.

\begin{thm}\label{th:min_R}
Let $S$ be a semigroup and let $T$ be a subsemigroup of $S$ with
finite Green index. Then $T$ satisfies $\min_R$ if and only if $S$ satisfies
$\min_R$.
\end{thm}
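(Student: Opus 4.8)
The plan is to prove each direction separately, in each case translating a descending chain of $\rrel$-classes in one semigroup into an associated chain in the other, and then using finite Green index to control how the two chains relate. Throughout I would assume, as elsewhere in the paper, that $S$ has an identity $1$ and that $1 \in T$; one checks easily that $\min_R$ holds in $S$ iff it holds in $S^1$, so this costs nothing.

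For the direction ($S$ satisfies $\min_R$ $\Rightarrow$ $T$ satisfies $\min_R$), suppose we have an infinite strictly descending chain $R^T_{t_1} > R^T_{t_2} > \cdots$ in $T$. Each strict inequality $R^T_{t_{i+1}} < R^T_{t_i}$ means $t_{i+1} \in t_i T^1 \subseteq t_i S^1$, so $R^S_{t_{i+1}} \leq R^S_{t_i}$ as $\rrel^S$-classes. Since $S$ satisfies $\min_R$, this $\rrel^S$-chain stabilises, so after discarding an initial segment we may assume all the $t_i$ are $\rrel^S$-equivalent while remaining pairwise $\rrel^T$-inequivalent. I would then use Proposition~\ref{basicproperties}(iii): $\rrel^T$-equivalence of elements that are $\rrel^S$-equivalent is controlled by the action on $\lrel^T$-classes inside a fixed $\rrel^S$-class, and finite Green index bounds the number of relevant relative classes. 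The goal is to derive that infinitely many of the $t_i$ must in fact be $\rrel^T$-related, contradicting strictness. The main obstacle here is that $\rrel^T$-equivalence requires multipliers in $T^1$, whereas $\rrel^S$-equivalence only gives multipliers in $S^1$; bridging this gap is exactly where finiteness of the number of $\rrel^T$- and $\lrel^T$-classes in $S \setminus T$ must be invoked, presumably via a pigeonhole argument on the elements of $S\setminus T$ used to pass between consecutive $t_i$.

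For the converse ($T$ satisfies $\min_R$ $\Rightarrow$ $S$ satisfies $\min_R$), suppose $R^S_{s_1} > R^S_{s_2} > \cdots$ is an infinite strictly descending chain in $S$. Since there are only finitely many $\rrel^T$-classes in $S \setminus T$, only finitely many of the $s_i$ can lie in $S \setminus T$ (an $\rrel^S$-class is a union of $\rrel^T$-classes, and strictness would otherwise force infinitely many distinct $\rrel^T$-classes outside $T$); so after discarding finitely many terms we may assume every $s_i \in T$. Now $R^S_{s_{i+1}} \leq R^S_{s_i}$ gives $s_{i+1} = s_i p_i$ for some $p_i \in S^1$, and the task is to replace these with elements of $T$ so as to obtain a descending chain of $\rrel^T$-classes. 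Here finite Green index enters again: by a pigeonhole/telescoping argument on products of the $p_i$ landing in $S \setminus T$ — analogous to the manipulations in the proofs of Lemmas~\ref{step1} and \ref{pr:aux} — one arranges that $s_{i+1} \in s_i T^1$ for infinitely many $i$, yielding an infinite descending $\rrel^T$-chain in $T$. The chain so obtained must be shown to be genuinely non-stabilising, which again reduces to separating $\rrel^T$ from $\rrel^S$; this is the crux of the argument, and I expect it to be the harder of the two directions.

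In both directions the essential difficulty, and the only place where more than routine bookkeeping is needed, is converting $\rrel^S$-relations into $\rrel^T$-relations (and vice versa) while preserving strictness of the chain. The lever for this is always the same: $S \setminus T$ meets only finitely many $\rrel^T$- and $\lrel^T$-classes, so any infinite family of products must eventually repeat a relative class, and a suitable multiplier in $T$ can then be extracted. I would structure the proof so that this pigeonhole step is isolated, ideally as a short internal claim, and applied symmetrically in each direction.
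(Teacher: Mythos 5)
Your opening reductions in both directions match the paper's, but the core of each direction is left as a gesture, and where you do commit to specifics they point at the wrong objects. In the direction where the chain $R_{t_1}^T>R_{t_2}^T>\cdots$ lies in $T$ (i.e.\ $S$ satisfies $\min_R$ implies $T$ does), after you reduce to all $t_i$ being $\rrel^S$-equivalent, a pigeonhole ``on the elements of $S\setminus T$ used to pass between consecutive $t_i$'' does not work: writing $t_i=t_{i+1}q_i$ with $q_i\in S$, if $q_i\hrel^T q_j$, say $q_i=q_jt$ with $t\in T$, you obtain $t_i=t_{i+1}q_jt$, which relates $t_i$ to $t_{i+1}q_j$ --- not a term of the chain --- and no contradiction results. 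The paper instead telescopes against a \emph{common anchor}: $t_i=t_{N+1}q_Nq_{N-1}\cdots q_i$ for all $i\leq N$; each cumulative product $q_N\cdots q_i$ must lie in $S\setminus T$ (otherwise $R_{t_i}^T\leq R_{t_{N+1}}^T$, contradicting strictness), so for $N$ large two of them are $\hrel^T$-related, whence $q_N\cdots q_i=(q_N\cdots q_j)t$ with $t\in T$, giving $t_i=t_jt$ and thus $R_{t_i}^T\leq R_{t_j}^T$ with $i<j$ --- the desired contradiction. Proposition~\ref{basicproperties}(iii), which you invoke, plays no role; all that is used is the pigeonhole on relative classes in $S\setminus T$ and the definition of $\rrel^T$.

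In the other direction (chain $R_{s_1}^S>R_{s_2}^S>\cdots$ in $S$, all $s_i\in T$ after your correct first reduction), your plan mislocates the crux. The paper writes $s_i=s_1p_2\cdots p_i$ and again telescopes: if every cumulative product $p_2\cdots p_i$ lay in $S\setminus T$, two would be $\rrel^T$-related, and since $\rrel^T$ is a left congruence this would force $s_i\rrel^S s_j$, contradicting strictness; hence some product lands in $T$, and iterating from that index produces a subsequence with $R_{s_{i_1}}^T\geq R_{s_{i_2}}^T\geq\cdots$. You then assert that this derived chain ``must be shown to be genuinely non-stabilising'' and expect this to be the hard part. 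No such step is needed, nor is one possible as a separate argument: one simply applies $\min_R$ in $T$ to get $R_{s_{i_k}}^T=R_{s_{i_{k+1}}}^T$ for some $k$, and since multipliers from $T^1$ are in particular multipliers from $S^1$, this yields $R_{s_{i_k}}^S=R_{s_{i_{k+1}}}^S$, contradicting the strictness of the original chain. The contradiction is extracted \emph{from} stabilisation, not by excluding it. As written, then, your proposal records the correct preliminary reductions but omits both key mechanisms: the anchored telescoping pigeonhole, and the observation that $\rrel^T\subseteq\rrel^S$ closes the argument.
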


\begin{proof}
Without loss of generality we may assume that $S$ has an
identity $1$ and $1\in T$.

($\Rightarrow$) 
Suppose $T$ satisfies $\min_R$, but that in $S$ we have an infinite decreasing chain
$R_{x_1}^S>R_{x_2}^S>R_{x_3}^S>\cdots$ of $\rrel$-classes.

If there are infinitely many elements from $S\setminus T$ among
$x_1,x_2,\ldots$, then there exist $i<j$ such that $x_i\rrel^T x_j$,
implying $x_i\rrel^S x_j$, a
contradiction. Hence there are only finitely many $i$ such that
$x_i\in S\setminus T$, and without loss of generality we may assume
that in fact $x_i\in T$ for all $i\geq 1$. 
Now, for every $n\geq 2$ there
exists $p_n\in S$ such that $x_{n-1}p_n=x_{n}$. Then $x_1\cdot
p_2\cdots p_i=x_{i}$ for all $i\geq 2$. If $p_2\cdots p_i\in
S\setminus T$ for all $i\geq 2$, then, 
since there are finitely many $\rrel^T$-classes in $S\setminus T$,
there would exist $1<i<j$ such
that $p_2\cdots p_i\rrel^T p_2\cdots p_j$ and so
$x_{1}=x_1p_2\cdots p_i\rrel^Sx_1p_2\cdots p_j=x_{j}$, a
contradiction. Hence there exists $i_2>i_1=1$ such that $p_2\cdots
p_{i_2}\in T$. Then $R_{x_{i_1}}^T\geq R_{x_{i_2}}^T$. Analogously,
there exists $i_3>i_2$ such that $R_{x_{i_2}}^T\geq R_{x_{i_3}}^T$.
Proceeding in this way, there exists an infinite sequence
$i_1<i_2<i_3<\cdots$ such that $R_{x_{i_1}}^T\geq R_{x_{i_2}}^T\geq
R_{x_{i_3}}^T\geq\cdots$. Since every $x_i$ lies in $T$ and $T$ satisfies
$\min_R$, we must have that $R_{x_{i_k}}^T=R_{x_{i_{k+1}}}^T$ for some
$k$. Then $R_{x_{i_k}}^S=R_{x_{i_{k+1}}}^S$, a contradiction.

($\Leftarrow$) 
Suppose $S$ satisfies $\min_R$, but that in $T$ we have an infinite descending chain
$R_{x_1}^T>R_{x_2}^T>R_{x_3}^T>\cdots$ where $x_i\in T$. 
Since
$R_{x_1}^S\geq R_{x_2}^S\geq R_{x_3}^S\geq\cdots$, we may assume
without loss of generality that $R_{x_n}^S=R_{x_{n+1}}^S$ for all
$n\geq 1$. Then for every $n\geq 1$ there exists $q_n\in S$ with
$x_{n+1}q_n=x_n$. Now,
\begin{equation*}
x_i=x_{i+1}q_i=\cdots=x_{n+1}q_n\cdots q_i
\end{equation*}
for all $1\leq i\leq n$. Hence $q_n\cdots q_i\in S\setminus T$ for
all $1\leq i\leq n$. Then there exist numbers $i<j<N$ such that
$q_N\cdots q_i\hrel^T q_N\cdots q_j$. In particular, there
exists $t\in T$ with $q_N\cdots q_i=q_N\cdots q_j\cdot t$. Then
\begin{equation*}
x_i=x_{N+1}q_N\cdots q_i=x_{N+1}q_N\cdots q_jt=x_jt,
\end{equation*}
a contradiction.
\end{proof}

\begin{remark}
The above proof does not use the full strength of the assumption that $T$ has finite Green index in $S$, i.e. that the number of $\hrel^T$-classes in $S\setminus T$ is finite, but only that there are finitely many $\rrel^T$-classes in $S\setminus T$.
\end{remark}

Now we will prove an analogue of Theorem~\ref{th:min_R} for
$\min_J$. For this we will require the following lemma.

\begin{lemma}\label{lm:chain}
Let $T$ be a subsemigroup of finite Green index in a semigroup $S$.
Let also $J_{x_1}^S>J_{x_2}^S>J_{x_3}^S>\cdots$ be an infinite descending chain
of $\jrel$-classes of $S$
where $x_i\in T$ for all $i\geq 1$. Then there is a sequence
$n_1<n_2<n_3<\cdots$ such that $J_{x_{n_1}}^T\geq J_{x_{n_2}}^T\geq
J_{x_{n_3}}^T\geq\cdots$.
\end{lemma}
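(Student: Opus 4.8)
The plan is to take the chain $J_{x_1}^S > J_{x_2}^S > \cdots$ with all $x_i \in T$, and extract the subsequence by tracking how the defining factorizations interact with $T$. Since $J_{x_n}^S > J_{x_{n+1}}^S$, for each $n$ there exist $p_n, q_n \in S$ with $x_{n+1} = p_n x_n q_n$. More usefully, since $J_{x_i}^S \geq J_{x_j}^S$ for $i \leq j$, I can write $x_j = \gamma_{i,j} x_i \delta_{i,j}$ for suitable $\gamma_{i,j}, \delta_{i,j} \in S$, built as products of the step-by-step multipliers (compare \eqref{eq3b}--\eqref{eq4b} in the proof of Theorem~\ref{finiteideals}). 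The goal is to find an infinite subsequence along which \emph{both} multipliers can be arranged to lie in $T$, since $J_{x_{n_k}}^T \geq J_{x_{n_{k+1}}}^T$ is exactly the statement that $x_{n_{k+1}} \in T^1 x_{n_k} T^1$.

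First I would note that each of the finitely many $\hrel^T$-classes of $S \setminus T$ refines into an $\rrel^T$-class and an $\lrel^T$-class, and that $T$ has finite Green index gives only finitely many $\rrel^T$- and $\lrel^T$-classes in $S\setminus T$. The key device, mirroring the $\min_R$ argument just above, is the following: fix an index $i$ and look at the left multipliers $\gamma_{i,j}$ (for $j > i$) as $j$ grows. If $\gamma_{i,j} \in S \setminus T$ for all large $j$, then by finiteness of $\rrel^T$-classes (or $\lrel^T$-classes) in $S\setminus T$ two of them are $\rrel^T$- (or $\lrel^T$-) related, which lets me cancel via a $T$-element exactly as in the $\min_R$ proof and thereby replace a multiplier-that-escaped-$T$ by a genuine $T$-multiplier between two terms of the chain. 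This produces a pair $x_a, x_b$ of the chain with $x_b \in T^1 x_a$ (or $x_a T^1$), i.e. one of the two relative preorders holding inside $T$.

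The main work is to make this happen for the two-sided $\jrel$-order simultaneously, i.e. to control the left multiplier and the right multiplier at once. The natural route is an iterative extraction: starting from $n_1 = 1$, use the finiteness of $\lrel^T$- and $\rrel^T$-classes in $S\setminus T$ to find $n_2 > n_1$ so that the connecting factorization $x_{n_2} = \gamma x_{n_1} \delta$ can be taken with $\gamma, \delta \in T$ (adjusting $\gamma$ on the left and $\delta$ on the right independently by the cancellation trick, absorbing the escaped factors into $T$-elements), giving $J_{x_{n_2}}^T \leq J_{x_{n_1}}^T$; then repeat from $n_2$ to obtain $n_3$, and so on. Each extraction step uses only a pigeonhole/finiteness argument on the relative classes, so the inductive construction yields the desired infinite sequence $n_1 < n_2 < \cdots$ with $J_{x_{n_1}}^T \geq J_{x_{n_2}}^T \geq \cdots$.

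The hard part will be handling the \emph{two-sided} constraint cleanly: when I cancel on the left to force $\gamma \in T$, I multiply $x_{n_1}$ by a new $T$-element, which changes the right factor and could push it out of $T$, and conversely; so the left and right reductions are not obviously independent. I expect to resolve this by first passing to a subsequence along which the escape pattern of the left multipliers is constant (one of ``eventually in $T$'' versus ``infinitely often outside $T$''), applying the cancellation trick to fix the left side across that whole subsequence, and only then repeating the same dichotomy for the right multipliers on the resulting thinner subsequence — so that the second reduction operates on factorizations whose left parts are already in $T$ and are not disturbed. Managing the bookkeeping of these nested subsequence extractions, and verifying that the cancellation identities survive intact (so that the final $\gamma, \delta$ genuinely lie in $T$ and still conjugate $x_{n_k}$ to $x_{n_{k+1}}$), is where the real care is needed; everything else is the same pigeonhole-on-relative-classes reasoning already used for $\min_R$.
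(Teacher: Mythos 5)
Your setup is the same as the paper's: write $x_{n+1}=p_nx_nq_n$, form the cumulative multipliers $p_{i,j}=p_{j-1}\cdots p_i$, $q_{i,j}=q_i\cdots q_{j-1}$ with $x_j=p_{i,j}x_iq_{i,j}$, make the membership pattern in $T$ versus $S\setminus T$ uniform, and then cancel using the finiteness of relative classes. You have also correctly identified where the difficulty lies. But your proposed resolution --- fix the left multipliers first, then fix the right multipliers on a thinner subsequence --- genuinely fails in the one case that carries all the difficulty: when \emph{both} families of cumulative multipliers lie in $S\setminus T$. The reason is that, unlike in the one-sided $\min_R$ argument (where there is no factor on the other side at all), every two-sided cancellation leaves a residual on the opposite side. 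Concretely: from $p_{i,k}=p_{j,k}p_{i,j}$ and $p_{i,k}\,\lrel^T\,p_{i,j}$ you get $p_{i,k}=tp_{i,j}$ with $t\in T^1$, hence $x_k=t\,x_j\,q_{j,k}$ --- the residual $q_{j,k}$ lands on the \emph{right}; dually, cancelling on the right gives $x_k=p_{j,k}\,x_j\,t'$ with the residual $p_{j,k}$ on the \emph{left}. (So your claim that the cancellation ``produces a pair $x_a,x_b$ with $x_b\in T^1x_a$'' is already not what the trick yields.) Doing the two stages in sequence does not compound: after stage one you hold factorizations $x_k=\tau_{i,k}x_iq_{i,k}$ with $\tau_{i,k}\in T^1$, but the right-hand cancellation $q_{i,k}=q_{i,j}t'$ must recombine through the common base point $x_i$, and $\tau_{i,k}x_iq_{i,j}$ cannot be rewritten in terms of $x_j=\tau_{i,j}x_iq_{i,j}$ because $\tau_{i,k}\neq\tau_{i,j}$; equivalently, in the original system it recombines to $x_k=p_{j,k}x_jt'$, whose left factor has escaped $T$ again. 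Each reduction's residual sits exactly on the side the other stage was supposed to have fixed.

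The missing idea is that in the both-escape case one should not attempt a forward descent at all: that case is \emph{impossible}, and the proof must derive a contradiction from the strictness of the chain in $S$. This is what the paper does. Fix a base index $N$ and apply the pigeonhole principle to the \emph{pairs} $(p_{N,i},q_{N,i})$, $i>N$ --- there are only finitely many pairs of $\hrel^T$-classes in $(S\setminus T)\times(S\setminus T)$ --- to find $i<j$ with $p_{N,i}\,\hrel^T\,p_{N,j}$ and $q_{N,i}\,\hrel^T\,q_{N,j}$ \emph{simultaneously}. Since $\hrel^T$ is symmetric, one may express the \emph{earlier} multipliers through the \emph{later} ones, $p_{N,i}=t_1p_{N,j}$ and $q_{N,i}=q_{N,j}t_2$ with $t_1,t_2\in T^1$; now both residuals recombine exactly, giving $x_i=t_1p_{N,j}x_Nq_{N,j}t_2=t_1x_jt_2$, whence $J_{x_i}^S\leq J_{x_j}^S$ with $i<j$, contradicting $J_{x_i}^S>J_{x_j}^S$. (Two further points of contrast: the paper uses Ramsey's theorem on pairs, not a per-column dichotomy, to make the membership pattern uniform over all $i<j$; and in the mixed case, say $p$'s escaping but all $q_{i,j}\in T$, the consecutive-index cancellation $p_{n,n+2}\,\hrel^T\,p_{n,n+1}$ yields $x_{n+2}=t_{n+1}x_{n+1}q_{n+1}\in T^1x_{n+1}T$, the residual single-step $q_{n+1}$ being automatically in $T$ --- this is where your two-stage idea does work.) Without the simultaneous pigeonhole and the reversed, contradiction-producing cancellation, your outline cannot be completed.
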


\begin{proof}
For each $n\geq 1$ there exist $p_n,q_n\in S$ such that
$x_{n+1}=p_nx_nq_n$. Define $p_{i,j}=p_{j-1}\cdots p_i$ and
$q_{i,j}=q_i\cdots q_{j-1}$ for all $1\leq i<j$. Then
$x_j=p_{i,j}x_iq_{i,j}$ for all $1\leq i<j$. By Ramsey's Theorem
there exists an infinite subset $I\subseteq\mathbb{N}$ such that
$p_{i,j}\in T$ for all $i,j\in I$ with $i<j$, or $p_{i,j}\in
S\setminus T$ for all $i,j\in I$ with $i<j$; and $q_{i,j}\in T$ for
all $i,j\in I$ with $i<j$, or $q_{i,j}\in S\setminus T$ for all
$i,j\in I$ with $i<j$. By renumbering, without loss of generality we
may assume that $I=\mathbb{N}$. If all $p_{i,j}$ and $q_{i,j}$ are
from $T$, then $J_{x_1}^T\geq J_{x_2}^T\geq J_{x_3}^T\geq\cdots$ and
we are done. Hence suppose that all $p_{i,j}$ are from $S\setminus T$
(the case when
all $q_{i,j}$ are from $S\setminus T$ being analogous). 
Now consider two possible cases:

\vspace{\baselineskip}

\textbf{Case 1:} \emph{$q_{i,j}\in T$ for all $1\leq i<j$.} By
Ramsey's Theorem there exists an infinite subset
$J\subseteq\mathbb{N}$ such that all the $p_{i,j}$ with $i,j\in J$ and $i<j$ 
lie in the same $\hrel^T$-class. After renumbering
we may assume that $J=\mathbb{N}$. Then, in particular,
$p_{n+1}p_n=p_{n,n+2}\hrel^T p_{n,n+1}=p_n$ for all $n\geq 1$.
Hence there exists $t_{n+1}\in T$ such that $p_{n+1}p_n=t_{n+1}p_n$.
Then
\begin{equation*}
x_{n+2}=p_{n+1}p_nx_nq_nq_{n+1}=t_{n+1}p_nx_nq_nq_{n+1}=t_{n+1}x_{n+1}q_{n+1}\in
Tx_{n+1}T
\end{equation*}
for all $n\geq 1$. Therefore $J_{x_2}^T\geq J_{x_3}^T\geq
J_{x_4}^T\geq\cdots$.

\vspace{\baselineskip}

\textbf{Case 2:} \emph{$q_{i,j}\in S\setminus T$ for all $1\leq
i<j$.} By the Pigeonhole Principle there exist numbers $N<i<j$ such
that $p_{N,i}\hrel^T p_{N,j}$ and $q_{N,i}\hrel^T
q_{N,j}$. Then there exist $t_1,t_2\in T$ such that
$p_{N,i}=t_1p_{N,j}$ and $q_{N,i}=q_{N,j}t_2$. Then
\begin{equation*}
x_i=p_{N,i}x_Nq_{N,i}=t_1p_{N,j}x_Nq_{N,j}t_2=t_1x_jt_2,
\end{equation*}
and so $J_{x_i}^S=J_{x_j}^S$, a contradiction.
\end{proof}

\begin{thm}\label{th:min_J}
Let $S$ be a semigroup and let $T$ be a subsemigroup of $S$ with
finite Green index. Then $T$ satisfies $\min_J$ if and only if $S$ satisfies
$\min_J$.
\end{thm}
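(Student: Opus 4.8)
The plan is to prove Theorem~\ref{th:min_J} by mirroring the structure of the proof of Theorem~\ref{th:min_R}, using Lemma~\ref{lm:chain} as the key tool that replaces the elementary chain-extraction argument used in the $\rrel$ case. As usual I would assume without loss of generality that $S$ has an identity $1$ with $1 \in T$.

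For the forward direction ($T$ satisfies $\min_J \Rightarrow S$ satisfies $\min_J$), suppose for contradiction that $S$ has an infinite strictly descending chain $J_{x_1}^S > J_{x_2}^S > \cdots$. First I would dispose of the elements of $S \setminus T$: since there are only finitely many $\jrel^T$-classes in $S \setminus T$ (finite Green index forces finitely many $\rrel^T$- and $\lrel^T$-classes, hence finitely many $\jrel^T$-classes), if infinitely many $x_i$ lay in $S \setminus T$ then two of them, say $x_i$ and $x_j$ with $i < j$, would satisfy $x_i \jrel^T x_j$, giving $J_{x_i}^S = J_{x_j}^S$ and contradicting strictness. So we may assume all $x_i \in T$. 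Now Lemma~\ref{lm:chain} applies directly and yields a subsequence $n_1 < n_2 < \cdots$ with $J_{x_{n_1}}^T \geq J_{x_{n_2}}^T \geq \cdots$. Since $T$ satisfies $\min_J$, this weakly descending chain of $\jrel^T$-classes must stabilize: $J_{x_{n_k}}^T = J_{x_{n_{k+1}}}^T$ for some $k$. But then $J_{x_{n_k}}^S = J_{x_{n_{k+1}}}^S$, contradicting that the original $S$-chain was strictly descending. This contradiction completes the forward direction.

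For the converse ($S$ satisfies $\min_J \Rightarrow T$ satisfies $\min_J$), suppose $T$ has an infinite strictly descending chain $J_{x_1}^T > J_{x_2}^T > \cdots$ with all $x_i \in T$. Passing to $S$ gives the weakly descending chain $J_{x_1}^S \geq J_{x_2}^S \geq \cdots$, which by $\min_J$ in $S$ eventually stabilizes; discarding an initial segment, we may assume $J_{x_n}^S = J_{x_{n+1}}^S$ for all $n$, i.e.\ all the $x_i$ are $\jrel^S$-equivalent. The plan here is to run the Ramsey-plus-pigeonhole machinery exactly as in Lemma~\ref{lm:chain} (or equivalently as in the $\Leftarrow$ direction of Theorem~\ref{th:min_R} and the proof of Theorem~\ref{finiteideals}): writing $x_{n+1} = p_n x_n q_n$ and setting $p_{i,j} = p_{j-1}\cdots p_i$, $q_{i,j} = q_i \cdots q_{j-1}$ so that $x_j = p_{i,j} x_i q_{i,j}$, apply Ramsey's Theorem to two-colour each of the $p$- and $q$-families as lying in $T$ or in $S\setminus T$, and within a monochromatic set use the Pigeonhole Principle on $\hrel^T$-classes to find indices $i < j$ with $p_{N,i} \,\hrel^T\, p_{N,j}$ and $q_{N,i} \,\hrel^T\, q_{N,j}$. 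Absorbing the resulting elements $t_1, t_2 \in T$ gives $x_i = t_1 x_j t_2 \in T x_j T$, hence $J_{x_i}^T = J_{x_j}^T$, contradicting strictness.

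The main obstacle in the converse is the case analysis over how many of the connecting elements $p_{i,j}, q_{i,j}$ fall outside $T$: one cannot in general force a single pigeonhole collision unless one first pins down which of the two families is problematic, and the $T$-valued family must be handled separately (as in Case~1 of Lemma~\ref{lm:chain}, where one telescopes $p_{n+1}p_n = t_{n+1} p_n$ to stay inside $Tx_{n+1}T$). Rather than redo this, the cleaner route is to observe that the converse is essentially the contrapositive of Lemma~\ref{lm:chain}'s converse content combined with the same extraction argument; indeed the bookkeeping is already isolated in Lemma~\ref{lm:chain}, so for the forward direction one simply invokes it as a black box, and for the converse one reuses its internal argument to derive the contradiction $J_{x_i}^T = J_{x_j}^T$. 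The only genuine subtlety is ensuring that in the converse we have arranged all $x_i$ to be $\jrel^S$-equivalent before applying the combinatorics, which is exactly what $\min_J$ in $S$ secures.
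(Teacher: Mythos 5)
Your forward direction is correct and is exactly the paper's argument: discard the (at most finitely many) $x_i \in S\setminus T$, invoke Lemma~\ref{lm:chain}, and let $\min_J$ in $T$ stabilize the extracted chain. The problem is in the converse, and it is not a bookkeeping issue but a directional one. Having arranged $J_{x_n}^S = J_{x_{n+1}}^S$ for all $n$, you write $x_{n+1} = p_n x_n q_n$, i.e.\ you express \emph{later} terms through \emph{earlier} ones, so that $x_j = p_{i,j}\, x_i\, q_{i,j}$ for $i<j$. With this orientation the statement ``$p_{i,j}, q_{i,j} \in T$'' carries no information: it only says $J_{x_j}^T \leq J_{x_i}^T$, which is already part of the hypothesis that the $T$-chain is strictly descending. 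Indeed, since $J_{x_{n+1}}^T < J_{x_n}^T$ means $x_{n+1} \in T^1 x_n T^1$, you could legitimately choose every $p_n, q_n$ inside $T^1$, and then your entire Ramsey-plus-pigeonhole machinery terminates in the colour class ``everything in $T$'' and produces no contradiction whatsoever. Your pigeonhole step only fires in the single Ramsey case where \emph{both} families $p_{i,j}$ and $q_{i,j}$ lie in $S\setminus T$, and nothing in your setup rules out the other three cases; the telescoping of Case~1 of Lemma~\ref{lm:chain}, which you offer as the fix, only yields $x_{n+2} \in T x_{n+1} T$, i.e.\ again $J_{x_{n+2}}^T \leq J_{x_{n+1}}^T$ --- not a contradiction here (in the lemma it is not a contradiction either; there it is the desired \emph{conclusion}, which is why it cannot be recycled as ``contrapositive content'' the way you suggest; note also that the lemma's hypothesis, a strictly descending $S$-chain, fails in the converse setting, so it cannot be invoked at all).

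The paper orients the factorisations the other way: $x_n = p_n x_{n+1} q_n$, so that $x_i = p_{i,j}\, x_j\, q_{i,j}$ for $i<j$ --- the \emph{earlier} element through the \emph{later} one. This is precisely the direction that is only available because the $\jrel^S$-classes coincide, and it is what makes the combinatorics bite: if both $p_{i,j}$ and $q_{i,j}$ lie in $T$ you get $x_i \in T x_j T$, hence $J_{x_i}^T \leq J_{x_j}^T$, an immediate contradiction with strict descent. Consequently at least one of the two multipliers is in $S\setminus T$ for \emph{every} pair, Ramsey lets you assume (say) all $p_{i,j} \in S\setminus T$ and lying in a single $\hrel^T$-class, and then the telescoping identity $p_{i,j}p_j = p_{i,j+1} \hrel^T p_j$ gives $x_i = t x_j q_{i,j}$ with $t \in T$, which \emph{forces} $q_{i,j} \in S\setminus T$ as well; only then does the double pigeonhole $p_{i,N} \hrel^T p_{j,N}$, $q_{i,N} \hrel^T q_{j,N}$ produce $x_i = t_1 x_j t_2 \in T x_j T$ and the final contradiction. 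So your converse needs to be rebuilt with the factorisations reversed; as written, it fails.
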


\begin{proof}
Without loss we may assume that $S$ has an identity $1$ and that
$1\in T$.

($\Rightarrow$) Suppose $T$ satisfies $\min_J$, but in $S$ we have
$J_{x_1}^S>J_{x_2}^S>J_{x_3}^S>\cdots$ for some $x_i\in S$. 
As in the
proof of Theorem~\ref{th:min_R} we may assume that $x_i\in T$ for
all $i\geq 1$. By Lemma~\ref{lm:chain} there exists a sequence
$n_1<n_2<n_3<\cdots$ such that $J_{x_{n_1}}^T\geq J_{x_{n_2}}^T\geq
J_{x_{n_3}}^T\geq\cdots$. Therefore $J_{x_k}^T=J_{x_{k+1}}^T$ for some
$k$. Then $J_{x_k}^S=J_{x_{k+1}}^S$, a contradiction.

($\Leftarrow$) Suppose $S$ satisfies $\min_J$, but
$J_{x_1}^T>J_{x_2}^T>J_{x_3}^T>\cdots$ for some $x_i\in T$. As in the
proof of Theorem~\ref{th:min_R} we may assume that
$J_{x_n}^S=J_{x_{n+1}}^S$ for all $n\geq 1$. Then for each $n\geq 1$
there exist $p_n,q_n\in S$ such that $x_n=p_nx_{n+1}q_n$. Define
$p_{i,j}=p_i\cdots p_{j-1}$ and $q_{i,j}=q_{j-1}\cdots q_i$ for all
$1\leq i<j$. Then $x_i=p_{i,j}x_jq_{i,j}$ for all $1\leq i<j$. It
follows that for every $i<j$, either $p_{i,j}\in S\setminus T$, or
$q_{i,j}\in S\setminus T$. By Ramsey's Theorem and up to
renumbering, we may assume that $p_{i,j}\in S\setminus T$ for all
$i<j$. Furthermore, we may even assume that all of $p_{i,j}$ lie in
the same $\hrel^T$-class.

Take arbitrary $i<j$. Then $p_{i,j}p_j=p_{i,j+1}\hrel^T
p_{j,j+1}=p_j$ and so there exists $t\in T$ such that
$p_{i,j}p_j=tp_j$. Then
$x_i=p_{i,j}p_jx_{j+1}q_jq_{i,j}=tp_jx_{j+1}q_jq_{i,j}=tx_jq_{i,j}$
and so $q_{i,j}\in S\setminus T$.

Now, by the Pigeonhole Principle there exist numbers $i<j<N$ such
that $p_{i,N}\hrel^T p_{j,N}$ and $q_{i,N}\hrel^T
q_{j,N}$. Therefore there exist $t_1,t_2\in T$ such that
$p_{i,N}=t_1p_{j,N}$ and $q_{i,N}=q_{j,N}t_2$. Then
\begin{equation*}
x_i=p_{i,N}x_Nq_{i,N}=t_1p_{j,N}x_Nq_{j,N}t_2=t_1x_jt_2,
\end{equation*}
a contradiction. This proves the theorem.
\end{proof}

Another natural finiteness condition, related to (and weaker than) $\min_J$ is that of having a minimal two-sided ideal. The following result is easy to prove, but we include it for completeness:

\begin{prop}\label{pr:obvious}
Let $T$ be a subsemigroup of finite Green index in a semigroup $S$.
If $T$ has a minimal ideal, then $S$ has a minimal ideal.
\end{prop}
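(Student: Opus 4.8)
The plan is to produce a single $\jrel^S$-class that is minimal with respect to $\leq_\jrel$, since such a class is immediately a minimal ideal of $S$. Indeed, if $J_0$ is a $\leq_\jrel$-minimal $\jrel^S$-class and $a\in J_0$, then $S^1aS^1=J_0$: any $b\in S^1aS^1$ satisfies $J_b^S\le J_0$, hence $J_b^S=J_0$ by minimality. Thus $J_0$ is an ideal, and running the same argument on an arbitrary element of any ideal $N\subseteq J_0$ shows $S^1nS^1=J_0$ for every $n\in N$, so $N=J_0$; that is, $J_0$ has no proper subideal and is therefore a minimal ideal of $S$. So the whole task reduces to exhibiting a minimal $\jrel^S$-class.

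To locate one, let $K$ be the minimal ideal of $T$ and fix $z\in K$. The first observation I would make is that \emph{no element of $T$ lies strictly $\jrel^S$-below $z$}: if $s\in T$, then since $K$ is contained in every ideal of $T$ we have $z\in K\subseteq T^1sT^1\subseteq S^1sS^1$, so $z\le_{\jrel^S}s$. Consequently every $\jrel^S$-class lying strictly below $J_z^S$ must be contained entirely in $S\setminus T$.

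Next I would bring in finiteness. Because $u\,\rrel^T v$ implies $u\,\rrel^S v$ and hence $u\,\jrel^S v$, we have $\rrel^T\subseteq\jrel^S$, so every $\jrel^S$-class is a union of relative $\rrel^T$-classes. Finite Green index guarantees only finitely many $\rrel^T$-classes inside $S\setminus T$, whence only finitely many $\jrel^S$-classes meet $S\setminus T$ at all. Combining this with the previous paragraph, the down-set $\set{J\in S/\jrel^S}{J\le J_z^S}$ is finite (it consists of $J_z^S$ together with certain classes that meet $S\setminus T$). A nonempty finite subposet has a minimal element $J_0$; and any $\jrel^S$-class strictly below $J_0$ would also lie below $J_z^S$ and hence belong to this finite down-set, contradicting the minimality of $J_0$ within it. Therefore $J_0$ is $\leq_\jrel$-minimal in all of $S$, which by the first paragraph gives a minimal ideal of $S$.

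The crux — and the step I expect to require the most care — is the finiteness of the down-set of $J_z^S$, which rests on two independent ingredients pulling in the same direction: minimality of $K$ rules out any $T$-element strictly below $z$, while finite Green index rules out infinitely many $\jrel^S$-classes among $S\setminus T$. Neither ingredient by itself is enough, and it is precisely their combination (rather than any explicit construction of the minimal ideal of $S$) that drives the argument.
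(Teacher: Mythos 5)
Your proof is correct, and it takes a genuinely different route from the paper's, even though both ultimately rest on the same two ingredients: the minimal ideal $K$ of $T$ is contained in every ideal of $T$ (so no element of $T$ can lie strictly $\jrel^S$-below an element of $K$), and finite Green index bounds the number of relative classes in $S\setminus T$. The paper argues by contradiction: assuming $S$ has no minimal ideal, it constructs an infinite strictly descending chain $J_x^S > J_{x_1}^S > J_{x_2}^S > \cdots$ starting at $x$ in the minimal ideal of $T$, invokes the chain-handling reduction from Theorem \ref{th:min_R} (pigeonhole on the finitely many $\hrel^T$-classes in $S\setminus T$) to assume all $x_i$ lie in $T$, and then contradicts minimality via $J_x^T \leq J_{x_1}^T$, hence $J_x^S \leq J_{x_1}^S$. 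You argue directly: minimality of $K$ forces every class strictly below $J_z^S$ to lie wholly in $S\setminus T$, finite Green index ensures only finitely many $\jrel^S$-classes meet $S\setminus T$, so the down-set of $J_z^S$ is finite and has a minimal element, which is then minimal among all $\jrel^S$-classes and hence a minimal ideal. Your arrangement buys two things: it avoids the (implicit) dependent-choice construction of the infinite descending chain, and it makes explicit the folklore equivalence between minimal $\jrel$-classes and minimal ideals, which the paper uses silently when it converts ``$S$ has no minimal ideal'' into the existence of such a chain. The paper's proof is shorter precisely because it leans on that equivalence and reuses the reduction already carried out for Theorem \ref{th:min_R}.
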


\begin{proof}
Let $I$ be a minimal ideal in $T$ and assume that $S$ does not have
a minimal ideal. Take any $x\in I$. Then there exists an infinite
chain $J_{x}^S>J_{x_1}^S>J_{x_2}^S>\cdots$ where $x_i\in S$. As in
the proof of Theorem~\ref{th:min_R} we may assume that $x_i\in T$
for all $T$. Now, $J_{x_1}^T\geq J_{x}^T$ and so $J_{x_1}^S\geq
J_{x}^S$, a contradiction.
\end{proof}

The converse of Proposition \ref{pr:obvious} does not hold. Indeed, if $T$ is any semigroup, the
semigroup $S=T^0$, obtained by adjoining a zero element to $T$,
has $T$ as a subsemigroup of finite Green (and indeed Rees) index, and has $\{0\}$ as its minimal ideal.


\section{All Ideals Have Finite Rees Index}
\label{secfinRees}

In this section we present a result which gives a positive answer to \cite[Open Problem 11.3(ii)]{ruskuc1}. 

\begin{thm}\label{rees}
Let $S$ be a semigroup and let $T$ be a subsemigroup of $S$ with
finite Rees index. If every ideal in $S$ has finite Rees index, then
every ideal in $T$ has finite Rees index.
\end{thm}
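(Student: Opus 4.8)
The plan is to work with the ideal of $S$ generated by a given ideal $J$ of $T$ and to show that it exceeds $J$ only by a finite set. As in the previous sections I assume $S$ has an identity $1\in T$, and I write $U=S\setminus T$, a finite set. Let $J$ be an ideal of $T$ and set $\langle J\rangle_S=S^1JS^1$, the ideal of $S$ generated by $J$. By hypothesis $\langle J\rangle_S$ has finite Rees index in $S$, so $T\setminus\langle J\rangle_S\subseteq S\setminus\langle J\rangle_S$ is finite. Since $J\subseteq\langle J\rangle_S\cap T$, there is a disjoint decomposition
\[
T\setminus J=\bigl(T\setminus\langle J\rangle_S\bigr)\ \sqcup\ \bigl((\langle J\rangle_S\cap T)\setminus J\bigr),
\]
so it suffices to prove that the \emph{defect} $(\langle J\rangle_S\cap T)\setminus J$ is finite.

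Next I would exploit the finiteness of $U$ to localise the defect. Using $S^1=T^1\cup U$ together with $T^1JT^1=J$ one obtains
\[
\langle J\rangle_S=J\cup JU\cup UJ\cup UJU,
\]
whence $(\langle J\rangle_S\cap T)\setminus J\subseteq(JU\cup UJ\cup UJU)\cap T$. As $U$ is finite this is a finite union of ``slices'' $Ju\cap T$, $uJ\cap T$ $(u\in U)$ and $u_1Ju_2\cap T$ $(u_1,u_2\in U)$, and the problem reduces to showing that each slice meets $T\setminus J$ in only finitely many elements. By left--right duality it is enough to treat $Ju\cap T$ and $u_1Ju_2\cap T$.

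The genuinely hard part is this last finiteness, and it is here that the full force of the hypothesis (not merely $|U|<\infty$) must be used. Two warnings show why. First, finiteness of $U$ alone is not enough: if $w=ju\in T\setminus J$, then for every $s\in T$ we have $sw=(sj)u\in Ju$, so the entire left ideal $T^1w$ is again contained in the slice $Ju\cap T$; thus a single bad element threatens to drag an infinite set into the defect, and this possibility must be excluded. Second, the naive device of passing to ``the largest ideal of $S$ disjoint from $T\setminus J$'' is useless here: under the hypothesis every principal ideal $S^1sS^1$ is cofinite (being an ideal of $S$), so every such ideal meets the infinite set $T\setminus J$, and hence the candidate ideal $\{s\in S:S^1sS^1\cap(T\setminus J)=\varnothing\}$ is empty. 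Consequently the cofiniteness of ideals cannot be transported by one clean complementary-ideal construction; it has to be propagated from $S$ to $T$ quantitatively.

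Accordingly, the step I expect to be the main obstacle is the finiteness of the slices, which I would attack by a descent/pigeonhole argument across the finite set $U$ in the spirit of Lemma~\ref{lm:chain}. Assuming a slice, say $Ju\cap T$, contained infinitely many elements of $T\setminus J$, I would apply the hypothesis to these elements (and to the relevant factors $j\in J$ and $u\in U$) to force, among the corresponding principal ideals $S^1wS^1$, two bad elements $w,w'$ to lie in a common principal ideal of $S$ via a flanking pair drawn from $T$; using $T^1JT^1=J$ this would return one of them to $J$, the desired contradiction. Making the bookkeeping of the finitely many $U$-factors cooperate with the cofiniteness supplied by the hypothesis is the technical heart of the argument.
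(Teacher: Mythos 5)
Your opening reduction is correct as far as it goes: passing to $\langle J\rangle_S=S^1JS^1$, using the hypothesis to make $T\setminus\langle J\rangle_S$ finite, and writing the defect as a union of the finitely many slices $Ju\cap T$, $uJ\cap T$, $u_1Ju_2\cap T$ is all fine. But the proof stops exactly where the theorem lives: the finiteness of the slices is deferred as the ``main obstacle,'' and the mechanism you sketch for it does not work. Suppose $w=ju$ and $w'=j'u$ are two elements of $Ju\cap(T\setminus J)$ and you produce a flanking pair $t_1,t_2\in T^1$ with $w'=t_1wt_2$. Then $w'=(t_1j)(ut_2)$, and the identity $T^1JT^1=J$ is inapplicable because the factor $u\in S\setminus T$ sits between $t_1j\in J$ and $t_2$: you may conclude $w'\in J$ only when $ut_2\in T$ (so that $w'\in J\cdot T\subseteq J$), whereas when $ut_2\in S\setminus T$ you have merely landed in another slice, and nothing in your plan excludes that this happens for every available flanking pair. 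So the contradiction never materializes; the ``bookkeeping'' you postpone is not bookkeeping but the entire difficulty, and your own first warning (that one bad element drags all of $T^1w$ into the slice) already shows that pigeonhole pressure on $U$-factors alone cannot settle it.

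The paper sidesteps this with an extremal idea that is absent from your proposal. For an ideal $I$ of $T$ and $F=S\setminus T$ it sets $X_i=\{f\in F: fi\in I\}$ and $Y_i=\{f\in F: if\in I\}$, and chooses $i_1,i_2\in I$ with $|X_{i_1}|$, $|Y_{i_2}|$ maximal. Maximality forces $Fi_1I\subseteq F\cup I$: if $fi_1j\in T\setminus I$ for some $f\in F$, $j\in I$, then $f\notin X_{i_1}$ while $f\in X_{i_1jj_1}$ for every $j_1\in I$, so $X_{i_1jj_1}\supseteq X_{i_1}\cup\{f\}\supsetneq X_{i_1}$, a contradiction; dually $Ii_2F\subseteq F\cup I$. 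Then, instead of the ideal generated by all of $I$, one takes the single principal ideal $S^1i_1i_2S^1$, which by this containment satisfies
\[
S^1i_1i_2S^1\subseteq Fi_1i_2F\cup I\cup F,
\]
i.e.\ it exceeds $I$ by a finite set; since by hypothesis $S^1i_1i_2S^1$ is cofinite in $S$, it follows that $I$ is cofinite in $S$ and hence in $T$. In short, the correct move is not to control every slice of $S^1JS^1$ but to manufacture one well-chosen principal ideal whose slices are harmless by construction. To repair your argument you would need to supply precisely this (or an equivalent) idea; as written, your proposal reduces Theorem~\ref{rees} to a claim that is essentially the theorem itself and then attacks it with a step that fails.
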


\begin{proof}
Suppose that every ideal in $S$ has finite Rees index.
Let $I$ be any ideal in $T$, and set $F = S \setminus T$.
For $i\in I$ define two sets
$$
X_i=\{ f\in F\::\: fi\in I\},\ Y_i=\{ f\in F\::\: if\in I\}.
$$
Let $i_1$ (resp. $i_2$) be any element of $I$ such that the set
$X_{i_1}$ (resp. $Y_{i_2}$) has the maximal possible size.

We claim that
$$
Fi_1I\subseteq F\cup I,\ Ii_2F\subseteq F\cup I.
$$
It suffices to prove the first inclusion; the second is dual.
Suppose that there exist $f\in F$, $j\in I$ such that $fi_1 j\in T\setminus I$.
Then $f\not\in X_{i_1}$ and
for any $j_1\in I$ we have $ fi_1jj_1\in I$.
This implies $X_{i_1jj_1}\supseteq X_{i_1}\cup \{f\}$, contradicting the choice of $i_1$.

Consider now the ideal $J=S^1 i_1i_2 S^1$ of $S$. We have
\begin{eqnarray*}
S^1 i_1i_2 S^1 &=& (F\cup T^1)i_1i_2(F\cup T^1)=
Fi_1i_2F \cup T^1i_1i_2 F \cup Fi_1i_2T^1 \cup T^1i_1i_2T^1\\
&\subseteq& Fi_1i_2F\cup Ii_2F\cup Fi_1I\cup II\subseteq
Fi_1i_2F\cup I\cup F.
\end{eqnarray*}
Note that the set $Fi_1i_2F$ is finite, and so
$I\cap J$ has finite index in $J$.
By assumption $J$ has finite Rees index in $S$. It follows that $I$ has finite Rees index in $S$, and hence in $T$ as well.
\end{proof}

\begin{remark}
\label{remark1}
The converse of Theorem \ref{rees} does not hold: Adjoining a zero to \emph{any} infinite semigroup $S$ results in a semigroup with an ideal (namely $\{0\}$) of infinite Rees index.  
Thus a counterexample to the converse of Theorem \ref{rees} may be obtained by taking an infinite semigroup whose ideals all have finite Rees index (e.g. an infinite group) and adjoining a zero. 
\end{remark}

The analogue of Theorem~\ref{rees} for right ideals also holds:
assume that $T$ is a subsemigroup of finite Rees index in $S$ and
that every right ideal in $S$ has finite Rees index in $S$. Let $R$
be a right ideal in $T$. Take any $r\in R$. Then $rS=rT\cup
r(S\setminus T)\subseteq R\cup r(S\setminus T)$ and the complement
of $rS$ in $S$ must be finite. Hence $T\setminus R$ is finite and so
$R$ has finite Rees index in $T$.

\begin{que}
Can the assumption of $T$ having finite Rees index in Theorem \ref{rees} be weakened to finite Green index?
In other words: if every ideal of $S$ has finite Rees index and if $T$ is a subsemigroup of $S$ of finite Green index, is it necessarily the case that every ideal of $T$ has finite Rees index?
\end{que}

To finish off this section, we prove the following proposition about
the related finiteness condition of every subsemigroup having finite
Green index:

\begin{prop}
If every subsemigroup of a semigroup $S$ has finite Green index in
$S$, then $S$ is finite.
\end{prop}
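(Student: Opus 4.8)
The plan is to assume $S$ is infinite and derive a contradiction, organising everything around one key lemma together with a dichotomy on the orders of elements. The key lemma I would isolate is: \emph{a finite subsemigroup of finite Green index forces $S$ to be finite}. Granting this, the whole argument reduces to showing that, under the hypothesis, $S$ must contain a finite subsemigroup.

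To prove the key lemma I would bound the size of each relative class. Fix an $\hrel^T$-class $H$ and any $u \in H$. Since $\hrel^T \subseteq \rrel^T$, every $v \in H$ satisfies $v \,\rrel^T\, u$, i.e.\ $vT^1 = uT^1$; as $v = v\cdot 1 \in vT^1$, this gives $v \in uT^1$, whence $H \subseteq uT^1$ and so $|H| \le |T^1|$. When $T$ is finite this is a finite bound, so every $\hrel^T$-class is finite. By the definition of finite Green index there are only finitely many $\hrel^T$-classes in $S \setminus T$, so $S\setminus T$ is a finite union of finite sets; combined with $|T|<\infty$ this yields $|S|<\infty$.

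Next I would dispose of the case in which some $a \in S$ has \emph{infinite order}, meaning the powers $a, a^2, a^3, \dots$ are pairwise distinct. Here I would test the hypothesis on $T = \genset{a^2} = \{a^{2k} : k \ge 1\}$. Every odd power $a^{2i+1}$ lies in $S\setminus T$, and since $T$ has no identity of its own, $T^1 = T \cup \{1\}$ and a direct computation gives $a^{2i+1}T^1 = \{a^{2i+1}, a^{2i+3}, a^{2i+5}, \dots\}$. These right translates form a strictly decreasing chain $aT^1 \supsetneq a^3T^1 \supsetneq a^5T^1 \supsetneq \cdots$, so the odd powers lie in pairwise distinct $\rrel^T$-classes. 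Thus $S\setminus T$ contains infinitely many $\rrel^T$-classes and $T$ has infinite Green index, contradicting the hypothesis. Hence no element of $S$ can have infinite order. Picking any $a \in S$, the monogenic subsemigroup $\genset{a} = \{a, a^2, a^3, \dots\}$ is therefore finite, and by hypothesis it has finite Green index; applying the key lemma with $T = \genset{a}$ forces $S$ to be finite, the desired contradiction.

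I expect the main obstacle to be the key lemma, specifically the realisation that finite Green index, which a priori controls only the \emph{number} of $\hrel^T$-classes and says nothing about their sizes, nevertheless bounds each class by $|T^1|$ via $\hrel^T \subseteq \rrel^T$ and the containment $H \subseteq uT^1$. The only other genuine design choice is the test subsemigroup $\genset{a^2}$ in the infinite-order case, which is engineered precisely so that its complement inside $\genset{a}$ (the odd powers) is a visible infinite transversal of distinct $\rrel^T$-classes.
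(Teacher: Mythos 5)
Your proposal is correct and takes essentially the same route as the paper: the paper's proof also bounds each relative class by the (finite) size of $T^1$ when $T=\langle a\rangle$ is finite, and in the infinite-order case tests the same subsemigroup $\langle a^2\rangle$, whose complement contains all odd powers of $a$. The only cosmetic difference is that the paper derives the contradiction by pigeonholing two odd powers into one $\hrel^{\langle a^2\rangle}$-class, yielding $a^{2k+1}=a^{2n+1}a^{2m}$, whereas you exhibit the strictly decreasing chain of right translates directly --- the same argument read in the contrapositive.
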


\begin{proof}
Take any element $a\in S$ and let $T=\langle a\rangle$.
If $T$ is finite, then $S$ is finite too, since there are finitely many $\rrel^T$-classes and each is bounded 
in size by $T$.
Now suppose that $T$ is infinite. Consider the subsemigroup $T^\prime=\langle a^2\rangle$.
Since $T^\prime$ has finite Green index in $S$, we obtain that $a^{2k+1}\hrel^T a^{2n+1}$ for some
$1\leq k<n$. Then $a^{2k+1}=a^{2n+1}\cdot a^{2m}$ for some $m\geq 0$, a contradiction.
\end{proof}


\section{Global Torsion}
\label{secglobal}

For a semigroup $S$ and $n\in\mathbb{N}$ define $S^n=\{ s_1\cdots s_n\::\: s_1,\dots,s_n\in S\}$.
We say that $S$ has \emph{global torsion} if $S^{n+1}=S^n$ for some $n\in\mathbb{N}$.
It is clear that $S\supseteq S^2\supseteq S^3\supseteq\dots$, and it follows that global torsion is a finiteness condition. 

\begin{thm}\label{globalidem}
Let $S$ be a semigroup and let $T$ be a subsemigroup of $S$ with finite Green index.
If $T$ has global torsion, then $S$ has global torsion as well.
\end{thm}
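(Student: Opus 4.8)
Suppose $T$ has global torsion, so $T^{n+1}=T^n$ for some $n\in\nat$. The plan is to translate this multiplicative finiteness of $T$ into the corresponding statement $S^{N+1}=S^N$ for some large $N$, using the finite Green index hypothesis to control how elements of $S\setminus T$ contribute to long products. As in the earlier sections, I would first reduce to the case where $S$ has an identity $1$ with $1\in T$; note that adjoining an identity does not affect whether a semigroup has global torsion in any essential way (one must be slightly careful, since $S^1$ trivially satisfies $(S^1)^k\ni 1$, so in fact I would argue directly on $S$ rather than passing to $S^1$, or restate the reduction carefully). Let $P+1$ be the Green index, so that there are exactly $P$ relative $\hrel^T$-classes in $S\setminus T$.

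\textbf{Key idea.}
Consider a product $w=s_1s_2\cdots s_m$ of $m$ elements of $S$ with $m$ very large. The obstruction to membership in $S^{m'}$ for smaller $m'$ comes from the factors lying in $F:=S\setminus T$. The essential observation is that a long product with many factors must either contain a long run of consecutive factors whose partial products stay inside $F$, or else repeatedly dip into $T$. I would examine the sequence of prefixes $s_1, s_1s_2, s_1s_2s_3,\dots$ and the sequence of suffixes, and use the finiteness of the number of $\rrel^T$- and $\lrel^T$-classes in $F$ together with a pigeonhole argument to find two prefixes $s_1\cdots s_i$ and $s_1\cdots s_j$ ($i<j$) that are $\rrel^T$-related in $S\setminus T$, hence there is $t\in T^1$ with $s_1\cdots s_i = s_1\cdots s_j\, t$. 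This lets me \emph{shorten} the product while keeping it equal in $S$, replacing a block of factors by a single element of $T^1$. Iterating, any sufficiently long product can be rewritten as a product in which all but a bounded number of factors lie in $T$, and the $T$-factors can then be collapsed using $T^{n+1}=T^n$.

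\textbf{Quantitative bound.}
To make this precise I would fix a threshold $N$ large compared with $n$ and $P$. Given any product of length $>N$, the pigeonhole principle applied to the prefix classes (there being at most $P$ possible $\rrel^T$-classes in $F$, plus the single possibility of landing in $T$) guarantees that after at most $P+1$ steps a prefix enters $T$ or repeats an $\rrel^T$-class; in either case the product reduces to one whose last $T^1$-factor absorbs a block. By organizing the product as (bounded $F$-prefix)$\cdot$(long middle in $T$)$\cdot$(bounded $F$-suffix) and applying $T^{n+1}=T^n$ to the middle, I would obtain $S^{m}=S^{m-1}$ for all $m$ exceeding an explicit bound depending only on $n$ and $P$. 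Hence $S$ has global torsion.

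\textbf{Main obstacle.}
The delicate point is that the reduction $s_1\cdots s_i=s_1\cdots s_j\,t$ only rewrites a \emph{prefix}, and the rewritten factor $t\in T^1$ may equal $1$, so shortening must genuinely decrease the number of non-identity factors rather than merely relabel them; I must therefore argue that the combinatorial configuration of $F$-factors cannot persist indefinitely, i.e. that a product of length exceeding the chosen bound \emph{always} admits such a strict shortening. Handling the two-sided nature of $T^{n+1}=T^n$ (the collapsing is about total length, whereas the Green index gives one-sided $\rrel^T$/$\lrel^T$ control) is where the argument requires the most care, and I expect the cleanest route is to bound separately the number of $F$-factors that can appear in a rewritten minimal-length representative, then invoke global torsion of $T$ on the complementary $T$-part.
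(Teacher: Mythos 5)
Your proposal contains the right combinatorial ingredients (pigeonhole on partial products, the dichotomy ``long run outside $T$ versus dips into $T$'', and the correct refusal to pass to $S^1$, which would trivialise global torsion), but it applies them in the wrong direction, and this is a genuine gap rather than a presentational issue. Since $S\supseteq S^2\supseteq S^3\supseteq\cdots$ always holds, the entire content of global torsion for $S$ is the inclusion $S^N\subseteq S^{N+1}$: every product of $N$ factors must be rewritten as a product of \emph{more} factors. Shortening therefore proves only the trivial inclusion. In fact, the repetition you found already finishes the proof in the case where you invoke it: if two partial products satisfy $s_1\cdots s_i\,\rrel^T\, s_1\cdots s_j$ with $i<j$, both in $S\setminus T$, then the very equation you wrote, $s_1\cdots s_i=(s_1\cdots s_j)t$ with $t\in T^1$, substituted into the full product gives
\[
s_1\cdots s_N=(s_1\cdots s_j)\,t\,(s_{i+1}\cdots s_N),
\]
a product of at least $j+(N-i)\geq N+1$ factors of $S$, so $s_1\cdots s_N\in S^{N+1}$ and you are done outright --- no iteration, no normal form. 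This is exactly the content of Lemma~\ref{neat} in the paper: for any $r+1$ consecutive factors ($r$ being the number of $\hrel^T$-classes in $S\setminus T$), either their product lies in $S^{r+2}$, or some initial segment of them lies in $T$.

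The unfixed gap is in the other branch. Your plan is to reach a normal form (bounded $F$-prefix)$\cdot$(long middle in $T$)$\cdot$(bounded $F$-suffix) and then ``apply $T^{n+1}=T^n$ to the middle''. For that application to yield the extra factor you need, the middle must lie in $T^k$ for some $k\geq n$, i.e.\ it must be a product of at least $n$ consecutive factors \emph{each belonging to $T$}. Nothing in your process delivers this: prefix pigeonhole and shortening control only where the \emph{prefixes} $u_1\cdots u_i$ lie, and each shortening inserts a single isolated element of $T^1$. Knowing $u_1\cdots u_i\in T$ and $u_1\cdots u_j\in T$ does not imply $u_{i+1}\cdots u_j\in T$, and every individual factor of the original product may lie in $S\setminus T$, so the ``long middle in $T$'' is never established. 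The paper's proof avoids this by re-anchoring the pigeonhole: it examines windows of $r+1$ \emph{consecutive} factors, each new window starting immediately after the last completed block, so that when a partial product of a window lands in $T$ it is a genuine consecutive block lying in $T$. With $n=(r+1)m$, where $T^{m+1}=T^m$, either some window expands (first branch), or one chains $m$ consecutive $T$-blocks covering $j_m\leq n$ positions, whence $s_1\cdots s_{j_m}\in T^m=T^{m+1}=\cdots=T^{j_m+1}\subseteq S^{j_m+1}$, and this expansion supplies the extra factor. To repair your argument you should discard the shortening step entirely, use the repetition to lengthen, and in the dipping case build consecutive $T$-blocks rather than $T$-prefixes.
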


\begin{proof}
Let $m\in \mathbb{N}$ be such that $T^{m+1}=T^m$, and let $r$ be the number of $\hrel^T$-classes in $S\setminus T$. We begin by proving the following:

\begin{lemma} \label{neat}
For any $s_1,\ldots, s_{r+1}\in S$ we either have $s_1\cdots s_{r+1}\in S^{r+2}$ or else $s_1\cdots s_i\in T$ for some $i\in \{ 1,\ldots,r+1\}$.
\end{lemma}

\begin{proof}
Assume that $s_1\cdots s_i\in S\setminus T$ for all $1\leq i\leq r+1$. Then there exist $1\leq i<j\leq r+1$ such that $s_1\cdots s_i\hrel^Ts_1\cdots s_j$. Hence there exists $t\in T^1$ such that
$$s_1\cdots s_i=(s_1\cdots s_i)(s_{i+1}\cdots s_jt)\in S^{i+1}.$$
Hence
\[
s_1\dots s_{r+1}=(s_1\dots s_i)s_{i+1}\dots s_{r+1}
\in S^{i+1} s_{i+1}\dots s_{r+1}\subseteq S^{r+2},
\]
as required.
\end{proof}

Resuming the proof of the theorem, let $n=(r+1)m$. We claim that
$S^{n+1}=S^n$. Clearly $S^{n+1}\subseteq S^n$. Let $s_1,\ldots,
s_n\in S$ be arbitrary, so that $s_1\cdots s_n$ is a typical element
of $S^n$. If for any $i\in\{1,\ldots, n-r\}$ we have
$s_is_{i+1}\cdots s_{i+r}\in S^{r+2}$ then we also have
$s_1s_2\ldots s_n\in S^{n+1}$ and the proof is finished. The
alternative is, by Lemma \ref{neat}, that for every $i\in
\{1,\ldots,n-r\}$ there exists $j\in\{0,\ldots,r\}$ such that
$s_is_{i+1}\cdots s_{i+j}\in T$. In particular, there exist $j_1 < j_2 <\dots<j_m$ belonging to
$\{1,\dots,n\}$ such that $j_{k+1}-j_k\leq r+1$ for all $k=1,\dots,m-1$ and
$$
s_1s_2\cdots s_{j_1}, s_{j_1+1}s_{j_1+2}\cdots s_{j_2},\ldots, s_{j_{m-1}+1}s_{j_{m-1}+2}\ldots s_{j_m} \in T.
$$
But then
$$
s_1s_2\cdots s_{j_m}\in T^m= T^{m+1}=\dots =T^{j_m}=T^{j_{m}+1}\subseteq S^{j_{m}+1}.
$$
Abbreviating $j_m=j$, we now have
\[
s_1\dots s_n=(s_1\dots s_{j})(s_{j+1}\dots s_n)\in S^{j+1} s_{j+1}\dots s_n\subseteq S^{n+1},
\]
completing the proof.
\end{proof}

\begin{remark}
The converse of Theorem \ref{globalidem} does not hold: adjoining an identity element to an arbitrary semigroup $T$ yields a semigroup $S$ such that $S^2=S$.
\end{remark}

\section{Eventual regularity}
\label{secpireg}

We close the paper by discussing one more important finiteness condition, this time not related to ideals.

\begin{definition}
A semigroup $S$ is \emph{eventually regular} if for every $s \in S$ there exists $n
\in \mathbb{N}$ such that $s^n$ is a regular element of $S$.
\end{definition}
The class of eventually regular semigroups (also called $\pi$-regular) was introduced by Edwards in \cite{Edwards1983}. Further results on these semigroups include \cite{Auinger1996, Easdown1984, Higgins1993}. 
Clearly every finite semigroup is eventually regular, i.e. eventual regularity is
a finiteness condition.

\begin{thm}
\label{pireg}
Let $S$ be a semigroup and let $T$ be a subsemigroup with finite Green
index. Then $S$ is eventually regular if and only if $T$ is eventually regular.
\end{thm}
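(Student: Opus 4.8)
The plan is to prove both directions by a pattern that has recurred throughout the paper: to understand a product or power that persistently lands outside $T$, one uses finite Green index to collapse it via an $\hrel^T$-relation, thereby exhibiting a nearby power that does lie in $T$, and then transfers regularity between $S$ and $T$. As usual I would assume $1 \in T$.

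For the direction ($T$ eventually regular $\Rightarrow$ $S$ eventually regular), I would take an arbitrary $s \in S$ and track the powers $s^k$. If infinitely many of them lie in $S \setminus T$, then since there are only finitely many $\hrel^T$-classes there, two of them satisfy $s^i \hrel^T s^j$ with $i<j$; this yields $t \in T$ with $s^i = s^j t$, and hence $s^i = s^i \cdot (s^{j-i} t)^k$ for all $k$, so that $s^i$ sits inside a subgroup of $S$ (Proposition~\ref{basicproperties}(ii)) and is therefore regular in $S$. Otherwise all sufficiently large powers $s^k$ lie in $T$; fixing such a power $u = s^k \in T$, eventual regularity of $T$ gives an $n$ with $u^n$ regular \emph{in} $T$, and regularity in a subsemigroup is inherited upward, so $s^{kn}$ is regular in $S$. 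Either way some power of $s$ is regular in $S$.

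For the converse ($S$ eventually regular $\Rightarrow$ $T$ eventually regular), take $s \in T$. By eventual regularity of $S$ there is $m$ with $s^m$ regular in $S$, say $s^m = s^m v s^m$ for some $v \in S$. The difficulty is that $v$ need not lie in $T$, so $s^m$ need not be regular in $T$. The plan is to replace the raw inverse $v$ by an element of $T$ using finite Green index. Writing $e = s^m v \in S$, one has $e = e^2$ and $s^m = e s^m$, so $s^m$ lies in an $\hrel^T$-class that meets $T$; I would look at the powers of the would-be inverse data — concretely the elements $s^m (v s^m)^k$ or $(v s^m)^k$ — and argue that if infinitely many leave $T$ then an $\hrel^T$-collapse produces $t \in T$ with $s^m (v s^m)^k = s^m (v s^m)^{k} t$ for a suitable relation, allowing $v$ to be absorbed into a $T$-element; and if only finitely many leave $T$ then a large enough power of the idempotent $e$ already lies in $T$ and serves directly. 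In both cases the goal is to manufacture $w \in T$ with $s^{N} = s^{N} w s^{N}$ for some $N$ (a multiple of $m$), giving regularity of a power of $s$ in $T$.

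The main obstacle is precisely this converse direction: upward inheritance of regularity is trivial, but \emph{downward} transfer is not, because the generalized inverse supplied by $S$ lives in the larger semigroup. The crux is to show that the idempotent $e = s^m v$ (or one of its powers) can be pulled back into $T$ while still acting as the required left identity on a power of $s^m$, and symmetrically on the right, so that $s^m$ — or a power thereof — is genuinely regular within $T$. I expect the technical heart to be a careful bookkeeping argument, in the style of Lemmas~\ref{lemma1}--\ref{lemma4} and Lemma~\ref{pr:aux}, balancing the left and right factors so that the $\hrel^T$-collapse yields a two-sided inverse in $T$ rather than merely a one-sided relation.
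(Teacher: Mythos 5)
Your first direction follows the paper's strategy and is essentially sound, but the justification of the key step is off: from $s^i \hrel^T s^j$ you use only the $\rrel^T$ half ($s^i = s^j t$), and the identity $s^i = s^i(s^{j-i}t)^k$ by itself cannot place $s^i$ in a subgroup --- in any monoid with $1 \in T$ every element satisfies $x = x\cdot 1^k$. You need both halves of the $\hrel^T$-relation: writing $s^i = s^jt_1 = t_2s^j$ with $t_1,t_2 \in T^1$ and iterating gives $s^i = s^{i+k(j-i)}t_1^k = t_2^k s^{i+k(j-i)}$, whence $s^i \rrel^S s^{2i}$ and $s^i \lrel^S s^{2i}$, so $H_{s^i}^S$ is a group by the classical Green theorem (Proposition~\ref{basicproperties}(ii) with $T=S$); alternatively, as the paper does, use the congruence properties of $\rrel^T$ and $\lrel^T$ to find a power $s^{n+z}$ with $(s^{n+z})^2 \hrel^T s^{n+z}$ and apply Proposition~\ref{basicproperties}(ii) directly. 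Either repair is routine.

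The converse direction, however, has a genuine gap, and the plan as written would fail. You fix a single regular power $s^m$ with inverse $v$ and propose to track the ``powers'' $(vs^m)^k$ and $s^m(vs^m)^k$; but $vs^m$ is an idempotent (since $vs^mvs^m = v(s^mvs^m) = vs^m$) and $s^m(vs^m)^k = s^m$, so both sequences are constant. Your dichotomy therefore degenerates: ``infinitely many leave $T$'' just means $vs^m \notin T$, and ``a large enough power of $e$ lies in $T$'' just means $e \in T$ --- which you cannot force, and which in any case does not by itself yield an inverse of $s^m$ inside $T$. More fundamentally, finite Green index can only be exploited against an infinite family of elements of $S\setminus T$, and a single inverse provides no such family. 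The paper's key idea, absent from your plan, is to apply eventual regularity of $S$ repeatedly to obtain infinitely many exponents $i \in I$ with $t^i$ regular in $S$, with inverses $s_i$. If no $s_i$ lies in $T$, finite Green index places infinitely many $s_i$ in a single $\hrel^T$-class; the idempotents $f_i = t^is_i$ then force the corresponding powers $t^i$ into a single $\hrel^S$-class; since each $\hrel^S$-class is a union of finitely many $\hrel^T$-classes \cite[Proposition~10]{Gray}, two powers satisfy $t^p \hrel^T t^q$; and the power trick from the first direction then gives $(t^y)^2 \hrel^T t^y$ for some $y$. This relative class is a group and, because relative classes respect $T$, it lies wholly inside $T$, so $t^y$ is regular \emph{in} $T$. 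It is exactly this last point --- the group must be an $\hrel^T$-class rather than an $\hrel^S$-class, so that it sits inside $T$ --- that your ``absorb $v$ into a $T$-element'' strategy has no mechanism to reach.
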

\begin{proof}
Suppose that $T$ is eventually regular and let $s \in S$ be arbitrary. If
$s^m \in T$ for some $m \in \mathbb{N}$, then, since $T$ is eventually regular,
$(s^m)^n = s^{mn}$ is regular in $T$ (and hence also in $S$) for some $n
\in \mathbb{N}$. Otherwise $s^m \not\in T$ for all $m$ and since $T$ has
finite Green index in $S$ there exist $n,r \in \mathbb{N}$ with $s^{n+r}
\hrel^T s^n$.  Then  proof of \cite[Theorem~18]{Gray} choosing $z \in \mathbb{N}$ with $0 \leq z \leq
r-1$ and $n + z \equiv 0 \pmod{r}$ we have
$
(s^{n+z})^2 \hrel^T s^{n+z}.
$
By Proposition \ref{basicproperties} (ii) we have that the relative $\hrel^T$ class of $s^{n+z}$ is a group,
and hence $s^{n+z}$ is a regular element.

For the converse, suppose that $S$ is eventually regular and let $t \in T$.
Since $S$ is eventually regular there exists an infinite subset $I \subseteq
\mathbb{N}$ such that  $t^{i}$ is regular in $S$ for all $i \in I$.
For each $i \in I$ let $s_i$ be an inverse of $t^{i}$ in $S$, so
\begin{equation}
t^{i} s_{i} t^{i} = t^{i} ,\quad s_{i} t^{i} s_i = s_i.
\label{plus}
\end{equation}
If $s_i \in T$ for some $i \in I$ then $t^{i}$ is regular in $T$ and we
are done, so suppose otherwise.
For all $i \in I$, set $f_i = t^{i} s_i$ noting that by \eqref{plus},
$f_i$ is an idempotent satisfying $f_i \rrel^S t^{i}$ and $f_i
\lrel^S s_i$.
Since  $s_i \in S \setminus T$ for all $i \in I$, and $T$ has finite
Green index in $S$, it follows that there is an infinite subset $J
\subseteq I$ such that for all $i,j \in J$ we have $s_i \hrel^T
s_j$. Let $i,j \in J$ be arbitrary, with $i < j$ say. Then
\[
f_i \lrel^S s_i \lrel^S s_j \lrel^S f_j
\]
and therefore $f_i f_j = f_i$. Since $\rrel$ on $S$ is a left
congruence, $ t^j  \rrel^S f_j $  implies $f_i t^j  \rrel^S
f_i f_j$ and hence
\[
t^j = t^i t^{j-i} = t^{i} s_{i} t^{i} t^{j-i} = (t^{i} s_{i}) t^{j} =
f_i t^j  \rrel^S f_i f_j   = f_i  \rrel^S t^i.
\]
By a dual argument $t^j \lrel^S t^i$ and hence $t^j \hrel^S
t^i$.

Since $i,j \in J$ were arbitrary it follows that $t^k \hrel^S t^l$
for all $k,l \in J$.
By \cite[Proposition~10]{Gray} each $\hrel^S$-class of
$S$ is a union of finitely many $\hrel^T$-classes. Since $J$ is
infinite it follows that there exist distinct $p,q \in J$ with $t^p \hrel^T
t^q$. Now as in the proof of the converse above we can find a number $y
\in \mathbb{N}$ with $(t^y)^2 \hrel^T t^y$, and we conclude that
$t^y$ is a regular element of $T$.   \end{proof}


\begin{flushleft}
R. Gray\\
Centro de \'{A}lgebra da Universidade de Lisboa \\ 
Av. Prof. Gama Pinto 2  \\
1649-003 Lisboa,  Portugal\\
\smallskip
\texttt{rdgray@fc.ul.pt}\\
\bigskip
V. Maltcev, J.D. Mitchell, N. Ru\v{s}kuc\\
School of Mathematics and Statistics\\
University of St Andrews\\
St Andrews KY16 9SS\\
Scotland, U.K.\\
\smallskip
\texttt{\{victor,jamesm,nik\}@mcs.st-and.ac.uk}
\end{flushleft}

\end{document}